\newtheorem*{theorem*}{Theorem}
\newtheorem{teo}{Theorem}[section]
\newtheorem{prop}[teo]{Proposition}
\newtheorem{lem}[teo]{Lemma}
\newtheorem{coro}[teo]{Corollary}
\theoremstyle{definition}
\newtheorem{defi}[teo]{Definition}
\newtheorem{rem}[teo]{Remark}
\def\gl{\textrm{GL}}
\def\dm{\textrm{Diff}(M)}
\def\ds1{\textrm{Diff}(S^1)}
\def\h{\mathcal H}
\def\kh{{\mathcal K}(\mathcal H)}
\DeclareMathOperator\ad{ad}
\DeclareMathOperator\Ad{Ad}
\DeclareMathOperator\lie{Lie}
\DeclareMathOperator\re{Re}
\DeclareMathOperator\im{Im}
\DeclareMathOperator\LG{Length_{\mu}}
\DeclareMathOperator\Le{Length}
\DeclareMathOperator\di{dist}
\DeclareMathOperator\dg{dist_{\mu}}
\DeclareMathOperator\bh{\mathcal L(H)}
\DeclareMathOperator\tr{Tr}
\DeclareMathOperator\difm{Diff(M)}
\DeclareMathOperator\Exp{Exp}
\begin{document}

\title{\vspace*{-2cm}The metric geometry of infinite dimensional  Lie \newline groups and their homogeneous spaces\footnote{2010 MSC Primary 22E65, 58B20; Secondary 53C22, 47B10, 58D05}.}
\date{}
\author{Gabriel Larotonda\footnote{Supported by CONICET and ANPCyT, Argentina},\footnote{Instituto Argentino de Matemática "Alberto P. Calderón" and Facultad de Ciencias Exactas y Naturales, Universidad de Buenos Aires. e-mail: glaroton@dm.uba.ar}}

\maketitle

\abstract{\footnotesize{\noindent We study the geometry of Lie groups $G$ with a continuous Finsler metric, assuming the existence of a subgroup $K$ such that the metric is right-invariant for the action of $K$. We present a systematic study of the metric and geodesic structure of homogeneous spaces $M$ obtained by the quotient $M\simeq G/K$. Of particular interest are left-invariant metrics of $G$ which are then bi-invariant for the action of $K$. We then focus on the geodesic structure of groups $K$ that admit bi-invariant metrics, proving that one-parameter groups are short paths for those metrics, and characterizing all other short paths. We provide applications of the results obtained, in two settings: manifolds of Banach space linear operators, and groups of maps from compact manifolds. \footnote{{\bf Keywords and phrases:} Lie group, Banach space, Finsler metric, homogeneous space, quotient metric, bi-invariant metric, geodesic, one-parameter group, diffeomorphism group, loop group, operator algebra, operator ideal, unitary group}}}

\setlength{\parindent}{0cm} 


\section{Introduction}

The purpose of this paper is the study of the geometry of Lie groups $G$ with a continuous Finsler metric, assuming the existence of a subgroup $K$ such that the metric is right-invariant  for the action of $K$. In the first part we are concerned with the metric and geodesic structure of homogeneous spaces $M$ obtained by the quotient $M\simeq G/K$; we prove a structural result concerning the descent of the metric to the quotient, showing that a path $\gamma\subset G$ is short when pushed to the quotient if and only if $\gamma$ realizes distance among fibers in $G$ (Theorem \ref{elcorodelq}). This theorem contains, as particular instances, many results of the Finsler and Riemannian geometry of homogeneous spaces in finite and infinite dimensions: for instance geometries of the group of positive invertible operators, and Bures' metric in state space \cite{acl,bjl,cm,dittmann}. Of particular interest are left-invariant metrics of $G$ which are then bi-invariant for the action of $K$. In the second part we focus on the geodesic structure of groups $K$ that admit bi-invariant metrics, proving that one-parameter groups are short paths for those metrics in locally exponential lie groups, and characterizing all short paths (Theorem \ref{nonstrict}), using the faces of the unit ball in $\lie(K)$. This theorem has applications to several settings, and provides a unified treatment of known results from the Finsler geometry, both in finite and infinite dimension \cite{alhr,alr,alv,atkin1}. The third part is concerned with examples and applications of these theorems.

\smallskip 

Since we want to give a systematic framework to the theory of intrinsic distances derived from tangent metrics, in the setting of Lie groups and their homogeneous spaces,  we will discuss a bit further this circle of ideas in this introduction. The emphasis is put in infinite-dimensional Lie groups and manifolds, modeled by locally convex topological vector spaces, and metrics defined as continuous distributions of tangent Finsler norms or semi-norms (all the metrics considered in this paper are  non-negative); our metrics are only continuous and not necessarily smooth. There are technical and fundamental differences with the classical theory of metrics on Lie groups, coming from two directions: the first one is going from finite dimensional manifolds to infinite dimensional manifolds, and the second one is going from Riemannian (or even classical Finsler metrics, which are smooth and have an auxiliary Riemannian metric given in terms of the Hessian of the metric) to only continuous tangent metrics. However, it is more often than not that both difficulties arise simultaneously, since infinite dimensional vector spaces come with intrinsic and interesting continuous norms or semi-norms.

\smallskip

Many of the ideas proposed here can be extended to principal $G$  bundles, but we preferred to keep the exposition in the realm of homogeneous spaces for clarity. Let us recall now some standard results of the classical theory of Riemannian homogeneous spaces, to put in context what we are trying to achieve here. Let $\pi:G\times M\to M$ be a smooth action of a finite dimensional Lie group $G$ in a smooth finite dimensional manifold $M$, and let $K$ be the stabilizer of the action at $p\in M$ (i.e. $K$ is the isotropy group of $\pi_p=\pi(\cdot,p)$). Denoting $\pi(u,p)=u\cdot p$ for $u\in G$, then $u\lie(K)\subset T_uG\simeq u\lie(G)$ is the stabilizer of $u\cdot p\in M$. By choosing in $G$ a Riemannian metric $(G,g_0)$ \textit{which is right-invariant for the action of $K$}, one obtains a reductive connection, decomposing, for each $u\in G$, $
T_uG\simeq u\lie(K)\oplus \mathfrak h_u$ where $\mathfrak h_u$ is the orthogonal of $u\lie(K)$, called the \textit{horizontal distribution}. Thus for $V\in T_{u\cdot p}M$, denoting $\kappa_{u\cdot p}(V)\in \mathfrak h_g$ the unique \textit{horizontal lift} of $V$, one obtains a smooth section $\kappa:TM\to \mathfrak h\subset TG$, and if $X$ is a smooth vector field $M$, we denote $X^H=\kappa\circ X$ the smooth horizontal lift of $X$ in $G$. The distribution $\Ad_G\lie(K)=\ker \pi_*$ is also known as the \textit{vertical space}, and vectors in this kernel are called \textit{vertical vectors}.

\smallskip

If $(M,g)$ is a Riemannian metric in $M$, and we fix $p\in M$, the map $\pi=\pi_p:G\to M$ is a \textit{Riemannian submersion} if it is a submersion and its differential is isometric restricted to the horizontal distribution. Reciprocally, we can endow $M$ with a Riemannian metric $g$ that makes of $\pi$ a Riemannian submersion by declaring that $\pi_*u:T_uG\to T_{u\cdot p}M$  are isometric for all $u\in G$. That is, if $V=\pi_{*u}v,W=\pi_{*u}v\in T_{u\cdot p}M$, then $\langle V,W\rangle_{u\cdot p}=\langle V^H,W^H\rangle_u\quad \forall\, u\in G$. Note that this can be restated, using the orthogonality of the distribution, as
\begin{equation}\label{qme}
\|V\|_u=\inf\{ \|v-uz\|_u: z\in \lie(K)\}.
\end{equation}
For $\pi: (G,g_0)\to (M,g)$ a Riemannian submersion, let $X,Y$ be smooth vector fields in $(M,g)$, and let $\Gamma$ be a geodesic of $(G,g_0)$. The folllowing well-know results \cite[2.90, 3.55,2.109]{gallot} are at the core of our (non-Riemannian) point of view:
\begin{itemize}
\item[-] If $\nabla^0$ is the Levi-Civita connection of $(G,g_0)$ and $\nabla$ is the Levi-Civita connection of $(M,g)$, then for all $u\in G$ we have $
(\nabla_XY)_{\pi(u)}=\pi_{*u}(\nabla^0_{X^H}Y^H)$. 
\item[-] If $\Gamma'(0)$ is horizontal, then $\Gamma'(t)$ is horizontal for all $t$, and $\gamma=\pi(\Gamma)$ is a geodesic of $(M,g)$ with the same length. Conversely, if $\gamma$ is a geodesic of $(M,g)$ with $\gamma(0)=\pi(u)$, there exists a unique horizontal lift $\Gamma$ of $\gamma$ such that $\Gamma(0)=u$ and $\Gamma$ is a geodesic of $(G,g_0)$.
\item[-] When the metric $g_0$ on $G$ is bi-invariant, it is also well-known that geodesics $\delta$ of $(G,g_0)$ are left-translations of one-parameter groups: $\delta(t)=ue^{tv}$.
\end{itemize}
In particular, the Riemannian exponential maps are thus related by $\Exp^M(V)=\pi\circ\Exp^G(V^H)$, and by the Hopf-Rinow theorem, if $(G,g_0)$ is complete, then $(M,g)$ is complete. For bi-invariant metrics geodesics of $(M,g)$ are then given by $t\mapsto \pi(ue^{tz})$ where $z\in \lie(G)$ is a horizontal vector, and as a consequence $(M,d_g)$ is a complete metric space.

\medskip

In the presentation and results just mentioned for Riemannian manifolds, there are several assertions that fail for non-Riemannian metrics; they fail for Finsler norms that are not Riemannian, they fail for Riemannian metrics that are only continuous with respect to the topology of the locally convex spaces $E$ modeling the manifolds involved, and they even fail for Riemann-Hilbert metrics on infinite dimensional manifolds. Without being exhaustive, let us mention only that if the metric is non Riemannian then the horizontal distribution $\mathfrak k\subset TG$ is not given, therefore the section $\kappa:TM\to TG$ and the horizontal lifts $X^H$ of fields $X$ in $M$ are not given; the first item the short list above is lost, and it is unclear if there is a replacement for the Riemannian exponential. The theorem of Hopf-Rinow is false in infinite dimension (even for Riemann-Hilbert metrics, see McAlpin \cite{mcalpin}), hence the metric properties of the spaces involved cannot be derived from the extendability of geodesics. 

\smallskip 

This is also a major problem when the metric is not Riemannian, and there is no  connection or distribution: which is the right notion of geodesic, in the absence of Euler's equations? We have taken the approach of the metric geometry (or length spaces, see Burago-Burago and Ivanov \cite{bbi}): a path in a manifold is a geodesic if it is minimizing, that is if its length equals the distance among the endpoints. The distance, of course, is defined as the infima of the lengths of paths joining given endpoints (usually called the \textit{rectifiable distance}). Note that the length of a path is a notion that makes sense for any continuous distribution of tangent norms or semi-norms in the given manifold.

\medskip

Let us describe now discuss here, with a bit more detal, the contents of this paper. After giving the necessary context and definitions for manifolds and Lie groups modeled by locally convex topological vector spaces, we examine in Section \ref{intro1} of this paper the continuity of the group operations in $G$, for the topology defined by the rectifiable distance, and assuming the group has a smooth exponential, we also derive some norm inequalities for it. Regarding the action of $G$ on a manifold $M$, there is a natural tangent norm for $M$ given by (\ref{qme}) which we call the \textit{quotient metric}, and we show in Section \ref{intro1} of this paper that it is well-defined in this ample context, provided the tangent norms in $G$ are right-invariant for the action of the isotropy group $K$. Thus one is faced with two distances in $M$, the one given by the infima of lengths of paths in $M$ (measured with the quotient metric), and the one given by the distance in $G$ among the fibers $gK,hK$. It is shown (in Section \ref{hs}) that both distances agree, as in the Riemannian setting; this opens several applications to Finsler norms that will be dealt with in Section \ref{examples}. At the end of Section \ref{hs}, we discuss analogues of the second Riemannian property mentioned above, regarding geodesics of $G$ and geodesics of $M$. The main results of Section \ref{hs} are Theorem \ref{elcorodelq} and Theorem \ref{arribajo2}. If a group $G$ admits a bi-invariant metric, and the metric is not Riemannian, it is also expected that one-parameter groups will be short paths for the rectifiable distance, and this is proved in Section \ref{biinvme} of the paper. We also prove that when the norm is strictly convex, those are the unique possible short paths. For the case of non-strictly convex norms, we give a nice characterization of all short paths in the group. The main results of that section are then Theorem \ref{mini}, Theorem \ref{uniestricta} and Theorem \ref{nonstrict}. We illustrate the use of all these ideas in the final section of the paper (Section \ref{examples}), where we take a look at some known results under the point of view presented here, and we obtain new results regarding distances and geodesics in several contexts, including groups of maps from compact manifolds (diffeomorphism groups, loop groups) and groups of Banach space operators and their homogeneous spaces (invertible operators, isometries, unitary operators, positive operators).

\section{Lie groups and rectifiable metrics}\label{intro1}

Let us present in this section some general definitions and considerations that will be used throughout the paper. Manifolds in this paper will be modeled with charts in a Hausdorff locally convex topological vector space (shortly l.c.s.). The differential of a map $f:M\to N$ among smooth manifolds will be denoted by $f_*:TM\to TN$ and its specialization by $f_{*p}:T_pM\to T_{f(p)}N$, $p\in M$.

\medskip 

In this paper, a \textit{Lie group} $G$ is a manifold such that the operation $(x,y)\mapsto xy^{-1}$ is smooth (at least $C^2$) as a map $G\times G\to G$. If $g\in G$ and $L_g:h\mapsto gh$ denotes the left multiplication in $G$, with some abuse of notation we denote
$$
gv=L_gv=(L_g)_{*h}v\in T_{gh}G
$$
for $h\in G$, $v\in T_hG$. We denote $1\in G$ the identity of the group and $\lie(G)=T_1G$  its Lie algebra. The Lie bracket in $\lie(G)$ will be denoted by $[\cdot,\cdot]$: it is always a bi-linear, anti-symmetric and continuous map. If $c_g(h)=ghg^{-1}$ is the conjugation automorphism, i.e. $c_g=L_gR_g^{-1}$ for $g\in G$, we follow the standard notation $\Ad_g=(c_g)_{*1}$ with $\Ad:G\to  \gl(\lie(G))$ a group homomorphism.

\begin{rem}\label{noad}
If $\lie(G)$ is not a Banach space then $\gl(\lie(G))$ is not necessarily a Lie group, but it is a subgroup of the space of diffeomorphisms of $\lie(G)$ therefore there is a natural notion of smoothness. We denote $\ad=(\Ad)_{*1}:\lie(G)\to  \mathcal L(\lie(G))$ which is a linear Lie algebra morphism, and in fact $\ad(v)(w)=[v,w]$ for any $v,w\in \lie(G)$ (see Neeb \cite[Section II.3]{neeb}). 
\end{rem}

\subsection{Finsler metrics}

In this section we define continuous Finsler metrics over smooth manifolds. For a more radical approach that drops the smoothness assumption on the manifolds, see Andreev \cite{andreev}. See also Berestovskii \cite{beres,beres2} for a systematic account of finite dimensional homogeneous manifolds with Finsler metrics defined by distributions in the fiber bundle.

\begin{defi}[Finsler norms and semi-norms]\label{msn}
Let $E$ be a l.c.s., $\mu=|\cdot|:E\to\mathbb R_{\ge 0}$ a continuous function.
\begin{enumerate}
\item $\mu$ is a \textit{Finsler semi-norm} if it is sub-additive and positively homogeneous:  $|v+w|\le |v|+|w|$ and $|\lambda v|=\lambda |v|$ for $v,w\in E$ and $\lambda\in \mathbb R_{\ge 0}$. 
\item $\mu$ is a \textit{non-degenerate Finsler norm} if $|v|=0$ implies $v=0$.
\end{enumerate}
If  $|tv|=|t|\,|v|$ for all $t\in\mathbb R$, we obtain the standard notion of continuous vector space semi-norm (if moreover it is non-degenerate, we have a vector space norm).
\end{defi}

\begin{defi}[Finsler metrics for $TM$]
Let $M$ be a manifold modeled by a l.c.s $E$. Let $\mu:TM\to \mathbb R_{\ge 0}$ be a selection of a tangent Finsler semi-norm $\mu_p=|\cdot|_p:T_pM\to \mathbb R_{\ge 0}$, for each $p\in M$. 
\begin{enumerate}
\item $\mu$ is \textit{continuous along paths} if for each $C^1$ path $\gamma:[a,b]\to M$ the map $t\mapsto |\dot{\gamma}_t|_{\gamma_t}$ is continuous.
\item $\mu$ is continuous if $\mu:TM\to \mathbb R$ is a continuous map. 
\end{enumerate}
\end{defi}

\noindent$\S$ A word of caution: as discussed in the introduction, our definition of Finsler metric is far more general than the standard one; we are not assuming smoothness, therefore the usual machinery of Riemann-Finsler geometry \cite{bao} is not at hand.

\begin{defi}[Uniform Finsler metrics for $TG$]\label{sc}
Let $G$ be a Lie group, $\mu:TG\to\mathbb R_{\ge 0}$ a continuous Finsler metric. For each $g,h\in G$, consider the linear operators 
$$
(L_g)_{*h},\, (R_g)_{*h} :(T_hG,|\cdot|_h)\to (T_{gh}G,|\cdot|_{gh}),
$$
denote $|(L_g)_{*h}|=\sup\{ |gv|_{gh}: v\in T_hG, |v|_h=1\}$ and likewise $|(R_g)_{*h}|$. Note that when $g=1$, both norms are identically $1$ for all $h\in G$.

\smallskip

We say that $\mu$ is \textit{$L$-uniform} if there exists a $\tau_G$ upper semi-continuous function $L:G\to\mathbb R_{\ge 0}$ with $L(1_G)=1$ such that $|(L_g)_{*h}|\le L(g)$ for all $g,h\in G$.

\smallskip

Replacing $L$ with $R$ we obtain the definition of \textit{$R$-uniform}. 
\end{defi}

\begin{rem}[Left-invariant metrics]\label{norrma}
If we fix $|\cdot|$ a Finsler semi-norm in $\lie(G)$ and define $|v|_g=|(L_g)_{*1}^{-1}v|$ for $v\in T_gG$, then the group $G$ has a \textit{left-invariant Finsler metric} $|\cdot|_g:T_gG\to \mathbb R_{\ge 0}$, because if $g,h\in G$ then
$$
|hv|_{gh}=|(gh)^{-1}hv|=|g^{-1}v|=|v|_g\quad \textrm{ for }v\in T_gG,
$$
and the map $(g,v)\mapsto |v|_g=|g^{-1}v|$ is continuous as a map from $TG$ to $\mathbb R$. Any left-invariant Finsler metric in $G$ can be obtained with this procedure. 

\smallskip

Note that in this case $|L_gv|_{gh}=|h^{-1}g^{-1}gv|_1=|h^{-1}v|_1=|v|_h$ therefore automatically $|(L_g)_{*h}|=1$ for all $g,h\in G$. Note also that $|\Ad_gv|_1=|vg^{-1}|_{g^{-1}}\le  R(g^{-1})|v|_1$ when the metric is also $R$-uniform.
\end{rem}

We will present some more non-trivial examples of Finsler metrics that are not right nor left invariant in Section \ref{examples}, in particular Bures' metric for invertible operators of a $C^*$-algebra (Section \ref{bures}), and integral metrics for groups of diffeomorphisms of a compact manifold (Section \ref{grudif}).

\medskip

\begin{defi}[Rectifiable paths and length]\label{recti}
We say that a curve $\alpha:[a,b]\to G$ is \textit{rectifiable} if $\alpha$ is differentiable a.e. in some chart of $G$ and $t\mapsto |\dot{\alpha}(t)|_{\alpha(t)}$ is Lebesgue integrable. It is also possible to consider rectifiable paths in the metric sense (see \cite{andreev} and \cite[Section 2]{menucci}), though we won't be needing such machinery here. 

\smallskip

For piecewise smooth or rectifiable arcs $\alpha:[a,b]\to G$, define the \textit{length} of $\alpha$ as
$$
\LG(\alpha)=\int_a^b |\dot{\alpha}(t)|_{\alpha(t)} dt.
$$
\end{defi}

\begin{defi}
For $g,h\in G$, consider the infima of the lengths of such arcs joining $g,h$ in $G$,
$$
\dg(g,h)=\inf\{ \LG(\alpha):\alpha:[0,1]\to G\textrm{ is rectifiable }, \alpha(0)=g,\alpha(1)=h\}.
$$
Then $\dg:G\times G\to \mathbb R_{\ge 0}$ is a p.s.d. (pseudo-quasi-distance): 
\begin{enumerate}
\item it is finite in each arc-wise connected component of $G$, 
\item it obeys the triangle inequality, 
\item it is reversible if and only if $|\cdot|_g$ is homogeneous (if it is a norm) for each $g\in G$.
\end{enumerate}
More details on asymmetric distances can be found in \cite{mennu2} and the references therein. 
\end{defi}

\medskip

\begin{rem}
The matter of whether $\dg(x,y)=0$ implies $x=y$ in $G$ is more subtle. In fact, even if we start with a norm in the Lie algebra of $G$ ($\mu(v)=0$ implies $v=0$), and consider the left-invariant metric it induces, there are examples when this fails, see Michor and Mumford \cite{mm} for such an example; see also the paper by Clarke \cite{crm}. 
\end{rem}

$\S$ A sufficient condition to obtain the non-degeneracy of $\dg$ (for left-invariant metrics) is given by asking $|\cdot|$ to induce in $\lie(G)$ its original l.c.s topology. In particular, if $G$ is a finite dimensional Lie group, $\dg$ is non-degenerate for any chosen \textit{norm} in $\lie(G)$.

\begin{defi}
We will denote with $(G,\dg)$ the underlying (pseudo-quasi) metric space. Nevertheless, this distance or quasi-distance induces a topology in $G$, and we will refer to the topology induced as $\tau_{\mu}$ when needed; otherwise the topology of $G$ will always be the manifold topology denoted by $\tau_G$. Clearly $\tau_{\mu}$ will be Hausdorff if and only if $\dg$ is non-degenerate. 
\end{defi}

We now turn to the the subject of comparing topologies and the continuity of the group operations for $\tau_{\mu}$.

\begin{prop}[Continuity of the group operations]\label{contigru}
Let $\mu$ be a continuous Finsler metric in $TG$. If $\tau_G$ denotes the original topology of $G$ and $\tau_{\mu}$ the one induced by the (pseudo-quasi) metric, then
\begin{enumerate}
\item If $g_i\to g$ in $\tau_G$, then $g_i\to g$ in $\tau_{\mu}$.
\item $\dg:(G,\tau_G)\times (G,\tau_G)\to \mathbb R$ is continuous, and $\tau_{\mu}$ is finer than $\tau_G$.
\item If $\mu$ is $L$-uniform (resp. $R$), the left (resp. right) multiplication map by $g\in G$ is Lipschitz continuous for $\dg$ with constant $L(g)$ (resp $R(g)$).
\item If $\mu$ is reversible, left-invariant and $R$ uniform (or vice-versa), the product $m:G\times G\to G$, $m(g,h)=gh$  is jointly continuous for $\dg$, and so is  $i:G\to G$, $i(g)=g^{-1}$, the inversion map.
\end{enumerate}
\end{prop}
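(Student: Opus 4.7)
The plan is to handle the four assertions in order, with the inversion continuity in (4) presenting the main technical difficulty.

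For (1), fix $g\in G$ and a chart $\varphi:U\to E$ around $g$. For $g_i\to g$ in $\tau_G$ we eventually have $g_i\in U$, and the chart-straight segment $\alpha_i(t)=\varphi^{-1}\bigl(t\varphi(g_i)+(1-t)\varphi(g)\bigr)$, $t\in[0,1]$, joins $g$ to $g_i$. Since $\dot\alpha_i(t)\to 0$ and $\alpha_i(t)\to g$ uniformly in $t$, continuity of $\mu$ on $TG$ forces $|\dot\alpha_i(t)|_{\alpha_i(t)}\to 0$ uniformly; hence $\LG(\alpha_i)\to 0$ and $\dg(g,g_i)\to 0$, and the estimate $\dg(g_i,g)\to 0$ follows symmetrically. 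Part (2) then reduces to the asymmetric triangle inequality
$$
|\dg(x,y)-\dg(x_0,y_0)|\le \max\bigl\{\dg(x,x_0)+\dg(y_0,y),\ \dg(x_0,x)+\dg(y,y_0)\bigr\},
$$
whose right-hand side vanishes under $\tau_G\times\tau_G$ convergence by (1); the comparison of topologies is then formal.

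For (3), if $\alpha$ is a rectifiable path from $h_1$ to $h_2$, then $L_g\circ\alpha$ joins $gh_1$ to $gh_2$ with derivative $(L_g)_{*\alpha(t)}\dot\alpha(t)$, and the $L$-uniform hypothesis yields
$$
\bigl|(L_g)_{*\alpha(t)}\dot\alpha(t)\bigr|_{g\alpha(t)}\le L(g)\,|\dot\alpha(t)|_{\alpha(t)};
$$
integrating and infimizing over $\alpha$ gives $\dg(gh_1,gh_2)\le L(g)\dg(h_1,h_2)$. For joint continuity of the product in (4), I would triangulate and combine left-invariance (so $L\equiv 1$) with the right-handed analogue of (3):
$$
\dg(g_0h_0,gh) \le \dg(g_0h_0, g_0h) + \dg(g_0h, gh) \le \dg(h_0,h) + R(h)\,\dg(g_0,g).
$$
Upper-semi-continuity of $R$ keeps $R(h)$ locally bounded near $h_0$, and reversibility of $\mu$ handles the opposite direction for the inequality.

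For inversion, given a smooth path $\alpha$ from $g_0$ to $h$, the curve $\beta(t)=\alpha(t)^{-1}$ has derivative $\dot\beta(t)=-\alpha(t)^{-1}\dot\alpha(t)\alpha(t)^{-1}$. Reversibility drops the sign, left-invariance gives $|\alpha^{-1}v|_{\alpha^{-1}}=|v|_1$ for $v\in T_1G$, and $R$-uniformity then yields
$$
|\dot\beta(t)|_{\alpha(t)^{-1}}=|\dot\alpha(t)\alpha(t)^{-1}|_1\le R(\alpha(t)^{-1})\,|\dot\alpha(t)|_{\alpha(t)},
$$
so $\LG(\beta)\le \sup_{t\in[0,1]} R(\alpha(t)^{-1})\cdot\LG(\alpha)$. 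The main obstacle is controlling the factor $R(\alpha(t)^{-1})$ uniformly in $t$: an arbitrary $\dg$-almost-minimizing path from $g_0$ to $h$ need not remain $\tau_G$-close to $g_0$, so upper-semi-continuity of $R$ cannot be invoked naively. I would sidestep this by choosing $\alpha$ to be the chart-straight segment constructed in (1) (available once $h$ is $\tau_G$-close to $g_0$); then $\alpha(t)^{-1}$ is $\tau_G$-close to $g_0^{-1}$ uniformly in $t$, so $R(\alpha(t)^{-1})\le R(g_0^{-1})+\varepsilon$ and $\LG(\beta)\to 0$. The conclusion in the $\dg$-sense then follows by invoking the topology comparison of (2).
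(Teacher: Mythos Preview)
Your arguments for (1)--(3) are essentially the paper's, and correct. The two issues are both in (4), and they stem from the same confusion about which topology controls $R$.

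For the product, your triangulation through $g_0h$ leaves you with the factor $R(h)$ at the \emph{moving} point $h$. Upper semi-continuity of $R$ is only assumed for $\tau_G$, and since you are proving $\dg$-continuity you only know $h\to h_0$ in $\tau_\mu$; item (1) gives the implication $\tau_G\Rightarrow\tau_\mu$, not the reverse, so you cannot bound $R(h)$ this way. The fix is trivial: triangulate through $gh_0$ instead, obtaining $\dg(g_0h_0,gh)\le R(h_0)\,\dg(g_0,g)+\dg(h_0,h)$ with $R(h_0)$ a fixed constant. This is exactly what the paper does (with the roles of fixed and moving points reversed in notation).

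For inversion the gap is more serious and cannot be patched within your approach. Your chart-segment argument shows only that inversion is continuous from $\tau_G$ to $\tau_\mu$: you need $h$ to lie in a $\tau_G$-chart around $g_0$ to even form the segment, and you need $\alpha(t)^{-1}$ to be $\tau_G$-close to $g_0^{-1}$ to invoke upper semi-continuity of $R$. But the claim is $\tau_\mu$-to-$\tau_\mu$ continuity, and invoking (2) cannot close this gap because the topology comparison goes the wrong way (item (1) says $\tau_G$ is at least as fine as $\tau_\mu$, not the other way around). The paper sidesteps the path level entirely: left-invariance gives $\dg(h^{-1},g_0^{-1})=\dg(1,hg_0^{-1})$, and then item (3) applied to right multiplication by $g_0^{-1}$ yields
\[
\dg(1,hg_0^{-1})=\dg(g_0g_0^{-1},hg_0^{-1})\le R(g_0^{-1})\,\dg(g_0,h).
\]
This is a one-line global Lipschitz bound for inversion, with constant $R(g_0^{-1})$ depending only on the base point; no control of $R$ along any path is needed.
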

\begin{proof}
Let $(U,\varphi)$ be a chart around $1\in G$, where $U=U^{-1}$ and $V=\varphi(U)$ is a convex balanced neighborhood of $0\in \lie(G)$. If $G\ni g_i\to g$ in $\tau_G$, then $v_i=\varphi(g^{-1}g_i)\to 0$ in $\lie(G)$. Since left multiplication is $C^1$, the map $\psi:V\times V\to TG$ given by $(v,w)\mapsto (L_g)_{*\varphi^{-1}(v)}(\varphi^{-1})_{*v}(w)$ is continuous. Since $\psi(0,0)=(L_g)_{*1}(\varphi^{-1})_{*0}0=0$, if $\varepsilon>0$ is given, and shrinking $V$ if necessary, we have $|\psi(v,w)|_{g\varphi^{-1}(v)}<\varepsilon$ when $v,w\in V$. Let $\alpha(t)=g\varphi^{-1}(tv_i)$ for $t\in [0,1]$, clearly $\alpha$ joins $g$ to $g_i$ in $G$.  Now $\dot{\alpha}(t)=\psi(tv_i,v_i)$ therefore if $i\ge i_0$ is such that $v_i\in V$, then also $tv_i\in V$ and
$$
\dg(g,g_i)\le \LG(\alpha)=\int_0^1 |\dot{\alpha}|_{\alpha}=\int_0^1 |\psi(tv_i,v_i)|_{g\varphi^{-1}(tv_i)} dt<\varepsilon,
$$
which in turn implies that $\dg(g,g_i)\to 0$ when $g_i\to g$ in $\tau_G$. The second assertion is immediate from the first one. 

Now assume $\mu$ is $L$-uniform, and note that there is a bijection among piecewise smooth maps $\alpha$ joining $h,k$ and those joining $gh,gk$ given by left multiplication $\alpha\mapsto \beta=g\alpha$. Since $\dot{\beta}=(g\alpha)^{\cdot}=(L_g)_{*\alpha}\dot{\alpha}$, we have 
$$
|\dot{\beta}|_{\beta}=|(L_g)_{*\alpha}\dot{\alpha}|_{g\alpha}\le L(g)\,|\dot{\alpha}|_{\alpha}.
$$
Therefore $\LG(\beta)\le L(g) \LG(\alpha)$, which implies $\dg(gh,gk)\le L(g) \dg(h,k)$. The  proof  for $R$-uniform metrics is similar. If the metric is reversible, left-invariant and $R$-uniform, assume $\dg(g_n,g)\to 0$ and $\dg(h_n,h)\to 0$. Then 
\begin{eqnarray}
\dg(g_nh_n,gh) & \le &\dg(g_n h_n,g_n h)+\dg(g_n h,gh)\nonumber\\
&\le & \dg(h_n,h)+R(h)\dg(g_n,g)\to 0\nonumber
\end{eqnarray}
therefore the product is jointly continuous. Likewise,
$$
\dg(g_n^{-1},g^{-1})\le R(g^{-1}) \dg(g,g_n)\to 0.
$$
The proof for right-invariant, $L$-uniform reversible metrics is similar and therefore omitted.
\end{proof}

\begin{rem}[Left half-Lie groups]
A half-Lie group is a smooth manifold $G$ modeled by a Fréchet space, such that the group operations are continuous for the manifold topology, and for each $g\in G$, the map $L_g:G\to G$ is also smooth. This notion was recently systematically introduced by Neeb and Marquis in \cite{nm}. Inspection of the results in this section shows that Finsler metrics can be introduced in that context, and the results of the previous proposition then hold. Left smoothness can be replaced by right smoothness (in fact this was how it was originally introduced by Kriegl, Michor and Reiner in \cite{kmr}, to study groups of diffeomorphisms), and evidently the notions of left-invariant and $L$-uniform can be replaced by their right counterparts, to obtain the continuity of the right multiplication map for $\tau_{\mu}$ in the previous proposition.
\end{rem}

\subsubsection{The local structure}\label{regular}

\begin{rem}\label{difexp}
If $G$ carries a smooth exponential function, then its derivative can be computed explicitly: for $v,w\in \lie(G)$,
$$
\exp_{*w}(v)=e^w\int_0^1 \Ad_{e^{-sw}}v\,ds = e^w \int_0^1 e^{-s\, \ad w}v\,ds
$$
where, fixing $w$, the convergence of the integral is for each $v\in \lie(G)$; and if the Lie algebra is complete, it defines a linear operator 
$$
\kappa_w=\int_0^1 e^{-s\, \ad  w}\,ds
$$
where the convergence is in the locally convex topology of the continuous linear operators of $\lie(G)$. 

If  $\lie(G)$ is a Banach space, this operator can be computed as the functional calculus $\ad \,w\mapsto F(\ad \,w)=\kappa_w$, where $F:\mathbb{C}\to \mathbb{C}$ is the entire function given by $F(\lambda)=\lambda^{-1}(1-e^{-\lambda})$, with $F(0)=1$. See \cite[Proposition II.5.7]{neeblie} for the details. 
\end{rem}

\begin{lem}[Continuity of the exponential map]\label{explr}
If $G$ has a smooth exponential and $\mu:TG\to\mathbb R_{\ge 0}$ is left-invariant and  $R$-uniform, then the exponential map $\exp:\lie(G)\to G$ and its differential are norm-to-$\tau_{\mu}$ continuous. Explicitly if $v,w\in \lie(G)$ and $C_w=\int_0^1 R(e^{tw})$ then
$$
\dg(e^v,e^w)\le C_w|v-w| \quad \textrm{ and } \quad |e^{-w}\exp_{*w}(v)|\le C_w |v|.
$$
When $\mu$ is right-invariant and $L$-uniform the same bounds hold replacing $C_w$ by $C_v=\int_0^1 L(e^{tv})dt$.
\end{lem}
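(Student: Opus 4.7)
The plan is to obtain both estimates from the integral formula for $\exp_*$ in Remark~\ref{difexp} combined with the $\Ad$-norm bound $|\Ad_g v|_1\le R(g^{-1})|v|_1$ of Remark~\ref{norrma}. For the differential bound, I would start from
$$
e^{-w}\exp_{*w}(v)=\int_0^1 \Ad_{e^{-sw}}v\,ds,
$$
apply $|\cdot|_1$ and use sub-additivity to pull the norm inside the integral, then bound the integrand with $g=e^{-sw}$ (so that $g^{-1}=e^{sw}$) to obtain
$$
|e^{-w}\exp_{*w}(v)|_1\le \int_0^1 |\Ad_{e^{-sw}}v|_1\,ds \le \int_0^1 R(e^{sw})\,ds\,|v|=C_w|v|.
$$

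For the distance bound, the natural candidate is the smooth path $\alpha:[0,1]\to G$ defined by $\alpha(t)=e^{(1-t)v}e^{tw}$, which joins $e^v$ to $e^w$. Applying the Lie-group product rule and using that $v$ commutes with $e^{sv}$ and $w$ with $e^{sw}$ (so in particular $\Ad_{e^{-tw}}w=w$), a direct computation yields
$$
\alpha(t)^{-1}\dot\alpha(t) = w-\Ad_{e^{-tw}}v = \Ad_{e^{-tw}}(w-v).
$$
By left-invariance $|\dot\alpha(t)|_{\alpha(t)}=|\alpha(t)^{-1}\dot\alpha(t)|_1$, and applying the $\Ad$-bound once more gives $|\dot\alpha(t)|_{\alpha(t)}\le R(e^{tw})|v-w|$. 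Integrating in $t$ produces $\LG(\alpha)\le C_w|v-w|$ and hence $\dg(e^v,e^w)\le C_w|v-w|$.

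Continuity of $\exp$ from the norm topology of $\lie(G)$ to $\tau_\mu$ is now immediate from the distance bound (if $v_n\to v$ then $\dg(e^{v_n},e^v)\le C_v|v_n-v|\to 0$), and the differential bound exhibits $\exp_{*w}:\lie(G)\to T_{e^w}G$ as a bounded linear operator of norm $\le C_w$. The right-invariant $L$-uniform case is symmetric: the same path $\alpha(t)=e^{(1-t)v}e^{tw}$ applies, but one now uses the right trivialization $\dot\alpha(t)\alpha(t)^{-1}=\Ad_{e^{(1-t)v}}(w-v)$ together with the analogous bound $|\Ad_g v|_1\le L(g)|v|_1$ (derived as in Remark~\ref{norrma} swapping $L\leftrightarrow R$), giving a length bound $\int_0^1 L(e^{(1-t)v})\,dt\,|v-w|=C_v|v-w|$. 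The only real obstacle is bookkeeping: tracking left- versus right-multiplication carefully in the product rule and matching it with the correct $R$ or $L$ bound on $\Ad$; the crucial simplification throughout is $\Ad_{e^{sx}}x=x$, which makes the derivative of $\alpha$ collapse to a single $\Ad$ applied to $w-v$.
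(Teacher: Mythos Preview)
Your proof is correct and follows essentially the same route as the paper: the path $\alpha(t)=e^{(1-t)v}e^{tw}$ you use is precisely the paper's $\alpha(t)=e^ve^{-tv}e^{tw}$, and both arguments reduce to bounding $|\Ad_{e^{-tw}}(w-v)|$ via the $R$-uniform estimate and then integrating, with the differential bound handled identically through the integral formula from Remark~\ref{difexp}. Your treatment is in fact slightly more explicit than the paper's in invoking the $\Ad$-bound from Remark~\ref{norrma} and in spelling out the right-invariant/$L$-uniform case.
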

\begin{proof}
Let $\alpha(t)=e^ve^{-tv}e^{t w}$, it a smooth path in $G$ joining $e^v,e^w$, its speed is given by $\dot{\alpha}_t=e^{(1-t)v}(w-v)e^{tw}$. Thus $|\alpha^{-1}_t\dot{\alpha}_t|\le R(e^{tw})\,|w-v|$ and integrating we obtain $\dg(e^v,e^w)\le \LG(\alpha)\le C_w |w-v|$. Likewise,
$$
|\exp_{*w}(v)|_{e^w}\le \int_0^1 |e^{-sw} v e^{sw}|ds \le \int_0^1 R(e^{-sw})ds |v|.\qedhere
$$
\end{proof}

\section{Homogeneous spaces}\label{hs}

Recall that for $g\in G$, we denote with $L_g,R_g, c_g=L_g\circ R_{g^{-1}}$ the left and right multiplication maps on $G$ and the conjugation map, their differentials at $g=1\in G$ will be denoted as $L_g,R_g,\Ad_g$ respectively. 

\bigskip

\begin{defi}\label{smoothm}
We consider a $G$-homogeneous space $M$, with the following assumptions:
\begin{enumerate}
\item  $M$ is a smooth manifold where $G$ acts smoothly (at least $C^1$) and transitively with a left action $\pi:G\times M\to M$, denoted $(g,x)\mapsto g\cdot x=\pi_x(g)=\ell_g(x)$. 
\item There exists $x\in M$ such that each (piecewise) $C^1$ path $\gamma:[a,b]\to M$ starting at $x$ has a (piecewise) $C^1$ lift $\Gamma:[a,b]\to G$ starting at $1\in G$. Namely a path such that $\Gamma(a)=1$ and $\pi_x(\Gamma)=\Gamma\cdot x=\gamma$.
\end{enumerate}
\end{defi}

\begin{rem}\label{plp}By the transitivity of the action and the smoothness of the left action of $G$ on itself, for each $y=g\cdot x\in M$ and each (piecewise) $C^1$ path $\gamma:[a,b]\to M$ starting at $y$, we have a  (piecewise) smooth lift $\Gamma:[a,b]\to G$ starting at $g\in G$. Namely a path such that $\Gamma(a)=g$ and $\pi_x(\Gamma)=\Gamma\cdot x=\gamma$. Actually, we would only need to have a good definition of smooth paths in $M$ (and the lifting property), the full notion of manifold for $M$ is not used in many considerations.
\end{rem}

Note that in particular, for any $x\in M,g\in G$, the linearization $(\pi_x)_{*g}:T_gG\to T_{g\cdot x}M$ is an epimorphism.

\begin{rem}\label{openq}
Since $G$ is a Lie group, $TG$ has a natural structure of Lie group. For $x\in M$, denote $\pi_*:TG\to TM$ given by $\pi_*(g,v)=(\pi_x(g),(\pi_x)_{*g}(v))$. What we are asking that $\pi_*:TG\to TM$ is surjective and has the path-lifting property. The assumptions are immediate if $G,M$ are $C^1$ Banach manifolds and $M$ is the image of a quotient map, since $q:g\mapsto g\cdot x$ is a $C^1$ submersion from $G$ to $M$ and the action is transitive, see for instance Upmeier's book \cite[Theorem 8.19]{upmeier}. In this case we obtain a stronger assertion, $\pi_*:TG\to TM$ is a quotient surjective map (thus open).
\end{rem}

\begin{defi}\label{reguini}
Let $x\in M$, with $K_p\subset G$ we will denote the component of the identity of the closed isotropy subgroup $\{g\in G: g\cdot x=x\}$. Throughout, we will assume that $K_p$ is a \textit{regular} Lie group and that $K_p\subset G$ is an \textit{initial} Lie group (each smooth map $f:N\to G$ from a smooth manifold $N$, with values in $K_p$, is also smooth as a map $f:N\to K_p$). 
\end{defi}

\begin{rem}\label{umenos1}
Some remarks follow: 
\begin{enumerate}
\item That $K_p$ is initial implies that $\lie(K_p)=T_1K_p=\{v\in \lie(G): (\pi_x)_{*1}v=0\}$
is a closed Lie sub-algebra of $\lie(G)$ (see \cite[Proposition II.6.3]{neeblie}).
\item That $K_p$ is regular implies that if $\alpha:[a,b]\to  G$ is piecewise smooth, then $\alpha\subset K_p$ if and only if $\alpha(t_0)\in K_p$ for some $t_0\in [a,b]$ and $\alpha^{-1}\dot{\alpha}\subset \lie(K_p)$ (equivalently $\dot{\alpha}\alpha^{-1}\subset \lie(K_p)$). Moreover, for each smooth path $\xi\subset \lie(K_p)$ there exists a smooth path $\alpha\subset K_p$ with $\alpha(0)=1$ and $\dot{\alpha}\alpha^{-1}=\xi$.
\item Assume $G$ has a smooth exponential. Then clearly if $v\in \lie(G)$ and $\exp(tv)\subset K_p$ for all $t\in\mathbb R$, then $v\in T_1K_p$. On the other hand, if $v\in\lie(K_p)$ then considering $\gamma(t)=\pi_x(e^{tv})=e^{tv}\cdot x$, we have $\gamma(0)=x$ and by Lemma \ref{propac}.2 below,
$$
\dot{\gamma}(t)=(\pi_x)_{*e^{tv}}R_{e^{tv}}v=(\ell_{e^{tv}})_{*x}(\pi_x)_{*1}\Ad_{e^{-tv}}v=(\ell_{e^{tv}})_{*x}(\pi_x)_{*1}v=0,
$$
therefore $\gamma$ is constant and $T_1K=\lie(K_p)=\{v\in \lie(G):\exp(tv)\subset K_p,\,t\in\mathbb R\}$.
\end{enumerate}
\end{rem}

\subsection{Metrics invariant for the right action of the isotropy group}\label{qm}

Recall that we assume throughout that $G$ is Lie group, acting transitively on $M\simeq {\mathcal O}(x)$ with the path lifting property, and that $K_p$ (the identity component of the isotropy group) is initial and regular. Note that these hypothesis are fulfilled for Banach-Lie groups $G$.

\begin{defi}
We assume now that there exists a distinguished $p\in M$ and a Finsler norm $\mu:TG\to \mathbb R$ which is right-invariant for the action of $K_p$; that is if $g\in G$ and $u\in K_p$ then
$$
|vu|_{gu}=|v|_g \textrm{ for all }v\in T_gG.
$$
It is immediate that the length and distance in $G$ are right-invariant for the action of $K_p$:
$$
\dg(gk,hk)=\dg(g,k)\quad \forall \, g,h\in G,\; k\in K.
$$
\end{defi}

\begin{rem}[Left-invariant metrics]\label{rik}
In particular, if $\mu=|\cdot|:\lie(G)\to\mathbb R$ is a metric which is $\Ad_{K_p}$-invariant on $\lie(G)$, for the left-invariant metric induced in $TG$ we have 
$$
|vu|_{gu}=|u^{-1}g^{-1}vu|=|g^{-1}v|=|v|_g
$$
for each $g\in G$, $u\in K_p$, $v\in T_gG$. Therefore the induced left-invariant metric is bi-invariant for the action of $K_p$. Moreover $\LG$ and $\dg:G\times G\to \mathbb R$ (the left-invariant length and distance induced by $\mu$) are both left and right-invariant for the action of the subgroup $K_p$. In the next sections we establish basic properties of groups $K$ with $\Ad_K$-invariant metrics.
\end{rem}

\begin{rem}\label{exptaucont}
 Note that for such $\Ad_K$ bi-invariant metrics (see Remark \ref{difexp} and Lemma \ref{explr})
$$
|e^{-w}\exp_{*w}(v)|\le |v| \quad \forall v\in \lie(G),\, w\in\lie(K)\qquad\textrm{ and }\qquad \dg(e^v,e^w)\le |w-v|,
$$
thus $\exp:(\lie(K),|\cdot|)\to (K,\tau_{\mu})$ is Lipschitz. Clearly, if $\Gamma\subset \lie(K)$ and $\gamma=e^{\Gamma}\subset K$ then
$$
\LG(\gamma)=\int_0^1 |\gamma^{-1}\dot{\gamma}|=\int_0^1 |e^{-\Gamma}\exp_{*\Gamma}{\dot{\Gamma}}|\le \int_0^1 |\dot{\Gamma}|= \Le(\Gamma).
$$
\end{rem}

\subsection{Quotient distance}\label{qd}

We now return to the matter of giving the quotient space $M=G/K$ a metric and a distance compatible with the action of the group $G$. We fix  $p\in M$ such that $\mu:TG\to\mathbb R$ is right-invariant for the action of $K=K_p$.

\begin{defi}
Let $K\subset G$ be the (regular and initial) Lie subgroup given by the connected component of the identity of the isotropy $\{g\in G: g\cdot p=p\}$. Let $x=g\cdot p, y=h\cdot p\in M$, define the \textit{quotient distance} $\dot{d}$ in $M=G/K$ as
$$
\dot{d}(x,y)=\dg(g K,hK)=\inf\{\dg(gk_1,hk_2):\, k_1,k_2\in K\}.
$$
\end{defi}

Recall we are assuming that the metric $\mu$ in $G$ is right-invariant for the action of $K=K_p$, therefore  
\begin{equation}\label{dpunto}
\dot{d}(x,y)=\dg(gK,hK)=\dg(gK,h)=\dg(g,hK)\le \dg(g,h).
\end{equation}

Note that the name \textit{distance} is short for pseudo-distance, since it is not necessarily reversible nor separating ($\dot{d}(x,y)=0$ does not imply $x=y\in M$).

\begin{rem}\label{notpseudo}
Since the distance from a point to a closed set is positive in a metric space, this shows that if $\dg$ is in fact a metric in $G$, then $\dot{d}$ is a metric in $M$ (whether it is reversible or not depends exclusively on the fact of the   homogeneity of $\mu$).
\end{rem}

\begin{rem}[Left-invariant metrics]
If the metric $\mu$ is left-invariant in $G$, then
\begin{equation}\label{qdist}
\dot{d}(g\cdot p,h\cdot p)=\dg(K,g^{-1}h),
\end{equation}
and the metric is invariant for the action of $G$ in $G/K$.
\end{rem}

\smallskip 

Any Cauchy sequence $(g_n\cdot p)_{n\in\mathbb N}\subseteq (M,\dot{d})$ can be lifted to a Cauchy sequence in $(G,\dg)$. Thus one obtains the following:

\begin{teo}\label{comple}
If $(G,\dg)$ is a complete metric space, then $(M,\dot{d})$ is a complete metric space.
\end{teo}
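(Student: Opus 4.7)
The strategy is exactly the one hinted at in the sentence immediately preceding the theorem: lift a (sub)sequence of a Cauchy sequence in $M$ to a Cauchy sequence in $G$, use completeness of $G$, and push the limit down via the action. Throughout I will exploit the identity $\dot{d}(g\cdot p, h\cdot p)=\dg(g, hK)$ from \eqref{dpunto}, which is the key consequence of the right-invariance of $\mu$ under $K$.

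First I would take any Cauchy sequence $(x_n)\subset (M,\dot{d})$, fix any lifts $g_n\in G$ with $g_n\cdot p=x_n$, and extract a subsequence $(x_{n_k})$ with $\dot{d}(x_{n_k}, x_{n_{k+1}})<2^{-k-1}$. The heart of the proof is an inductive construction of a \emph{better} sequence of lifts $h_k\in G$ with $h_k\cdot p=x_{n_k}$ and $\dg(h_k,h_{k+1})<2^{-k}$. I set $h_0=g_{n_0}$. Assuming $h_k$ is chosen, I use
\[
\dot{d}(x_{n_k},x_{n_{k+1}})=\dg(h_k K, g_{n_{k+1}} K)=\dg(h_k,\, g_{n_{k+1}} K)
\]
(the second equality being the one from \eqref{dpunto} that requires the right-invariance of $\mu$ under $K$). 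By definition of infimum I can pick $k_{k+1}\in K$ with
\[
\dg\bigl(h_k,\, g_{n_{k+1}}k_{k+1}\bigr)<\dot{d}(x_{n_k},x_{n_{k+1}})+2^{-k-1}<2^{-k},
\]
and set $h_{k+1}:=g_{n_{k+1}}k_{k+1}$, which still satisfies $h_{k+1}\cdot p = g_{n_{k+1}}\cdot p = x_{n_{k+1}}$ since $k_{k+1}\in K$ fixes $p$.

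The sequence $(h_k)$ is then Cauchy in $(G,\dg)$ by the geometric-series estimate $\dg(h_k,h_{k+j})<2^{-k+1}$, hence converges to some $h\in G$ by hypothesis. The inequality \eqref{dpunto}, namely $\dot{d}(a\cdot p,b\cdot p)\le \dg(a,b)$, immediately yields $x_{n_k}=h_k\cdot p\to h\cdot p$ in $(M,\dot{d})$. Finally, a Cauchy sequence that contains a convergent subsequence converges, so the original sequence $(x_n)$ converges to $h\cdot p$ in $M$, proving completeness.

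The only step that is not a routine translation of the standard proof for quotient metric spaces is guaranteeing the iterative lift; this is precisely the point where the right-$K$-invariance is essential, because it lets me fix the representative $h_k$ on one side and search for the new representative of $x_{n_{k+1}}$ only on the other side (i.e.\ it lets the infimum be taken over a single coset instead of a pair of cosets). Without that symmetry one would have to readjust all previously chosen representatives at each step, and the induction would not close.
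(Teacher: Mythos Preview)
Your argument is correct and is precisely the standard lifting argument the paper alludes to (and which is carried out in Takesaki's book, to which the paper defers for details). The key step---using right-$K$-invariance so that the infimum defining $\dot{d}$ can be taken over a single coset, allowing the inductive choice of $h_{k+1}$ without disturbing $h_k$---is exactly the mechanism in the cited proof.
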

The details of the proof can be found in Takesaki's book \cite[p. 109]{take}, we remark however that the hypothesis  $\tau_G=\tau_{\mu}$ stated in the textbook is unnecessary).

\begin{rem}[The group of metrically  null elements]\label{esgru}
Consider the set 
$$
K_{\mu}=\{u\in G:\dg(1,u)=0\},
$$
which is closed in $\tau_{\mu}$. Assume that the metric $\mu$ is reversible (i.e. $\mu$ is homogeneous). Then
\begin{enumerate}
\item If $\mu$ is $L$-uniform, and $u,k\in K_{\mu}$, then  $\dg(1,u^{-1})\le L(u^{-1})\dg(u,1)=0$  and
$$
\dg(1,uk)\le \dg(1,u)+\dg(u,uk)\le 0+ L(u)\dg(1,k)=0.
$$
Hence $K_{\mu}\leqslant K$ is a closed \textit{subgroup}. 
\item If $\mu$ is $L$-uniform and $u\in K_{\mu}$, $g,h\in G$ then
\begin{eqnarray}
\dg(gu,hu) & \le &  \dg(gu,g)+\dg(g,h)+\dg(h,hu)\nonumber\\
&\le&  L(g)\dg(u,1)+\dg(g,h)+L(H)\dg(1,u)=\dg(g,h),\nonumber
\end{eqnarray}
and with a similar reasoning starting with $\dg(g,h)$ we conclude that $\dg(gu,hu)=\dg(g,h)$.
\item Therefore if $\mu$ is $L$-uniform then \textit{$\dg$ is a right-$K_{\mu}$ invariant metric on $G$}, and it is plain that $\dg(g,h)=0$ if and only if $g^{-1}h\in K_{\mu}$.
\item If $\mu$ is $L$-uniform, note also that if $0=\dot{d}(gK_{\mu},hK_{\mu})=\dot{d}(g,hK_{\mu})$ we have $u_n\in K_{\mu}$ such that $\dg(g,hu_n)<1/n$. Hence $\dg(g,h)\le \dg(g,hu_n)+\dg(hu_n,h)<1/n+0=1/n$, therefore $g^{-1}h\in K_{\mu}$ and $gK_{\mu}=hK_{\mu}$. Hence the quotient distance $\dot{d}$ is non-degenerate. and $(G/K_{\mu},\dot{d})$ is a genuine metric space.
\item If $\mu$ is a $R$-uniform metric, we obtain that $K_{\mu}$ is also a closed subgroup, but now the metric $\dg$ is left-$K_{\mu}$ invariant on $G$.
\item  If $\mu$ is both $L$ and $R$-uniform, then for any $g\in G$ and $u\in K_{\mu}$
$$
\dg(1,gug^{-1})\le L(g)R(g^{-1})\dg(1,u)=0
$$
hence \textit{$K_{\mu}\triangleleft K$ is a normal subgroup}.
\end{enumerate}
\end{rem}

\subsection{Smooth action and quotient norm}

The following lemma collects some elementary identities that will be useful later, and will help us fix the notation. Its verification is left to the reader:

\begin{lem}\label{propac} Let $g,g'\in G$, $x\in M$, $u\in K_x$. Then
\begin{enumerate}
\item $\ell_g\circ \pi_x=\pi_x\circ L_g$, $ (\pi_x)_{*g}=(\ell_g)_{*x}\circ (\pi_x)_{*1}\circ (L_g)^{-1}_{*1}$
\item $(\ell_g)_{*x}\circ (\pi_x)_{*1}\circ \Ad_{g^{-1}}=(\pi_x)_{*g}\circ R_g$
\item $(\pi_{g\cdot x})_{*1}=(\pi_x)_{*g}\circ R_g$
\item $(\ell_{g^{-1}})_{*g\cdot x}\circ (\pi_x)_{*g}=(\pi_x)_{*1}\circ (L_{g^{-1}})_{*g}$
\item $(\pi_x)_{*u}=(\pi_x)_{*1}\circ (R_{u^{-1}})_{*u}$
\item $K_{g\cdot x}=c_g(K_x)$
\item $\lie(K_{g\cdot x})=\Ad_g(\lie(K_x))$.
\end{enumerate}
\end{lem}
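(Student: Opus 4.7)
The plan is to reduce all seven identities to three elementary functional equalities between maps $G\to M$, and then apply the chain rule. The three underlying ingredients are: (a) associativity of the action, $g\cdot(h\cdot x)=(gh)\cdot x$, which reads $\ell_g\circ\pi_x=\pi_x\circ L_g$ and is already the first clause of item 1; (b) the defining property of the isotropy, namely $\pi_x\circ R_u=\pi_x$ for $u\in K_x$; (c) the tautology $\pi_{g\cdot x}(h)=(hg)\cdot x=\pi_x(R_g h)$, i.e.\ $\pi_{g\cdot x}=\pi_x\circ R_g$, which yields item 3 immediately by differentiating at $h=1$.

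The second clause of item 1, and items 4 and 5, are then pure chain-rule manipulations. For 1, I differentiate (a) at $h=1$ to obtain $(\ell_g)_{*x}\circ(\pi_x)_{*1}=(\pi_x)_{*g}\circ(L_g)_{*1}$, and compose on the right with the inverse of $(L_g)_{*1}$. For 4, I apply the same reasoning to the $g^{-1}$ version of (a), differentiating at $h=g$. For 5, I differentiate (b) at $h=u$ to obtain $(\pi_x)_{*1}=(\pi_x)_{*u}\circ(R_u)_{*1}$ and then invert, noting that $(R_u)_{*1}^{-1}=(R_{u^{-1}})_{*u}$ as a consequence of $R_u\circ R_{u^{-1}}=\mathrm{id}_G$.

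Item 2 is the only composite step. I factor $c_{g^{-1}}=L_{g^{-1}}\circ R_g$ and differentiate at $1$ to get $\Ad_{g^{-1}}=(L_{g^{-1}})_{*g}\circ(R_g)_{*1}$; plugging this into the left-hand side of 2 and using item 4 replaces the block $(\pi_x)_{*1}\circ(L_{g^{-1}})_{*g}$ by $(\ell_{g^{-1}})_{*g\cdot x}\circ(\pi_x)_{*g}$. The two left translations on $M$ now cancel via $(\ell_g)_{*x}\circ(\ell_{g^{-1}})_{*g\cdot x}=\mathrm{id}$ (since $\ell_g\circ\ell_{g^{-1}}=\mathrm{id}_M$), leaving exactly $(\pi_x)_{*g}\circ(R_g)_{*1}$ on the right, as required.

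For items 6 and 7 the computation is set-theoretic first, then infinitesimal. The equivalence $h\cdot(g\cdot x)=g\cdot x\iff g^{-1}hg\cdot x=x$ shows that the full stabilizer of $g\cdot x$ is the conjugate of the full stabilizer of $x$; since $c_g$ is a homeomorphism, it carries the identity component to the identity component, giving item 6. Differentiating item 6 at $1$ and using $(c_g)_{*1}=\Ad_g$ together with the identification $\lie(K_\cdot)=T_1K_\cdot$ (Remark \ref{umenos1}) yields item 7. I do not expect any serious obstacle here: everything is a chain-rule exercise, and the only care required is bookkeeping, that is, tracking which tangent space each differential lives in so that each composition is admissible.
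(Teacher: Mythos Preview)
Your proposal is correct and complete; the paper itself leaves the verification to the reader, so your chain-rule reduction of all seven items to the three functional identities (associativity $\ell_g\circ\pi_x=\pi_x\circ L_g$, isotropy $\pi_x\circ R_u=\pi_x$, and base-point change $\pi_{g\cdot x}=\pi_x\circ R_g$) is exactly the intended argument. The bookkeeping of base points in item 2 and the observation that $c_g$ preserves identity components in item 6 are the only places requiring care, and you handle both correctly.
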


\begin{rem}\label{adust}
Note that by the last item of the previous lemma, if $u\in K_x$ then $K_{u\cdot x}=K_x$ therefore
$$
\Ad_u(\lie(K_x))=\lie(K_x).
$$
Assume that $G$ has a smooth exponential. If $v\in \lie(K_x)$ then by Remark \ref{umenos1}.2, $e^v\in K_x$ therefore
$$
\Ad_{e^v}(\lie(K_x))=\lie(K_{e^v\cdot x})=\lie(K_x).
$$
This implies that $\lie(K_x)$ are \textit{stable} Lie sub-algebras.
\end{rem}

The other (and relevant for our purpose) consequence of the previous lemma, is the following, which will help us establish that the quotient (infinitesimal) metric is well-defined in $TM$.

\begin{lem} Let $x\in M$, $g,h\in G$,  $\dot{g}\in T_gG, \dot{h}\in T_{h}G$. Then $g\cdot x=h\cdot x$ and $(\pi_x)_{*g}\dot{g}=(\pi_{x})_{*h}\dot{h}$ if and only if 
$$
u=g^{-1}h\in K_x \quad\textrm{ and } \quad g^{-1}\dot{g}-\Ad_u(h^{-1}\dot{h})\in \lie(K_x).
$$
\end{lem}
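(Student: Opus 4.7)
The argument is essentially a type-checking exercise in the identities gathered in Lemma \ref{propac}. Splitting the equivalence: the base-point half is immediate, since $g\cdot x = h\cdot x$ is the same as $u = g^{-1}h$ fixing $x$; the real content of the lemma is rephrasing the tangent equality $(\pi_x)_{*g}\dot g = (\pi_x)_{*h}\dot h$ as a membership of $g^{-1}\dot g - \Ad_u(h^{-1}\dot h)$ in $\lie(K_x)$.

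First I would left-translate both tangent vectors down to $T_1G$. Applying Lemma \ref{propac}.1 to each side rewrites the equality as
$$(\ell_g)_{*x}\,(\pi_x)_{*1}(g^{-1}\dot g) \;=\; (\ell_h)_{*x}\,(\pi_x)_{*1}(h^{-1}\dot h),$$
and composing with $(\ell_{g^{-1}})_{*g\cdot x} = (\ell_{g^{-1}})_{*h\cdot x}$ on the left (the equality of these operators being exactly where the base-point hypothesis enters), then using $\ell_{g^{-1}}\circ \ell_h = \ell_u$, collapses the equality to
$$(\pi_x)_{*1}(g^{-1}\dot g) \;=\; (\ell_u)_{*x}\,(\pi_x)_{*1}(h^{-1}\dot h).$$

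The heart of the argument is an auxiliary identity valid for $u \in K_x$, namely
$$(\ell_u)_{*x}\circ (\pi_x)_{*1} \;=\; (\pi_x)_{*1}\circ \Ad_u,$$
which says that left translation by $u$ on $T_xM$ and $\Ad_u$ on $\lie(G)$ intertwine through $(\pi_x)_{*1}$. I would derive this by combining Lemma \ref{propac}.2 with $g=u$, giving $(\ell_u)_{*x}\circ (\pi_x)_{*1}\circ \Ad_{u^{-1}} = (\pi_x)_{*u}\circ R_u$, together with Lemma \ref{propac}.5 giving $(\pi_x)_{*u} = (\pi_x)_{*1}\circ R_{u^{-1}}$ (applicable precisely because $u\in K_x$); the right translations cancel via $R_{u^{-1}}\circ R_u = \mathrm{id}$, leaving $(\ell_u)_{*x}\circ(\pi_x)_{*1}\circ\Ad_{u^{-1}} = (\pi_x)_{*1}$, and composing with $\Ad_u$ on the right produces the claim. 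Substituting back into the previous display transforms it into
$$(\pi_x)_{*1}\bigl(g^{-1}\dot g - \Ad_u(h^{-1}\dot h)\bigr)\;=\; 0,$$
and Remark \ref{umenos1}.1 identifies the kernel as $\lie(K_x)$, producing the stated membership. The converse is obtained by running the same equivalences in reverse: $u\in K_x$ immediately gives $h\cdot x = gu\cdot x = g\cdot x$, and the hypothesis on the difference reverses each step back up to $(\pi_x)_{*g}\dot g = (\pi_x)_{*h}\dot h$.

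There is no serious obstacle; the main care required is bookkeeping of base points when composing differentials (in particular, that $(\ell_{g^{-1}})_{*g\cdot x}$ and $(\ell_{g^{-1}})_{*h\cdot x}$ are literally the same linear map, and that Lemma \ref{propac}.5 is legal to invoke only after one has already placed $u$ in $K_x$). A small notational wrinkle is that $K_x$ is by definition only the identity component of the isotropy, whereas the condition $g\cdot x = h\cdot x$ in the forward direction puts $u$ a priori only in the full isotropy; this is harmless in typical applications of the lemma but merits mention.
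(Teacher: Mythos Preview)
Your proof is correct and follows essentially the same strategy as the paper: both reduce the tangent equality to the vanishing of $(\pi_x)_{*1}$ on $g^{-1}\dot g - \Ad_u(h^{-1}\dot h)$ by shuffling the identities of Lemma \ref{propac}. The paper introduces an auxiliary curve $\gamma$ through $h$ and differentiates $\pi_x(g^{-1}\gamma)=\ell_{g^{-1}}\pi_x(\gamma)$, then invokes items 4 and 5 of Lemma \ref{propac}; you work directly with items 1, 2, and 5 and isolate the intertwining identity $(\ell_u)_{*x}\circ(\pi_x)_{*1}=(\pi_x)_{*1}\circ\Ad_u$, which is arguably cleaner and also makes the converse direction explicit (the paper omits it). Your closing remark about $K_x$ being only the identity component is a genuine subtlety that the paper's proof shares and does not address.
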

\begin{proof}
Assume $g\cdot x=h\cdot x$ and $(\pi_x)_{*g}\dot{g}=(\pi_{x})_{*h}\dot{h}$. Clearly $u=g^{-1}h\in K_x$. Let $\gamma\subset G$ be a smooth path with $\gamma(0)=h, \dot{\gamma}(0)=\dot{h}$, and consider the identity $(g^{-1}\gamma)\cdot x=g^{-1}\cdot(\gamma\cdot x)$ written as $\pi_x(g^{-1}\gamma)=\ell_{g^{-1}}\pi_x(\gamma)$. Differentiating at $t=0$ we obtain 
$$
(\pi_x)_{*g^{-1}h}g^{-1}\dot{h}=(\ell_{g^{-1}})_{*h\cdot x}(\pi_x)_{*h}\dot{h}=(\ell_{g^{-1}})_{*g\cdot x}(\pi_x)_{*h}\dot{h}
$$
where in the last equality we just used once that $h\cdot x=g\cdot x$. Now since $u=g^{-1}h\in K_x$, the fifth item of the previous lemma shows that
$$
(\pi_x)_{*1}(g^{-1}\dot{h}h^{-1}g)=(\pi_x)_{*g^{-1}h}(g^{-1}\dot{h}).
$$
Combining the previous equalities, we obtain
$$
(\pi_x)_{*1}(g^{-1}\dot{h}h^{-1}g)=(\ell_{g^{-1}})_{*g\cdot x}(\pi_x)_{*h}\dot{h}=(\ell_{g^{-1}})_{*g\cdot x}(\pi_x)_{*g}\dot{g}
$$
where in the last equality we used the hypothesis $(\pi_x)_{*g}\dot{g}=(\pi_{x})_{*h}\dot{h}$. Applying the fourth item of the previous lemma,
$$
(\pi_x)_{*1}(g^{-1}\dot{h}h^{-1}g)=(\ell_{g^{-1}})_{*g\cdot x}(\pi_x)_{*g}\dot{g}=(\pi_x)_{*1}(g^{-1}\dot{g}).
$$
Hence $g^{-1}\dot{g}-g^{-1}\dot{h}h^{-1}g\in \ker (\pi_x)_{*1}=\lie(K_x)$ as claimed.
\end{proof}

Since $(\pi_x)_{*1}:T_1G\to T_xM$ is an epimorphism with kernel $\lie(K_x)$, we have a natural identification 
$$
T_xM\simeq \lie(G)/\lie(K_x).
$$
This motivates the following definition (for the case of left-invariant metrics in Banach manifolds, see Upmeier \cite[Proposition 12.31]{upmeier} and the references therein).

\begin{defi}[Quotient Finsler metric]\label{qfm}
Assume that the Finsler norm $\mu$ in $TG$ is $K$-right invariant for given $K=K_p$. Let $v\in T_{g\cdot p}M$, $v=(\pi_p)_{*g}(\dot{g})$ for some $\dot{g}\in T_gG$, and define
$$
\mu(v)_{g\cdot p}=\inf\{ |\dot{g}-gz|_g:\, z\in \lie(K)\}=\textrm{dist}_{\mu_g}(\dot{g},g\lie(K)),
$$
the linear distance in $T_gG$ from any lift $\dot{g}$ of $v$ to the subspace $g\lie(K)\subset T_gG$.
\end{defi}

\begin{prop}\label{eses}
Let $\mu:TM\to\mathbb R$ be defined as above. Then
\begin{enumerate}
\item It is well defined: if  $g\cdot p=h\cdot p$ and $(\pi_p)_{*g}\dot{g}=(\pi_{x})_{*h}\dot{h}=v$, then
$$
\textrm{dist}_{\mu_g}(\dot{g},g\lie(K))=\textrm{dist}_{\mu_h}(\dot{h},h\lie(K)),
$$
hence it defines a Finsler norm on each $T_xM$, $x\in M$.
\item If the metric of $G$ is left-invariant, then 
$$
\mu(v)_{g\cdot p}=\inf\{ |g^{-1}\dot{g}-z|:\, z\in \lie(K)\}=\dg(g^{-1}\dot{g},\lie(K))
$$
and
\begin{equation}\label{arribajo}
|\mu(v)_{g\cdot p}-\mu(w)_{h\cdot p}|\le |g^{-1}\dot{g}-h^{-1}\dot{h}|.
\end{equation}
\item In that case $\mu$ is left-invariant for the  action of $G$ in $M$ given by the automorphisms $(\ell_h)_{*}:TM\to TM$, $h\in G$:
$$
\mu((\ell_h)_{*g\cdot p} (\pi_p)_{*g}\dot{g})_{hg\cdot p}=\mu((\pi_p)_{*g}\dot{g})_{g\cdot p}=\mu( (\pi_p)_{*1}(g^{-1}\dot{g}))_p
$$
for all $g\in G$, $\dot{g}\in T_gG$.
\end{enumerate}
\end{prop}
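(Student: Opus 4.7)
\medskip

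\textbf{Plan of proof.} The heart of the matter is part (1); parts (2) and (3) follow by formal manipulation of the definition once well-definedness is established. Throughout I will use freely the preceding lemma (the characterization of when two tangent vectors on $G$ project to the same tangent vector on $M$), the right-$K$-invariance of $\mu$ on $TG$, and the stability $\Ad_K \lie(K) = \lie(K)$ recorded in Remark \ref{adust}.

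\medskip

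\textbf{Part (1), well-definedness.} Suppose $g\cdot p = h\cdot p$ and $(\pi_p)_{*g}\dot g = (\pi_p)_{*h}\dot h$. By the previous lemma, $u := g^{-1}h \in K$ and there exists $w \in \lie(K)$ with $g^{-1}\dot g - \Ad_u(h^{-1}\dot h) = w$. Multiplying by $g$ and using $g\Ad_u(h^{-1}\dot h)=gu(h^{-1}\dot h)u^{-1} = \dot h u^{-1}$, this reads $\dot g = gw + \dot h u^{-1}$, i.e.\ $\dot g u = gwu + \dot h$. Given any $z \in \lie(K)$, compute
\[
\dot g u - gzu = \dot h - g(z-w)u = \dot h - h\,\Ad_{u^{-1}}(z-w).
\]
Right-invariance of $\mu$ under $K$ yields $|\dot g - gz|_g = |\dot g u - gzu|_{gu} = |\dot h - h\,\Ad_{u^{-1}}(z-w)|_h$. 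As $z$ ranges over $\lie(K)$, the element $z' := \Ad_{u^{-1}}(z-w)$ also ranges over all of $\lie(K)$, by stability. Taking the infimum on both sides gives the required equality of distances.

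\medskip

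\textbf{Part (2).} If the metric on $G$ is left-invariant, then $|\dot g - gz|_g = |g^{-1}\dot g - z|$, which immediately gives the stated formula $\mu(v)_{g\cdot p} = \mathrm{dist}_\mu(g^{-1}\dot g, \lie(K))$. The Lipschitz-type inequality (\ref{arribajo}) is a general fact about distance-to-a-subspace: for any $z \in \lie(K)$,
\[
\mu(v)_{g\cdot p} \le |g^{-1}\dot g - z| \le |g^{-1}\dot g - h^{-1}\dot h| + |h^{-1}\dot h - z|,
\]
and taking the infimum over $z \in \lie(K)$ on the right-hand side gives $\mu(v)_{g\cdot p} - \mu(w)_{h\cdot p} \le |g^{-1}\dot g - h^{-1}\dot h|$; by symmetry the absolute value is controlled.

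\medskip

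\textbf{Part (3).} The first identity of Lemma \ref{propac}, $\ell_h\circ \pi_p = \pi_p\circ L_h$, differentiated at $g$ in the direction $\dot g$ yields $(\ell_h)_{*g\cdot p}(\pi_p)_{*g}\dot g = (\pi_p)_{*hg}(h\dot g)$. Applying the formula from part (2) with lift $h\dot g \in T_{hg}G$,
\[
\mu\bigl((\ell_h)_{*g\cdot p}(\pi_p)_{*g}\dot g\bigr)_{hg\cdot p} = \inf_{z\in\lie(K)} |(hg)^{-1}h\dot g - z| = \inf_{z\in\lie(K)} |g^{-1}\dot g - z| = \mu((\pi_p)_{*g}\dot g)_{g\cdot p}.
\]
Finally, the identity $\mu((\pi_p)_{*g}\dot g)_{g\cdot p} = \mu((\pi_p)_{*1}(g^{-1}\dot g))_p$ is the special case $h = g^{-1}$ just proved.

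\medskip

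\textbf{Main obstacle.} The only genuinely non-formal step is (1): one must simultaneously exploit the $K$-right-invariance of $\mu$ (to move the infimum from the fiber over $g$ to the fiber over $h = gu$) and the $\Ad_K$-stability of $\lie(K)$ (to verify that the re-parametrization $z \mapsto \Ad_{u^{-1}}(z-w)$ is a bijection of $\lie(K)$). The algebraic identity $\dot g u = gwu + \dot h$, extracted from the preceding lemma, is what makes both uses compatible.
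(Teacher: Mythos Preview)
Your proof is correct and follows essentially the same approach as the paper's. Both arguments for part~(1) hinge on the same two ingredients---right-$K$-invariance of $\mu$ to transport the infimum from the fiber over $g$ to the fiber over $h=gu$, and $\Ad_K$-stability of $\lie(K)$ to see that the reparametrization of $z$ exhausts $\lie(K)$---with your algebraic identity $\dot g u = gwu + \dot h$ playing the same role as the paper's chain of rewritings; parts~(2) and~(3) are handled identically.
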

\begin{proof}
If  $g\cdot p=h\cdot p$ and $(\pi_p)_{*g}\dot{g}=(\pi_{x})_{*h}\dot{h}=v$, then by the previous lemma  $u=g^{-1}h\in K$ and $g^{-1}\dot{g}-g^{-1}\dot{h}h^{-1}g\in \lie(K)$. Hence for any $z\in \lie(K)$,
\begin{eqnarray}
\textrm{dist}_{\mu_g}(\dot{g},g\lie(K)) &\le & |\dot{g}-gz|_g=|\dot{g}u-gzu|_{gu}\nonumber\\
&= & |g(g^{-1}\dot{g}-z-\Ad_u(h^{-1}\dot{h})+uh^{-1}\dot{h}u^{-1})u|_h\nonumber\\
&=& |g(-z'+uh^{-1}\dot{h}u^{-1})u|_h = |guh^{-1}\dot{h}-gz'u|_h\nonumber\\
&=& |\dot{h}-hu^{-1}z'u|_h=|\dot{h}-hz''|_h\nonumber
\end{eqnarray}
where $z',z''\in \lie(K)$ are arbitrary (the first change of variable is by addition, and the second one is by conjugation, Remark \ref{adust}). Taking the infimum over $z''\in \lie(K)$ we obtain one inequality, and by symmetry we conclude that the quotient metric is well-defined. It is easy to check that each $\mu(\cdot)_{g\cdot p}$ is a Finsler norm on each $T_{g\cdot p}M$. 

Now assume that the metric $\mu$ in $G$ is left-invariant, the first assertion is clear. Choose a minimizing sequence $z_n\in \lie(K)$ such that $|h^{-1}\dot{h}-z_n|\to \mu_{h\cdot p}(w)$. Then
$$
\mu(v)_{g\cdot p}-|h^{-1}\dot{h}-z_n|  \le  |g^{-1}\dot{g}-z_n|-|h^{-1}\dot{h}-z_n| \le |g^{-1}\dot{g}-h^{-1}\dot{h}|.
$$
Letting $n\to \infty$, and doing the same reasoning with the opposite substraction, we obtain the equation (\ref{arribajo}). The last assertion is straightforward from the definitions and the identities of Lemma \ref{propac}.
\end{proof}

\medskip

Recall the definition of $L$-uniform (Definition \ref{sc}), that guarantees that left-multiplication is continuous for $\tau_{\mu}$. If guarantees that the quotient metric is continuous along smooth paths.
 
\begin{lem}\label{mabajo}
Assume that the norm $\mu:TG\to\mathbb R$ is $L$-uniform, then
\begin{enumerate}
\item The quotient Finsler metric $\mu:TM\to \mathbb R$ of Definition \ref{qfm} is continuous along paths, i.e. if $\gamma:[a,b]\to M$ is piecewise $C^1$, then $t\mapsto \mu(\dot{\gamma}_t)_{\gamma_t}$ is continuous.
\item If $\pi_*:TG\to TM$ is a quotient map, then $\mu:TM\to\mathbb R$ is continuous.
\item For the left action of $G$ on $M$ we have $\|(\ell_h)_{*g\cdot p}\|\le L(h)$ for all $g,h\in G$.
\end{enumerate} 
\end{lem}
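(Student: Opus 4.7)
My plan is to dispatch the three items in increasing order of difficulty, with items (1) and (2) both driven by a ``left-translation comparison'' estimate that compares the quotient norm at two nearby base points using $L$-uniformity.

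For (3), I would start from the $G$-equivariance $\ell_h\circ\pi_p=\pi_p\circ L_h$ of Lemma \ref{propac}.1. Lifting $v\in T_{g\cdot p}M$ to $\dot g\in T_gG$, the image $(\ell_h)_{*g\cdot p}v$ is represented by the lift $h\dot g\in T_{hg}G$, so the definition of the quotient norm factors $L_h$ out of the infimum:
$$
\mu((\ell_h)_{*g\cdot p}v)_{hg\cdot p}=\inf_{z\in\lie(K)}|L_h(\dot g-gz)|_{hg}\le L(h)\inf_{z\in\lie(K)}|\dot g-gz|_g,
$$
which is the claim.

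For (1), I would fix $s\in[a,b]$ and a $C^1$ lift $\Gamma$ of $\gamma$, and split continuity into upper and lower semi-continuity. The upper bound $\limsup_{t\to s}\mu(\dot{\gamma}_t)_{\gamma_t}\le\mu(\dot{\gamma}_s)_{\gamma_s}$ is easy: pick an $\epsilon$-minimizer $z\in\lie(K)$ at $s$, use the same $z$ as a competitor at $t$, and invoke continuity of $\mu$ on $TG$ together with smoothness of left multiplication so that $|\dot\Gamma_t-\Gamma_tz|_{\Gamma_t}\to|\dot\Gamma_s-\Gamma_sz|_{\Gamma_s}$. For the reverse, with $h_t=\Gamma_s\Gamma_t^{-1}$, the algebraic identity $L_{h_t}(\dot\Gamma_t-\Gamma_tz)=h_t\dot\Gamma_t-\Gamma_sz$ combined with $L$-uniformity yields
$$
\mu(\dot{\gamma}_s)_{\gamma_s}\le|\dot\Gamma_s-h_t\dot\Gamma_t|_{\Gamma_s}+L(h_t)\,\mu(\dot{\gamma}_t)_{\gamma_t}
$$
after taking the infimum over $z$. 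This is the step I expect to be the main obstacle: $L$ is only \emph{upper} semi-continuous, so $L(h_t)$ need not tend to $1$. I would overcome this by a two-stage $\epsilon$-argument: fix $\epsilon>0$; for $t$ sufficiently close to $s$ both $L(h_t)<1+\epsilon$ (upper semi-continuity with $L(1)=1$) and $|\dot\Gamma_s-h_t\dot\Gamma_t|_{\Gamma_s}<\epsilon$ (continuity of $\mu$ on $TG$); then $\mu(\dot{\gamma}_s)_{\gamma_s}\le\epsilon+(1+\epsilon)\mu(\dot{\gamma}_t)_{\gamma_t}$, and letting $\epsilon\to 0$ after taking $\liminf_{t\to s}$ gives the desired bound.

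For (2), the quotient-map hypothesis reduces continuity of $\mu:TM\to\mathbb R$ to continuity on $TG$ of the composition $F(g,\dot g):=\inf_{z\in\lie(K)}|\dot g-gz|_g$. For a convergent net $(g_n,\dot g_n)\to(g,\dot g)$ in $TG$, I would repeat the two-sided $\epsilon$-minimizer argument from (1), this time with $k_n=gg_n^{-1}\to 1$ playing the role of $h_t$: the same choice of $\epsilon$-minimizer at $(g,\dot g)$ as competitor for $F(g_n,\dot g_n)$ gives $\limsup F(g_n,\dot g_n)\le F(g,\dot g)$, while picking $\epsilon$-minimizers $z_n$ for $F(g_n,\dot g_n)$ and applying the comparison estimate $|\dot g-gz_n|_g\le|\dot g-k_n\dot g_n|_g+L(k_n)|\dot g_n-g_n z_n|_{g_n}$, followed by the same $\epsilon$-absorption trick against the upper semi-continuity of $L$, gives $F(g,\dot g)\le\liminf F(g_n,\dot g_n)$.
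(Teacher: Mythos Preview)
Your proposal is correct and follows essentially the same route as the paper. The paper's proof derives a single comparison inequality
\[
\mu(v)_{g\cdot p}-\mu(w)_{h\cdot p}\le L(gh^{-1})L(h)\,|g^{-1}\dot g-h^{-1}\dot h|_1+(L(gh^{-1})-1)\,\mu(w)_{h\cdot p}
\]
by left-translating by $gh^{-1}$ and using $L$-uniformity, then applies it symmetrically to handle both (1) and (2); your inequality $\mu(\dot{\gamma}_s)_{\gamma_s}\le|\dot\Gamma_s-h_t\dot\Gamma_t|_{\Gamma_s}+L(h_t)\,\mu(\dot{\gamma}_t)_{\gamma_t}$ is the same estimate in slightly different packaging. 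Your split into upper and lower semi-continuity is a minor organisational difference (and your observation that the upper bound needs only continuity of $\mu$ on $TG$, not $L$-uniformity, is a small economy the paper does not make), but the key mechanism and the handling of the merely upper-semicontinuous $L$ are identical. Item (3) is verbatim the paper's argument.
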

\begin{proof}
Let $(x,w)=(h\cdot p, (\pi_p)_{*h}\dot{h})\in TM$ and $\varepsilon>0$. Let $z_n\in\lie(K)$ be such that $|\dot{h}-hz_n|_h<\mu(w)_x+\nicefrac{1}{n}$ for each $n\in\mathbb N$. Then for each $g\in G$, and $v=(\pi_p)_{*g}\dot{g}\in TM$ with $\dot{g}\in T_gG$, we have
\begin{eqnarray}
\mu(v)_{g\cdot p}-|\dot{h}-hz_n|_h & \le &  L(gh^{-1})\, |hg^{-1}\dot{g}-hz_n|_h-|\dot{h}-hz_n|_h \nonumber\\
& \le & L(gh^{-1}\,|hg^{-1}\dot{g}-\dot{h}|_h+|L_{gh^{-1}}|\, |hz_n-\dot{h}|_h - |\dot{h}-hz_n|_h \nonumber\\
&\le & L(gh^{-1}) L(h) |g^{-1}\dot{g}-h^{-1}\dot{h}|_1+(L(gh^{-1})-1)|\dot{h}-hz_n|_h.\nonumber
\end{eqnarray}
If we let $n\to\infty$, we obtain
$$
\mu(v)_{g\cdot p}-\mu(w)_x\le L(gh^{-1})L(h) |g^{-1}\dot{g}-h^{-1}\dot{h}|_1+(L(gh^{-1})-1) \mu(w)_x.
$$
Exchanging $v,w$ we obtain a similar bound; note we can use the inequality  just obtained to bound  $\mu(v)_{g\cdot p}$ in terms of $\mu(w)_x$ (which is fixed) and other terms. The difference $|\mu(v)_{g\cdot p}-\mu(w)_x|$ is then controlled by $|g^{-1}\dot{g}-h^{-1}\dot{h}|_1$ and $L(gh^{-1})-1,L(hg^{-1})-1$ (multiplied by some bounded terms). Then if $\gamma\subset M$ is piecewise smooth and $\Gamma\subset G$ is any piecewise smooth lift, for each $s,t\in [a,b]$ the quantity
$$
|\mu(\dot{\gamma}_t)_{\gamma_t}-\mu(\dot{\gamma}_s)_{\gamma_s}|
$$
is controlled by $|\Gamma_t^{-1}\dot{\Gamma}_t-\Gamma_s^{-1}\dot{\Gamma}_s|_1$, and $L(\Gamma_t\Gamma_s^{-1})-1,L(\Gamma_s\Gamma_t^{-1})-1$ (multiplied by some bounded terms). This proves that $\mu$ is continuous along piecewise $C^1$ paths.

Now assume that $\pi_*:TG\to TM$ is a quotient map, in particular open. Then for given $\varepsilon>0$ choose a neighborhood $Z\subset TG$ of $(h,\dot{h})$ such that 
$$
L(gh^{-1})-1, \; L(hg^{-1})-1, \;|g^{-1}\dot{g}-h^{-1}\dot{h}|<\delta
$$
if $(g,\dot{g})\in Z$ (by the continuity of the norm $\mu$ and the $C^1$ smoothness of the group operations).  Then if $(g\cdot p,(\pi_p)_{*g}\dot{g} )\in \pi_*(Z)$, we have that
$$
|\mu(v)_{g\cdot p}-\mu(w)_{h\cdot p}|<\varepsilon
$$
by adjusting $\delta$. This proves that $\mu:TM\to \mathbb R$ is continuous. To prove the final assertion, recall $\ell_h(g\cdot p)=(\pi_p\circ L_h)(g)$ thus if $v=(\pi_p)_{*g}\dot{g}\in T_{g\cdot p}M$, then
$$
(\ell_h)_{*g\cdot p}v=(\pi_p)_{*hg} (L_h)_{*g}\dot{g}
$$
by Lemma \ref{propac}, which in turn implies
\begin{eqnarray}
\mu( (\ell_h)_{*g\cdot p}v)_{hg\cdot p} & = &\inf\{  |(L_h)_{*g}\dot{g}-hgz|_{hg}:z\in \lie(K)\}\nonumber\\ 
& \le & L(h)\,\inf\{|\dot{g}-gz|_g:z\in \lie(K)\}=L(h)\mu(v)_{g\cdot p}.\qedhere
\nonumber
\end{eqnarray}
\end{proof}

\bigskip

Since in any case $\mu:TM\to\mathbb R$ is continuous along smooth paths, we now define the quotient length and metric. 

\begin{defi}[Quotient Length and Distance] 
Let $\mu:TM\to\mathbb R$ be the quotient Finsler metric of $\mu:TG\to\mathbb R$ induced by the action $\pi:G\times M\to M$ (Definition \ref{qfm}). Let $\gamma\subset M$ be a piecewise $C^1$ path, and define
$$
L_M(\gamma)=\int_0^1 \mu(\dot{\gamma}_t)_{\gamma_t} dt.
$$ 
We define accordingly the quotient metric $d_M(x,y)$ as the infima of the lengths of curves in $M$ joining $x,y\in M$. 
\end{defi}

\begin{rem}
When the metric of $G$ is left-invariant, these length and distance in $M$ are left-invariant for the action of the group $G$, that is $L_M(h\cdot\gamma)=L_M(\ell_h \gamma)=L_M(\gamma)$ and
\begin{equation}\label{elem}
d_M(h\cdot x,h\cdot y)=d_M(x,y)=\inf\{L_M(\gamma):\gamma(0)=x,\gamma(1)=y\}
\end{equation}
for any $x,y\in M$, $h\in G$ and $\gamma\subset M$. 

\smallskip

When the metric of $G$ is only $L$-uniform, then reasoning as in the proof of Proposition \ref{contigru} and using the bound $\|(\ell_h)_{*g\cdot p}\|\le L(h)$ of Lemma \ref{mabajo}, we obtain 
$$
L_M(\ell_h\gamma)\le L(h)L_M(\gamma) \textrm{ an } d_M(h\cdot x,h\cdot y)\le L(h)d_M(x,y).
$$
Thus in any case $G$ acts continuously in the metric space $(M,d_M)$.
\end{rem}

This section of the paper was mostly inspired by the work of Recht et al. \cite{dmlr,dmlr2}, where the authors introduce the notion of quotient tangent norm, to study the  metric geometry in homogeneous spaces of the unitary group of a $C^*$-algebra.

\subsection{Comparison of quotient distances}

Recall $\pi_p:G\to M$ has the path lifting property (Remark \ref{plp}), i.e. each piecewise smooth curve  $\gamma:[a,b]\to M$ starting at $x=g\cdot p$ can be lifted to a piecewise smooth $\Gamma:[a,b]\to G$ such that $\Gamma\cdot p=\gamma$ and $\Gamma(a)=g$. Note that by picking $z=0\in \lie(K)$ we have
\begin{equation} \label{lasnorm}
\mu(\dot{\gamma}_t)_{\gamma_t}  =  \inf\{ |\dot{\Gamma}_t- \Gamma_t z|_{\Gamma_t}: z\in \lie(K)\} \le   |\dot{\Gamma}_t|_{\Gamma_t}
\end{equation}
for each $t$, therefore 
\begin{equation}\label{desil}
L_M(\gamma)=L_M(\Gamma\cdot p)\le \LG(\Gamma) \textrm{ and } d_M(g\cdot p,h\cdot p)\le \dg(g,\Gamma(b)).
\end{equation}
As a corollary, we have a comparison of both quotient metrics on $M$:
\begin{lem}
Let $x,y\in M$, then $d_M(x,y)\le \dot{d}(x,y)$.
\end{lem}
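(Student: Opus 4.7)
The plan is to read off the inequality directly from equation (\ref{desil}) combined with the definition of the quotient distance $\dot{d}$, by projecting length-realizing curves from $G$ down to $M$.

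Fix $x=g\cdot p$ and $y=h\cdot p$ in $M$, and let $\varepsilon>0$. First I would use the definition of $\dot{d}(x,y)=\inf\{\dg(gk_1,hk_2):k_1,k_2\in K\}$ to pick $k_1,k_2\in K$ with $\dg(gk_1,hk_2)<\dot{d}(x,y)+\varepsilon/2$, and then select a piecewise smooth (or rectifiable) path $\Gamma:[0,1]\to G$ joining $gk_1$ to $hk_2$ with $\LG(\Gamma)<\dot{d}(x,y)+\varepsilon$. Set $\gamma=\pi_p(\Gamma)=\Gamma\cdot p$; this is a piecewise smooth curve in $M$, and since $k_1,k_2$ belong to the stabilizer $K=K_p$, the endpoints are $\gamma(0)=gk_1\cdot p=g\cdot p=x$ and $\gamma(1)=hk_2\cdot p=h\cdot p=y$.

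Now I would invoke the inequality (\ref{desil}), which was established just before the lemma via the choice $z=0\in\lie(K)$ in the definition of the quotient norm: it gives $L_M(\gamma)=L_M(\Gamma\cdot p)\le \LG(\Gamma)$. Chaining these bounds,
\[
d_M(x,y)\le L_M(\gamma)\le \LG(\Gamma)<\dot{d}(x,y)+\varepsilon.
\]
Letting $\varepsilon\to 0$ yields $d_M(x,y)\le \dot{d}(x,y)$, which is the claim.

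The only point requiring attention is not an obstacle but a verification: one must check that the projection of a piecewise $C^1$ lift $\Gamma$ in $G$ is again piecewise $C^1$ in $M$, so that $L_M(\gamma)$ is defined as in Section \ref{qd}. This is immediate from the smoothness of the action $\pi_p:G\to M$. No use of $L$-uniformity, left-invariance, or any regularity of the isotropy beyond what was already folded into the definitions of $\mu:TM\to\mathbb{R}$ and $L_M$ is needed; the lemma is essentially a restatement of the fact that $\pi_p$ is distance-nonincreasing at the infinitesimal level (equation (\ref{lasnorm})).
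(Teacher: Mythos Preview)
Your proof is correct and follows essentially the same route as the paper: project a near-optimal path in $G$ joining fiber representatives down to $M$ via $\pi_p$, and use the pointwise inequality (\ref{lasnorm})/(\ref{desil}) to bound $L_M$ by $\LG$. The paper streamlines slightly by using the right $K$-invariance (\ref{dpunto}) to reduce to a single $u\in K$ and takes the infimum directly instead of an $\varepsilon$-argument, but this is cosmetic.
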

\begin{proof}
Let $x=g\cdot p, y=h\cdot p$. Pick $u\in K$ and any smooth curve $\Gamma\subset G$ joining $g$ to $hu$, take $\gamma=\Gamma\cdot p$ that joins $g\cdot p$ to $h\cdot p$, therefore by inequality (\ref{desil}), $d_M(g\cdot p,h\cdot p)\le \dg(g,hu)=\dg(gu^{-1},h)$. Taking infimum over $u\in K$ and recalling (\ref{dpunto}) proves the assertion.
\end{proof}

Having established the relatively simple inequalities, we now work to  obtain reversed inequalities of lengths and metrics. We start with a definition.

\begin{defi}[Isometric and $\varepsilon$-isometric lifts] Let $\varepsilon\ge 0$, let $\gamma\subset M$ be piecewise smooth, we say that $\Gamma\subset G$ is an $\varepsilon$\textit{-isometric lift} of $\gamma$ if $\Gamma\cdot x=\gamma$ and $\LG(\Gamma)\le L_M(\gamma)+\varepsilon$. If $\varepsilon=0$, we say that $\Gamma$ is an \textit{isometric lift} of $\gamma$. 
\end{defi}

\begin{teo}\label{sigual}
If each $\gamma\subset M$ admits, for each $\varepsilon>0$, an $\varepsilon$-isometric lift $\Gamma_{\varepsilon}\subset G$, then $d_M=\dot{d}$ on $M$.
\end{teo}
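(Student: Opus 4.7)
The inequality $d_M \le \dot{d}$ is already established in the lemma immediately preceding the theorem, so the task reduces to proving the reverse inequality $\dot{d}(x,y) \le d_M(x,y)$ for all $x,y \in M$. My plan is a direct chain of bounds: I approximate $d_M(x,y)$ from above by the length of a path $\gamma$ in $M$, then I use the hypothesis to approximate $L_M(\gamma)$ from above by the length of a lift $\Gamma$ in $G$, then I bound $\dg$ between appropriate $K$-cosets by $\LG(\Gamma)$.

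More precisely, fix representatives $x = g\cdot p$ and $y = h\cdot p$, and fix $\varepsilon > 0$. First, by definition of $d_M$ as the infimum of quotient lengths, I select a piecewise $C^1$ curve $\gamma:[0,1]\to M$ with $\gamma(0)=x$, $\gamma(1)=y$ and
\[
L_M(\gamma) < d_M(x,y) + \tfrac{\varepsilon}{2}.
\]
Second, by the hypothesis of the theorem, $\gamma$ admits an $\tfrac{\varepsilon}{2}$-isometric lift $\Gamma:[0,1]\to G$ satisfying $\Gamma\cdot p = \gamma$ and $\LG(\Gamma) \le L_M(\gamma) + \tfrac{\varepsilon}{2}$. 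Third, since $\Gamma(0)\cdot p = x = g\cdot p$ and $\Gamma(1)\cdot p = y = h\cdot p$, the endpoints lie respectively in $gK$ and $hK$; hence by the definition of $\dot{d}$,
\[
\dot{d}(x,y) = \dg(gK, hK) \le \dg(\Gamma(0),\Gamma(1)) \le \LG(\Gamma) \le L_M(\gamma) + \tfrac{\varepsilon}{2} < d_M(x,y) + \varepsilon.
\]
Letting $\varepsilon\to 0^+$ yields $\dot{d}(x,y) \le d_M(x,y)$, and combining with the previous lemma concludes the proof.

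The one technical point I expect to need some care is the claim that the endpoints $\Gamma(0)$ and $\Gamma(1)$ really lie in the cosets $gK$ and $hK$, since a priori they differ from $g$ and $h$ only by elements of the full isotropy subgroup and not necessarily by elements of its identity component $K=K_p$. This is where the right-$K$-invariance of $\mu$ plays its role: given any lift produced by the hypothesis, one may right-translate it by a fixed element of $K$ without changing its length, and by the path-lifting property of Remark \ref{plp} one can normalize the starting point to be exactly $g$. Once this normalization is in place the chain of inequalities above goes through verbatim, and the remainder of the argument is purely formal.
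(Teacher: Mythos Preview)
Your proof is correct and takes essentially the same approach as the paper's: both pick an almost-minimizing $\gamma\subset M$, take an $\varepsilon$-isometric lift, and bound $\dot d(x,y)$ by $\LG(\Gamma)\le L_M(\gamma)+\varepsilon$. The paper's proof simply assumes the lift starts at $g$ and then writes $\Gamma_\varepsilon(b)=hu$ with $u\in K$ without further comment, so the connected-component issue you flag is present (and handled no more carefully) in the original as well.
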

\begin{proof}
It suffices to prove that $d_M(g\cdot p,h\cdot p)\ge \dot{d}(g\cdot p,h\cdot p)$ for any $g,h\in G$. To this end, let $\varepsilon>0$ and pick $\gamma\subset M$ such that $L_M(\gamma)\le d_M(g\cdot p,h\cdot p)+\varepsilon$, and take its $\varepsilon$-isometric lift $\Gamma_{\varepsilon}\subset G$ starting at $g\in G$. Since $\Gamma_{\varepsilon}(b)\cdot p=\gamma(b)=h\cdot p$, then $\Gamma_{\varepsilon}(b)=hu$ for some $u\in K$. Then
\begin{eqnarray}
\dot{d}(g\cdot p,h\cdot p) & = & \dg(gK,h)\le \dg(gu^{-1},h)=\dg(g,hu)\le \LG(\Gamma_{\varepsilon})\nonumber\\
& \le & L_M(\gamma)+\varepsilon\le d_M(g\cdot p,h\cdot p)+2\varepsilon.\qedhere
\nonumber
\end{eqnarray}
\end{proof}

\medskip

We now prove a theorem that establishes the existence of almost isometric lifts. As mentioned in Definition \ref{reguini}, the isotropy group $K$ is assumed to be regular, and the Finsler metric $\mu:TG\to\mathbb R$ is assumed to be $L$-uniform (Definition \ref{sc}).

\begin{teo}\label{igu}
Let $\gamma:[a,b]\to M$ be piecewise smooth, starting at $x=g\cdot p$. Then for each $\varepsilon>0$ there exists an $\varepsilon$-isometric lift $\Gamma_{\varepsilon}:[a,b]\to G$  of $\gamma$ with $\Gamma_{\varepsilon}(a)=g$.
\end{teo}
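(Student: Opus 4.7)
The plan is to produce $\Gamma_\varepsilon$ by right-multiplying some given lift $\Gamma:[a,b]\to G$ of $\gamma$ (which exists by the path lifting property, Remark \ref{plp}) with a carefully chosen smooth path $u:[a,b]\to K$ starting at $1\in K$; since $K$ fixes $p$, the modified path $\Gamma_\varepsilon=\Gamma u$ is another lift of $\gamma$ starting at $g$. The key computation: if $\zeta=\dot u u^{-1}\in\lie(K)$ (which makes sense because $K$ is regular), then the product rule together with the commutation of left and right translations on $G$ gives
\[
\dot{\Gamma_\varepsilon}(t)=\dot\Gamma(t)u(t)+\Gamma(t)\zeta(t)u(t)=(R_{u(t)})_*\bigl(\dot\Gamma(t)+\Gamma(t)\zeta(t)\bigr),
\]
and the $K$-right-invariance of $\mu$ then yields $|\dot\Gamma_\varepsilon(t)|_{\Gamma_\varepsilon(t)}=|\dot\Gamma(t)+\Gamma(t)\zeta(t)|_{\Gamma(t)}$. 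Since $-\lie(K)=\lie(K)$, the problem reduces to choosing a smooth $\zeta:[a,b]\to\lie(K)$ with
\[
\int_a^b|\dot\Gamma(t)+\Gamma(t)\zeta(t)|_{\Gamma(t)}\,dt\le L_M(\gamma)+\varepsilon,
\]
for then regularity of $K$ (Remark \ref{umenos1}.2) produces a smooth $u:[a,b]\to K$ with $u(a)=1$ and $\dot u u^{-1}=\zeta$, and $\Gamma_\varepsilon=\Gamma u$ is the required lift. For ease of exposition I assume $\Gamma$ is $C^1$ throughout; the general piecewise smooth case follows by running the argument on each smooth piece and concatenating.

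To construct $\zeta$ I use a compactness and smoothing argument. By definition of the quotient norm, for each $t_0\in[a,b]$ there is some $\zeta_{t_0}\in\lie(K)$ with $|\dot\Gamma(t_0)+\Gamma(t_0)\zeta_{t_0}|_{\Gamma(t_0)}<\mu(\dot\gamma_{t_0})_{\gamma_{t_0}}+\varepsilon/(4(b-a))$. The function $t\mapsto\mu(\dot\gamma_t)_{\gamma_t}$ is continuous by Lemma \ref{mabajo}, and $t\mapsto|\dot\Gamma(t)+\Gamma(t)\zeta_{t_0}|_{\Gamma(t)}$ is continuous by continuity of $\mu$ along paths and the $C^1$-smoothness of the group operations; hence the estimate persists on a neighborhood $U_{t_0}\ni t_0$ with $\varepsilon/(2(b-a))$ on the right. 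A finite subcover of $[a,b]$ then furnishes a partition $a=s_0<s_1<\cdots<s_n=b$ and constants $\zeta_i\in\lie(K)$ such that $|\dot\Gamma(t)+\Gamma(t)\zeta_i|_{\Gamma(t)}<\mu(\dot\gamma_t)_{\gamma_t}+\varepsilon/(2(b-a))$ throughout $[s_{i-1},s_i]$.

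This piecewise-constant selection is not smooth, so I would smooth it by fixing a small transition width $\eta>0$ and setting $\zeta\equiv\zeta_i$ on $[s_{i-1}+\eta,s_i-\eta]$, while interpolating as $(1-\rho_i(t))\zeta_i+\rho_i(t)\zeta_{i+1}$ on each transition interval $[s_i-\eta,s_i+\eta]$, with $\rho_i$ a smooth monotone bump from $0$ to $1$. Since $\lie(K)$ is a linear subspace, these interpolants remain in $\lie(K)$. Outside the transition intervals the previous bound contributes at most $L_M(\gamma)+\varepsilon/2$ to $\LG(\Gamma_\varepsilon)$. On the transition intervals the $L$-uniformity of $\mu$ gives the crude bound
\[
|\dot\Gamma(t)+\Gamma(t)\zeta(t)|_{\Gamma(t)}\le\sup_s|\dot\Gamma(s)|_{\Gamma(s)}+\sup_s L(\Gamma(s))\cdot\max_i|\zeta_i|=:C,
\]
where $C$ is finite because $L$ is upper semi-continuous on the compact image $\Gamma([a,b])$. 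Choosing $\eta$ so that the total transition length is at most $\varepsilon/(2C)$ bounds this contribution by $\varepsilon/2$, and altogether $\LG(\Gamma_\varepsilon)\le L_M(\gamma)+\varepsilon$.

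The main obstacle is precisely this smooth selection step: the infimum defining $\mu(\dot\gamma_t)_{\gamma_t}$ need not be attained, and even if it were there is no canonical continuous (let alone smooth) selector $t\mapsto\zeta(t)$. The partition-plus-interpolation procedure circumvents this, but depends essentially on $L$-uniformity to control the length contribution from the transition regions; without that hypothesis one would need a more delicate approximation scheme.
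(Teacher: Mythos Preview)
Your argument is correct and follows the same overall strategy as the paper: take any lift $\Lambda$ (your $\Gamma$), right-multiply by a path $u\subset K$ produced via regularity from a carefully chosen $\zeta:[a,b]\to\lie(K)$, and use right-$K$-invariance to compute the resulting speed. The only genuine difference is in how the approximating $\zeta$ is built. The paper picks near-optimal vectors $w_i\in\lie(K)$ at partition nodes $t_i$ and then takes $w_t$ to be the \emph{piecewise linear interpolant}; convexity of $|\cdot|_{\Lambda_t}$ then gives $|\dot\Lambda_t-\Lambda_t w_t|_{\Lambda_t}\le a_t|\dot\Lambda_t-\Lambda_t w_i|_{\Lambda_t}+b_t|\dot\Lambda_t-\Lambda_t w_{i+1}|_{\Lambda_t}$, and $L$-uniformity (applied to $L(\Lambda_t\Lambda_i^{-1})$, $L(\Lambda_t\Lambda_{i+1}^{-1})$) transfers these back to the endpoints where the $w_i$ were chosen. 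Your version instead keeps $\zeta$ piecewise constant on most of the interval and patches with bump functions on short transition zones, bounding those zones crudely via $L(\Gamma(s))$ and shrinking their total length. Both uses of $L$-uniformity are legitimate; the paper's convexity trick yields a uniform pointwise bound $|\dot\Gamma_\varepsilon(t)|_{\Gamma_\varepsilon(t)}<\mu(\dot\gamma_t)_{\gamma_t}+\varepsilon$ (slightly stronger than the integral bound you need), while your approach is arguably more transparent and sidesteps the somewhat delicate estimate $|\Lambda_s\Lambda_t^{-1}\dot\Lambda_t-\dot\Lambda_s|_{\Lambda_s}<r$.
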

\begin{proof}
Let $\Lambda:[a,b]\to G$ be any piecewise smooth lift of $\gamma$, starting at $g\in G$. Let $C\ge 1$ be such that $\mu(\dot{\gamma}_t)_{\gamma_t}\le C$ for all $t\in [a,b]$. Given $\varepsilon>0$, pick  $r>0$ such that $ 3r +r (3r +C)<\varepsilon$. Let $\delta>0$ be such that 
$$
|\mu(\dot{\gamma}_t)_{\gamma_t}-\mu(\dot{\gamma}_s)_{\gamma_s}|<r,\quad 
|\Lambda_s\Lambda_t^{-1}\dot{\Lambda}_t-\dot{\Lambda}_s|_{\Lambda_s}<r,\quad \quad L(\Lambda_t\Lambda_s^{-1})-1<r
$$
if $|s-t|<\delta$. Take a $\delta$-partition $\pi=\{a=t_0<t_1<\dots<t_n=b\}$ of $[a,b]$, that is $t_{i+1}-t_i<\delta$. Denote $\Lambda_i=\Lambda(t_i)$, $\gamma_i=\gamma(t_i)$ etc. and for each $i$, pick $w_i\in \lie(K)$ such that 
$$
|\dot{\Lambda}_i-\Lambda_i w_i|_{\Lambda_i}< \mu(\dot{\gamma}_i)_{\gamma_i}+r.
$$
Let $w:[a,b]\to \lie(K)$ be a polygonal path joining $\{w_i\}_{i=0,\dots,n}$ in their given order, more precisely let $w|_{[t_i,t_{i+1}]}$ be denoted by $w_t=a_t w_i+ b_t w_{i+1}$, with $0\le a_t,b_t\le 1$ and $a_t+b_t=1$ (i.e. we write $a_t=(t_{i+1}-t)(t_{i+1}-t_i)^{-1}$ and $b_t=(t-t_i)(t_{i+1}-t_i)^{-1}$ for the coefficients of the convex combination).

Let $u:[a,b]\to K$ be a smooth lift of $-w$ with $u(a)=1$, that is $\dot{u}u^{-1}=-w$ (Remark \ref{umenos1}.2). Consider $\Gamma_{\varepsilon}=\Lambda u$. Clearly $\Gamma_{\varepsilon}\cdot p=\Lambda u \cdot p=\Lambda\cdot p=\gamma$ since $u\subset K$. Hence $\Gamma_{\varepsilon}$ is piecewise smooth lift of $\gamma$, and clearly $\Gamma_{\varepsilon}(a)=g$. Now differentiating we obtain $\dot{\Gamma}_{\varepsilon}=\dot{\Lambda}u+\Lambda\dot{u}$, and if $t\in (t_i,t_{i+1})$, by the right $K$-invariance of $\mu$,
\begin{align*}
|\dot{\Gamma}_{\varepsilon}(t)|_{\Gamma_{\varepsilon}(t)}  & = |\dot{\Lambda}_t u_t +\Lambda_t \dot{u}_t|_{\Lambda_t u_t} = |\dot{\Lambda}_t+ \Lambda_t \dot{u}_tu_t^{-1}|_{\Lambda_t}= |\dot{\Lambda}_t -\Lambda_t w_t|_{\Lambda_t}\\
& =  |(a_t+b_t) \dot{\Lambda}_t -\Lambda_t (a_t w_i+ b_t w_{i+1})|_{\Lambda_t}\le a_t| \dot{\Lambda}_t -\Lambda_t  w_i|_{\Lambda_t}+ b_t| \dot{\Lambda}_t -\Lambda_t  w_{i+1}|_{\Lambda_t}\\
 & \le a_t L(\Lambda_t\Lambda_i^{-1})|\Lambda_i\Lambda_t^{-1}\dot{\Lambda}_t-\Lambda_i w_i|_{\Lambda_i}+b_t L(\Lambda_t\Lambda_{i+1}^{-1})|\Lambda_{i+1}\Lambda_t^{-1}\dot{\Lambda}_t-\Lambda_{i+1}w_{i+1}|_{\Lambda_{i+1}}\\
 & \le (1+r) (a_t |\Lambda_i\Lambda_t^{-1}\dot{\Lambda}_t-\Lambda_i w_i|_{\Lambda_i}+b_t |\Lambda_{i+1}\Lambda_t^{-1}\dot{\Lambda}_t-\Lambda_{i+1}w_{i+1}|_{\Lambda_{i+1}}).
\end{align*}
We now compute
$$
|\Lambda_i\Lambda_t^{-1}\dot{\Lambda}_t-\Lambda_i w_i|_{\Lambda_i}\le |\Lambda_i\Lambda_t^{-1}\dot{\Lambda}_t-\dot{\Lambda_i}|_{\Lambda_i}+ |\dot{\Lambda}_i-\Lambda_i w_i|_{\Lambda_i}\le r + \mu(\dot{\gamma}_i)_{\gamma_i} +r\le 3r +\mu(\dot{\gamma}_t)_{\gamma_t},
$$
and likewise 
$$
|\Lambda_{i+1}\Lambda_t^{-1}\dot{\Lambda}_t-\Lambda_{i+1} w_{i+1}|_{\Lambda_{i+1}}\le 3r +  \mu(\dot{\gamma}_t)_{\gamma_t}.
$$
We also note that both terms are bounded by $3r+C$. Therefore
\begin{align*}
|\dot{\Gamma}_{\varepsilon}(t)|_{\Gamma_{\varepsilon}(t)} & \le a_t(3r +  \mu(\dot{\gamma}_t)_{\gamma_t})+ b_t (3r +  \mu(\dot{\gamma}_t)_{\gamma_t})+ r a_t(3r +C) +r b_t(3r +C)\\
& = 3r + \mu(\dot{\gamma}_t)_{\gamma_t} + r (3r +C)=\mu(\dot{\gamma}_t)_{\gamma_t} + 3r +r (3r +C).
\end{align*}
Thus $|\dot{\Gamma}_{\varepsilon}(t)|_{\Gamma_{\varepsilon}(t)}<\varepsilon +\mu(\dot{\gamma_t})_{\gamma_t}$ for $t\in [t_i,t_{i+1}]$. Integrating in that interval, we obtain
$$
\int_{t_i}^{t_{i+1}} |\dot{\Gamma_{\varepsilon}}(t)|_{\Gamma_{\varepsilon}(t)}dt < \varepsilon(t_{i+1}-t_i)+ \int_{t_i}^{t_{i+1}} \mu(\dot{\gamma}_t)_{\gamma_t}dt,
$$
and summing over $i$, we arrive to $\LG(\Gamma_{\varepsilon})< \varepsilon(b-a) +L_M(\gamma)$.
\end{proof}

As a corollary of the existence of almost isometric lifts, we can now state the main theorem of this section.

\begin{teo}\label{elcorodelq}
Let $G,K, M\simeq G/K$ be as in Definition \ref{smoothm} and $\mu$ a $L$-uniform Finsler metric in $G$. Then 
\begin{enumerate}
\item $d_M(x,y)= \dot{d}(x,y)$ for all $x,y\in M$; equivalently if $K=K_{p}$ then
\item $d_M(g\cdot p,h\cdot p)=\dg(g,hK)$ for any $g,h\in G$.
\item If $(G,\dg)$ is complete, then $(M,d_M)$ is complete. 
\item If $\dg$ is a metric in $G$ (i.e. non-degenerate), then $d_M$ is a metric in $M$.
\item If $\pi_*:TG\to TM$ is quotient, then $\mu:TM\to\mathbb R$  and $d_M:M\times M\to\mathbb R$ are continuous for the original manifold topology $\tau_M$.
\end{enumerate}
\end{teo}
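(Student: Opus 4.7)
The plan is to assemble this theorem as a straightforward consequence of the machinery developed earlier in the section; almost nothing new needs to be proved, only the right earlier results need to be chained together. I would present the five parts in order, with Parts (1)--(4) essentially quoting theorems and Part (5) requiring the only genuine argument.

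\textbf{Parts (1) and (2).} Theorem \ref{igu} guarantees that, under the present hypotheses ($K$ regular, $\mu$ $L$-uniform), every piecewise smooth $\gamma\subset M$ admits an $\varepsilon$-isometric lift for every $\varepsilon>0$. Substituting this into Theorem \ref{sigual} yields $d_M=\dot d$ on $M\times M$, proving Part (1). Part (2) is nothing more than rewriting $\dot d(g\cdot p,h\cdot p)=d_\mu(gK,hK)$ by means of the right-$K$ invariance of $d_\mu$ already recorded in equation (\ref{dpunto}): $d_\mu(gK,hK)=d_\mu(g,hK)$.

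\textbf{Parts (3) and (4).} Part (3) follows by combining Part (1) with Theorem \ref{comple}: completeness of $(G,d_\mu)$ forces completeness of $(M,\dot d)=(M,d_M)$. For Part (4), if $d_\mu$ is non-degenerate on $G$, then the distance from the fixed point $g$ to the closed (in $\tau_\mu$) subset $hK$ is strictly positive whenever $gK\neq hK$, as noted in Remark \ref{notpseudo}; hence $\dot d$, and therefore $d_M$ by Part (1), is non-degenerate.

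\textbf{Part (5).} Continuity of $\mu:TM\to\mathbb R$ is the content of Lemma \ref{mabajo}(2), which is available precisely because we are assuming $\pi_*:TG\to TM$ is a quotient (hence open) map. For continuity of $d_M:M\times M\to \mathbb R$ with respect to $\tau_M$, I would mimic the proof of Proposition \ref{contigru}: by the triangle inequality it suffices to show $d_M(x_n,x)\to 0$ (and $d_M(x,x_n)\to 0$) whenever $x_n\to x$ in $\tau_M$. Using that $\pi:G\to M$ is a smooth submersion whose differential is a quotient map (Remark \ref{openq}), one can, in a chart around any preimage $g$ of $x$, construct a local smooth section $\sigma$ with $\sigma(x)=g$ and $\sigma(x_n)=:g_n\to g$ in $\tau_G$. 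Proposition \ref{contigru}(2) then gives $d_\mu(g,g_n)\to 0$, and the elementary comparison preceding Theorem \ref{sigual} (combining (\ref{desil}) and the lemma immediately after) gives $d_M(x_n,x)\le d_\mu(g_n,g)$, so $d_M(x_n,x)\to 0$. The reverse inequality $d_M(x,x_n)\to 0$ is obtained analogously (or, if $\mu$ is reversible, for free).

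\textbf{Main obstacle.} The only non-cosmetic point is Part (5): one must check that $\pi_*$ being a quotient map actually provides enough regularity for the local lifting of $\tau_M$-convergent sequences to $\tau_G$-convergent sequences used above. This is fine in the Banach setting via the implicit function theorem (Remark \ref{openq}), but in the general locally convex setting one has to be content with the hypothesis as stated. All other parts are direct invocations of previously proved results.
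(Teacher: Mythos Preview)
Your proposal is correct and matches the paper's own proof almost exactly: the paper likewise derives (1)--(2) from Theorems \ref{sigual} and \ref{igu}, (3) from Theorem \ref{comple}, (4) from right $K$-invariance, and (5) from Lemma \ref{mabajo} together with $d_M=\dot d$. Your treatment of Part (5) is in fact more explicit than the paper's one-line sketch, but the underlying argument is the same.
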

\begin{proof}
The first two items follow from  Theorems \ref{sigual} and \ref{igu}.  The third assertion then follows from Theorem \ref{comple}. The fourth assertion is immediate from the right invariance for $K$. The final assertion follows from Lemma \ref{arribajo}, combined with the fact $d_M=\dot{d}$.
\end{proof}

\subsection{Geodesics}\label{geodesics}

Recall that a lift $\Gamma\subset G$ of $\gamma\subset M$ is \textit{isometric} if $\LG(\Gamma)=L_M(\gamma)$. In this section we examine geodesics of $TM$ (minimizing paths for the quotient distance $d_M$), which can be considered as the image of certain special geodesics in $TG$: minimizing paths in $G$ that are transverse to $K$.

\begin{defi}[Minimal lifts]\label{minilift}
We say that a vector $\dot{g}\in T_gG$ is \textit{minimal} if 
$$
|\dot{g}|_g=\textrm{dist}_{\mu_g}(\dot{g},g\lie(K))=\mathop{\inf}_{z\in \lie(K)}|\dot{g}-gz|_g.
$$
That is, minimal vectors in $TG$ are isometric lifts of vectors in $TM$.
\end{defi}

\begin{rem}
Note that there might not exists minimal lifts, and if $|\cdot|_g$ is not strictly convex, they might not be unique. In the Riemannian (or smooth Finsler setting), minimal lifts are usually called \textit{horizontal lifts}, see Durán and Paiva \cite{paiva}; we prefer to keep the term minimal since horizontal distributions are rare.
\end{rem}

\medskip

We now study the relationship among short paths in $G$ and in $M\simeq G/K$. 

\begin{teo}\label{arribajo2}
Let $g,h\in G$, and let $\Gamma:[a,b]\to G$ joining them. The following are equivalent
\begin{enumerate}
\item $\LG(\Gamma)=\dg(g,hK)$ (in particular $\Gamma$ is short in $G$ and the distance from $g$ to $hK$ is realized).
\item $\gamma=\Gamma\cdot p\subset M$ is a short path joining  $g\cdot p,h\cdot p$, and $\Gamma$ is an isometric lift of $\gamma$.
\end{enumerate}
\end{teo}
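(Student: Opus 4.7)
The proof will be essentially a diagram chase using three ingredients already in place: the equality of quotient distances $d_M=\dot{d}$ from Theorem~\ref{elcorodelq}, the inequality $L_M(\gamma)\le \LG(\Gamma)$ valid for every lift $\Gamma$ of $\gamma$ (see (\ref{desil})), and the right $K$-invariance of $\dg$ giving $\dg(gK,hK)=\dg(g,hK)$ from (\ref{dpunto}). The minimality definitions (short path, isometric lift, realizing the distance to a coset) will all collapse to equalities in a single chain of inequalities, and both implications will fall out by reading that chain in the two directions.

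\medskip

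\noindent\textbf{Direction $(1)\Rightarrow(2)$.} Assume $\LG(\Gamma)=\dg(g,hK)$. Since $\gamma=\Gamma\cdot p$ joins $g\cdot p$ and $h\cdot p$ in $M$, the definition of $d_M$ gives $d_M(g\cdot p, h\cdot p)\le L_M(\gamma)$. Combining with (\ref{desil}) and then with Theorem~\ref{elcorodelq}, I write
\[
d_M(g\cdot p, h\cdot p)\;\le\; L_M(\gamma)\;\le\; \LG(\Gamma)\;=\;\dg(g,hK)\;=\;\dot{d}(g\cdot p,h\cdot p)\;=\;d_M(g\cdot p,h\cdot p).
\]
Hence every inequality is an equality: $L_M(\gamma)=d_M(g\cdot p,h\cdot p)$ says that $\gamma$ is short in $M$, and $L_M(\gamma)=\LG(\Gamma)$ says that $\Gamma$ is an isometric lift, proving (2).

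\medskip

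\noindent\textbf{Direction $(2)\Rightarrow(1)$.} Assume $\gamma=\Gamma\cdot p$ is short in $M$ joining $g\cdot p$ and $h\cdot p$, and that $\Gamma$ is an isometric lift. Using in order the isometric-lift hypothesis, shortness of $\gamma$, Theorem~\ref{elcorodelq}, and the right $K$-invariance (\ref{dpunto}), I read
\[
\LG(\Gamma)\;=\;L_M(\gamma)\;=\;d_M(g\cdot p, h\cdot p)\;=\;\dot{d}(g\cdot p, h\cdot p)\;=\;\dg(gK,hK)\;=\;\dg(g,hK),
\]
which is (1). Note that the endpoint of $\Gamma$ being exactly $h$ (rather than some point of $hK$) makes the trivial inequality $\dg(g,hK)\le \dg(g,h)\le \LG(\Gamma)$ the one witnessing minimality in $G$.

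\medskip

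\noindent\textbf{Obstacles.} There is essentially no technical obstacle once Theorems \ref{elcorodelq} and \ref{igu} are in hand: the content of the present theorem is exactly that, because the infima defining $\dot{d}$ and $d_M$ coincide, any path $\Gamma$ whose length achieves $\dg(g,hK)$ must also achieve both infima at once. The only point requiring a little care is to make sure the right coset $hK$ (with $K$ the identity component of the isotropy) is the object appearing throughout, which is guaranteed because $\Gamma(b)=h\in hK$, so no monodromy issue with larger components of the full isotropy group interferes.
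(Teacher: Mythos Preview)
Your proof is correct and essentially follows the same approach as the paper: both directions are obtained by reading a chain of inequalities involving $L_M(\gamma)\le \LG(\Gamma)$ and the identification $d_M=\dot{d}=\dg(\cdot,\cdot K)$. The only cosmetic difference is that in the $(1)\Rightarrow(2)$ direction the paper re-derives shortness of $\gamma$ by directly invoking the $\varepsilon$-isometric lifts of Theorem~\ref{igu} (comparing $\gamma$ against an arbitrary competitor $\beta$), whereas you invoke the packaged equality $d_M=\dot{d}$ from Theorem~\ref{elcorodelq}; since the latter is itself a consequence of Theorem~\ref{igu} and is already established at this point, your shortcut is entirely legitimate and arguably cleaner.
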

\begin{proof}
Let us prove that $1\Rightarrow 2$. Let $\beta\subset M$ be any other path joining $g\cdot p,h\cdot p$, let $\varepsilon>0$ and take an $\varepsilon$-isometric lift $\Lambda_{\varepsilon}\subset G$ of $\beta$  starting at $g$ (Theorem \ref{igu}). Then $\Lambda_{\varepsilon}(b)=hu$ for some $u\in K$, therefore
$$
L_M(\gamma)\le \LG(\Gamma)=\dg(g,hK)\le \dg(g,hu)\le \LG(\Lambda_{\varepsilon})<L_M(\beta)+\varepsilon.
$$
This proves that $L_M(\gamma)\le L_M(\beta)$, therefore $\gamma$ is short. Now
$$
\LG(\Gamma)=\dg(g,hK)=\dot{d}(g\cdot p,h\cdot p)=d_M(g\cdot p,h\cdot p)=L_M(\gamma)
$$
because $\gamma$ is short in $M$, therefore $\Gamma$ is an isometric lift of $\gamma$.

We now prove that $2\Rightarrow 1$. Let $\varepsilon>0$ and pick $u\in K$ such that $\dg(g,hu)<\dg(g,hK)+\varepsilon$. Pick a path $\Lambda\subset G$ joining $g,hu$ such that $\LG(\Lambda)<\dg(g,hu)+\varepsilon$. Since $\Lambda(a)\cdot p=g\cdot p=\gamma(a)$ and $\Lambda(b)\cdot p=hu\cdot p=h\cdot p=\gamma(b)$,
$\Lambda\cdot p$ has the same endpoints than $\gamma$ in $M$, therefore
$$
\LG(\Gamma)=L_M(\gamma)\le L_M(\Lambda\cdot p)\le \LG(\Lambda)
$$
since $\gamma$ is short and we have inequality (\ref{lasnorm}). Then
$$
\LG(\Gamma)\le \LG(\Lambda)<\dg(g,hu)+\varepsilon<\dg(g,hK)+2\varepsilon.
$$
This proves that $\LG(\Gamma)\le\dg(g,h K)$, ant the other inequality  always holds.
\end{proof}

\begin{rem}\label{necemin}
If $\Gamma\subset G$ is as in the previous theorem, then \textit{$\Gamma$ is an isometric lift of $\gamma=\Gamma\cdot p$ restricted to any sub-interval $I\subset [a,b]$, and $\dot{\Gamma_t}\in T_{\Gamma_t}G$ is  a minimal vector for all $t\in [a,b]$}.  To prove it, it suffices to observe that
$$
\dg(\Gamma_t,h)=\dg(\Gamma_t,hK)\quad \textrm{ for all }t\in [a,b].
$$
Otherwise it would exists $u_0\in K$ such that $\dg(\Gamma_t,hu_0)<\dg(\Gamma_t,h)$. Now being minimizing, distance is additive along $\Gamma$, therefore
\begin{eqnarray}
\dg(g,hK) &\le & \dg(g,hu_0)\le \dg(g,\Gamma_t)+ \dg(\Gamma_t,hu_0)\nonumber\\
&<& \dg(g,\Gamma_t)+\dg(\Gamma_t,h)=\dg(g,h)=\dg(g,hK)\nonumber
\end{eqnarray}
a contradiction. Then 
$$
\LG(\Gamma)_t^1=\dg(\Gamma_t,h)=\dg(\Gamma_t,hK)=d_M(\gamma_t,h\cdot p)=L_M(\gamma)_t^1
$$
for all $t\in [a,b]$. Differentiating with respect to $t$ we obtain the claim
$$
|\dot{\Gamma}_t|_{\Gamma_t}=\mu(\dot{\gamma}_t)_{\gamma_t}=\mathop{\inf}_{ z\in \lie(K)} |\dot{\Gamma}_t-\Gamma_tz|_{\Gamma_t}\quad \forall \,t\in [a,b].
$$
\end{rem}

\begin{rem}\label{geococ}
For a minimizing path $\Gamma$ of $(G,\dg)$, it would be natural to conjecture that if $\dot{\Gamma}$ is a minimal vector along $\Gamma$, then $\gamma=\Gamma\cdot p$ is (at least locally) a minimizing path of $(M,d_M)$. As mentioned in the introduction of this paper, for Riemannian (or smooth Finsler metrics) this follows from the Euler equations (see for instance \cite[Theorem 3.1]{paiva}).

\medskip

$\S$ For certain actions of the unitary group $G=\mathcal U(H)$ of a Hilbert space (flag manifolds) this last minimality property was proved by Recht et. al in \cite{dmlr} -for the Finsler metric induced by the uniform norm-. Then in \cite{alr} we proved it for the $p$-Schatten norms of compact operators, and in \cite{acl} it was proved for the $p$-norms of finite von Neumann algebras.
\end{rem}

\section{Bi-invariant metrics}\label{biinvme}

In this section we take a closer look at the geometry of a Lie group $K\subset G$ with a metric which is  invariant for the adjoint action of $K$; the metric considered in this section is the left-invariant metric induced by the tangent $\Ad_K$-invariant Finsler metric (therefore it is a bi-invariant metric in $K$). Typically, one could consider these groups $K$ as generalizations of isometry groups. Their Lie algebras are analogous of compact Lie algebras in the finite dimensional setting, and are called \textit{elliptic} (see \cite[p. 344]{neeblie}).

\begin{rem}[Bi-invariant intrinsic metrics in locally compact groups]
It is worth recalling here that due to a result from Berestovskii, any locally compact, locally contractible topological group $K$ with a bi-invariant intrinsic metric $d$ (i.e. a $(K,d)$ is a \textit{length space}), is a Lie group with a bi-invariant Finsler metric (see \cite[Theorem 7]{beres}). Thus the results of this section imply that one-parameter groups are short paths for the intrisic metric of $K$.
\end{rem}

In what's left of this section we assume that the Lie group $K$ has a smooth (at least $C^2$) exponential map. Thus for fixed $v\in\lie(G)$, $w\in \lie(K)$, the path $f(s)=e^{s \ad w}v=\Ad_{e^{sw}}v$ is smooth.
\medskip

$\S$ In classical Finsler geometry, where the norm $\mu$ is twice differentiable and its Hessian $g_{\mu}=D^2\mu^2$ is positive definite, there is a nice characterization of \textit{geodesic vectors}, that is, vectors such that $t\mapsto e^{tw}$ is a geodesic of the auxiliary metric $g_{\mu}$ of the Finsler metric in $G$. It was obtained by Latifi \cite{latifi}, and it states that $w\in\lie(G)$ is geodesic if and only if $g_{\mu}(\ad w(v),v)=0$ for all $v\in \lie(G)$. It generalizes the well-known fact that for a Riemannian bi-invariant metric on a Lie group,  $\ad w$ is skew-adjoint for the metric $g_{\mu}$ (see Milnor \cite[Section 7]{milcur}). In what follows we extend this result to bi-invariant semi-norms.

\begin{defi}[Norming functionals]\label{adop} We denote $E_{\mu}=(\lie(G),\mu)$  the linear space $\lie(G)$ with the topology induced by $\mu$. When $|\cdot|$ is degenerate, we denote $\tilde{v}$ to the class of $v$ in the quotient $\lie(G)/N$ by the closed subspace $N=\{z\in\lie(G): |z|=0\}$; then $\lie(G)/N$ has a non-degenerate quotient Finsler norm. When $\varphi$ is $\mathbb R$-linear and continuous in $E_{\mu}$ (that is, there exists $c>0$ such that $|\varphi(z)|\le c \mu(z)$ for all $z\in \lie(G)$), it must be $\varphi|_N=0$.

Let us denote
$$
E_{\mu}^*=\{\varphi:\lie(G)\to\mathbb R\,  \textrm{ s.t. } \varphi|_N\equiv 0, \, \varphi \textrm{ is linear and continuous for }\mu\}.
$$
We also we denote $\|\varphi\|=\sup\{|\varphi(v)|: |v|=1\}$ for $\varphi\in E_{\mu}^*$. We say that $\varphi\in E_{\mu}^*$ is a \textit{norming functional} of $v\in\lie(G)$ if $\varphi(v)=|v|$; by the Hahn-Banach theorem each vector $v$ admits at least one \textit{unit norm} norming functional, we shall denote it by $\varphi_v$. 
\end{defi}

Since $|v|\le |v-z|+|z|=|v-z|\le |v|+|z|=|z|$ when $z\in N$, it is apparent that $|v|=\|\tilde{v}\|:=\inf\{|v-z|:z\in N\}$ and there is a natural isometric identification
$$
\left(\lie(G)/N\right)^*\simeq E_{\mu}^*,
$$
given by $\varphi(v)=\tilde{\varphi}(\tilde{v})$. 

\begin{prop}\label{diss}
Let $\mu=|\cdot|$ be an $\Ad_K$-invariant semi-norm in $\lie(G)$, let $v\in \lie(G)$. Then
\begin{enumerate}
\item If $w\in \lie(K)$ and $v\in N$, then $\ad w(v)=[w,v]\in N$.
\item $N\cap\lie(K)$ is a closed stable ideal of $\lie(K)$.
\item  If $w\in \lie(K)$ then for each norming functional $\varphi$ of $v$ (or of $-v$), we have $\varphi([w,v])=0$. Equivalently, $\varphi \circ \ad v\equiv 0$ on $\lie(K)$ when $\varphi(\pm v)=\pm |v|$.
\end{enumerate}
\end{prop}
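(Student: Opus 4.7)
The plan is to exploit $\Ad_K$-invariance of $\mu$ at the infinitesimal level: for every $w\in\lie(K)$ we have $e^{sw}\in K$, hence $|\Ad_{e^{sw}}z|=|z|$ for all $z\in\lie(G)$ and all $s\in\mathbb R$. Differentiating at $s=0$ produces $\ad w(z)=[w,z]$ (Remark \ref{noad}), so the three items will follow by applying this one-parameter group of $\mu$-isometries in three different ways.

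For item 1, I would take $v\in N$ and consider the smooth curve $c(s)=\Ad_{e^{sw}}v$. Since $\mu(c(s))=\mu(v)=0$, the whole curve sits inside $N$. Continuity of $\mu$ says $N=\mu^{-1}(\{0\})$ is closed in $\lie(G)$, and linearity of $N$ means that all difference quotients $s^{-1}(c(s)-c(0))$ lie in $N$ as well, so their limit $\dot c(0)=[w,v]$ lies in the closure of $N$, which is $N$ itself. Item 2 is then immediate: $N\cap \lie(K)$ is closed, being the intersection of the two closed subspaces $N$ and $\lie(K)$ of $\lie(G)$ (closedness of $\lie(K)$ comes from Remark \ref{umenos1}); if $z\in N\cap\lie(K)$ and $w\in\lie(K)$, then $[w,z]\in N$ by item 1 and $[w,z]\in\lie(K)$ because $\lie(K)$ is a Lie subalgebra; stability under $\Ad_u$, $u\in K$, follows from $\Ad_u(N)\subset N$ (by $\Ad_K$-invariance of $\mu$) together with $\Ad_u(\lie(K))=\lie(K)$ from Remark \ref{adust}.

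For item 3, fix a norming functional $\varphi$ of $v$; by Hahn-Banach we may as well take $\|\varphi\|=1$, and any other norming functional is a positive scalar multiple of such, which does not affect the vanishing of $\varphi\circ\ad w$. Define $f:\mathbb R\to\mathbb R$ by $f(s)=\varphi(\Ad_{e^{sw}}v)$. Since $\varphi\in E_\mu^*$ is continuous on $(\lie(G),\mu)$ and $\mu$ is continuous on the l.c.s.\ $\lie(G)$, $\varphi$ is continuous in the ambient topology, so composing with the smooth curve $s\mapsto \Ad_{e^{sw}}v$ yields a differentiable function with
$$
f'(0)=\varphi\bigl(\ad w(v)\bigr)=\varphi([w,v]).
$$
On the other hand, for every $s\in\mathbb R$,
$$
|f(s)|\le \|\varphi\|\,|\Ad_{e^{sw}}v|=|v|=f(0),
$$
so $f$ attains its maximum at $s=0$ and therefore $f'(0)=0$, giving $\varphi([w,v])=0$. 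The case $\varphi(-v)=|-v|$ is identical after replacing $v$ by $-v$ and using $[w,-v]=-[w,v]$; the final equivalence with $\varphi\circ\ad v\equiv 0$ on $\lie(K)$ is just the identity $\ad v(w)=-\ad w(v)$.

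The only delicate step is the interchange of $\varphi$ with the derivative of $s\mapsto\Ad_{e^{sw}}v$ in the ambient topology of $\lie(G)$. This rests on two observations that are already in place: Remark \ref{noad} identifies the derivative of $\Ad\circ\exp$ at $0$ with $\ad$ in the l.c.s.\ topology, and the fact that $\mu$ is a continuous seminorm on $\lie(G)$ promotes any $\mu$-continuous linear functional to an l.c.s.-continuous one. Once these two facts are accepted, everything reduces to the single statement that $\Ad_{e^{sw}}$ is a $\mu$-isometry.
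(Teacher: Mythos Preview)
Your proof is correct and follows essentially the same route as the paper: both arguments use that $s\mapsto\Ad_{e^{sw}}v$ is a smooth curve of constant $\mu$-length, so for item~1 its derivative at $s=0$ stays in the closed subspace $N$, and for item~3 the function $s\mapsto\varphi(\Ad_{e^{sw}}v)$ has a maximum at $s=0$, forcing $\varphi([w,v])=0$. The only slip is the claim that ``any other norming functional is a positive scalar multiple'' of a unit one; this is false in general (norming functionals are not unique when the norm is not smooth), but it is also unnecessary: the reduction to $\|\varphi\|=1$ is simply the convention of Definition~\ref{adop}, and your argument for that case is complete.
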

\begin{proof}
 If $w\in \lie(K)$ then $|e^{\ad w}v|=|v|$ for all $v\in \lie(G)$, in particular if $v\in N$ this shows that $N$ is stable. Moreover $\alpha(t)=e^{t\ad w}v\subset N$ therefore $e^{\ad w}(v)\in N$ and also $[w,v]=\ad w(v)=\dot{\alpha}(0)\in N$, and this proves the first assertion. The second assertion is immediate from the first one. Consider the expansion in $\lie(G)$ 
$$
\Ad_{e^{sw}}v=e^{s \ad  w}v=v+s \, \ad  w(v)+o(s^2),
$$
which gives $e^{s \ad  w} v-v=s[w,v]+o(s^2)$. For each (unit norm) norming functional $\varphi$ of $v$
$$
\varphi(e^{s\ad w}v)-\varphi(v)\le |e^{s\ad w}v|-|v|= 0.
$$
Note that the convergence of the series is in the original topology of the lie algebra of $K$, thus the assumption on the continuity of $\mu$ (and hence of any $\varphi\in E_{\mu}^*$), allows the folllowing: divide by $s>0$ and make $s\to 0^+$, to obtain $\varphi([w,v])\le 0$. Replacing $w$ with $-w$ proves that $\varphi([w,v])=0$. The proof when $-\varphi(v)=|-v|$ is similar and therefore omitted.
\end{proof}

\begin{rem}
If $\varphi$ is a norming functional for $v\in\lie(G)$ and $w\in\lie(K)$, then 
$$
|v|=\varphi(v)=\varphi(v-\ad(w) v)\le |v-\ad (w)(v)|.
$$
In particular $1-\ad w$ is injective for each $w\in \lie(K)$. 
\end{rem}

The next little lemma contains essentially the same information as the Gauss' Lemma of Riemannian geometry: the differential of the exponential map along a geodesic preserves angles with the geodesic speed vector.

\begin{lem}[Gauss' Lemma]\label{normiexp}
Let $v,w\in\lie(K)$ and  $\varphi$ be a norming functional for $w$ of $-w$. Then
\begin{enumerate}
\item  $\varphi(e^{\lambda \ad  w}v)=\varphi(v)$ for all $\lambda\in\mathbb R$,
\item  $\varphi( e^{-w}\exp_{*w} v)=\varphi(v)$.
\end{enumerate}
\end{lem}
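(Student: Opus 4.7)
The plan is to prove the two parts in order, with the first part being the analytic core and the second part a routine corollary via Remark \ref{difexp}.

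For part (1), I would fix $v, w \in \lie(K)$ and consider the smooth real-valued function
$$
f(\lambda) = \varphi(e^{\lambda\,\ad w}v) = \varphi(\Ad_{e^{\lambda w}}v),
$$
whose smoothness follows from the smoothness of $s\mapsto \Ad_{e^{sw}}v$ (noted at the start of the section) combined with the $\mu$-continuity and $\mathbb R$-linearity of $\varphi$. Differentiating, and using $\tfrac{d}{d\lambda}\Ad_{e^{\lambda w}}v = [w, e^{\lambda\,\ad w}v]$, I obtain
$$
f'(\lambda) = \varphi\bigl([w, e^{\lambda\,\ad w}v]\bigr) = -\varphi\bigl([e^{\lambda\,\ad w}v,\, w]\bigr),
$$
by anti-symmetry of the bracket. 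The key observation is that $u_\lambda := e^{\lambda\,\ad w}v = \Ad_{e^{\lambda w}}v$ lies in $\lie(K)$ because $e^{\lambda w}\in K$ and $\lie(K)$ is $\Ad_K$-stable (Remark \ref{adust}). So in the equality $f'(\lambda) = -\varphi([u_\lambda, w])$ I can invoke Proposition \ref{diss}(3), playing the roles there as $u_\lambda\in\lie(K)$ in the slot of ``$w$'' and $w$ in the slot of ``$v$'' (for which $\varphi$ is the norming functional by hypothesis). This yields $f'(\lambda)\equiv 0$, hence $f(\lambda)=f(0)=\varphi(v)$ for every $\lambda\in\mathbb R$.

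For part (2), I would invoke Remark \ref{difexp} to write
$$
e^{-w}\exp_{*w}(v) = \int_0^1 e^{-s\,\ad w}v\,ds,
$$
where the integral converges in $\lie(G)$ (since $\lie(K)$ is a closed subspace and the integrand is continuous in $s$ with values in $\lie(K)$). Since $\varphi\in E_\mu^*$ is continuous and linear, it commutes with the Bochner-type integral:
$$
\varphi\bigl(e^{-w}\exp_{*w}(v)\bigr) = \int_0^1 \varphi(e^{-s\,\ad w}v)\,ds = \int_0^1 \varphi(v)\,ds = \varphi(v),
$$
where the middle equality applies part (1) at $\lambda = -s$. The case $\varphi(-w)=|-w|$ is handled identically since Proposition \ref{diss}(3) accommodates both sign conventions.

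The only delicate step is the one flagged above: one must correctly match the bracket $[w,u_\lambda]$ against the statement of Proposition \ref{diss}(3), which requires noticing that it is $u_\lambda$ (not $v$) which belongs to $\lie(K)$ for the purposes of the proposition, and that $\varphi$ norms $w$, which therefore plays the role of the vector being normed. Once this matching is made, together with $\Ad_K$-invariance of $\lie(K)$, the derivative vanishes immediately and the rest is bookkeeping.
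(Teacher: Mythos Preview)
Your proof is correct and follows essentially the same route as the paper: define $f(\lambda)=\varphi(e^{\lambda\,\ad w}v)$, differentiate, kill the derivative via Proposition \ref{diss}(3), then integrate the formula from Remark \ref{difexp} for part (2). You are in fact more careful than the paper in spelling out the role-swap (using anti-symmetry of the bracket and the fact that $u_\lambda=\Ad_{e^{\lambda w}}v\in\lie(K)$ so that Proposition \ref{diss}(3) applies with $u_\lambda$ as the $\lie(K)$-element and $w$ as the normed vector); the paper compresses this into ``by the previous proposition''. One cosmetic point: your citation of Remark \ref{adust} for $\Ad_K$-stability of $\lie(K)$ points to the homogeneous-space setting, whereas here $K$ is just a Lie group and $\Ad_k(\lie(K))=\lie(K)$ is immediate from the definition of the adjoint representation---no reference needed.
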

\begin{proof}
The functional $\varphi$ is also continuous for the locally convex topology of $\lie(G)$ (since $\mu$ is continuous), therefore the map $f(\lambda)=\varphi(e^{\lambda \ad  w}v)$ is smooth (Remark \ref{noad}). Moreover  $f'(\lambda)=\varphi([ w,e^{\lambda \ad  w}v])=0$ by the previous proposition. Since $f(0)=\varphi(v)$ we obtain the first assertion. Using Remark \ref{difexp}, we have
\begin{eqnarray}
\varphi( e^{-w}\exp_{*w} v) & = & \varphi\left(\int_0^1 e^{-s\,\ad  w}v ds\right)\nonumber\\
&= &\int_0^1 \varphi(e^{-s \,\ad  w}v)ds=\int_0^1 \varphi(v)=\varphi(v).\qedhere
\nonumber
\end{eqnarray}
\end{proof}

Since they will play a fundamental role in bi-invariant metrics, let's define segments and polygonal paths:

\begin{defi}\label{defiseg}
A path $\delta$ in $K$ is a \textit{segment} if it is a left translation of a one-parameter group, i.e. $\delta(t)=ue^{tz}$. We say that $\delta$ \textit{polygonal path} if it is a continuous concatenation of segments.
\end{defi}

Now we can establish the main result of this section. 

\begin{lem}\label{gauss} Let $w:[a,b]\to \lie(K)$ be a piecewise $C^1$ path, let $\gamma=e^w$. Then
$$
|w(b)|-|w(a)|\le \int_a^b |\gamma_t^{-1}\dot{\gamma}_t|\, dt =\LG(\gamma).
$$
\end{lem}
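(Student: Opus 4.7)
The strategy is to reduce the inequality to a pointwise differential estimate: show that $f(t):=|w(t)|$ satisfies $f'(t)\le |\gamma_t^{-1}\dot\gamma_t|$ almost everywhere on $[a,b]$, and then invoke the fundamental theorem of calculus. First I would observe that, since $w$ is piecewise $C^1$ on the compact interval $[a,b]$, sub-additivity of $\mu$ gives $|f(t+h)-f(t)|\le |w(t+h)-w(t)|\le \int_t^{t+h}|\dot{w}(r)|\,dr$, and the integrand is bounded on $[a,b]$, so $f$ is Lipschitz, hence absolutely continuous, and therefore $f(b)-f(a)=\int_a^b f'(t)\,dt$.

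The core step is the pointwise bound at a point $t$ of differentiability of $f$. My plan is to use the subdifferential identity
$$f'_+(t)=\max\{\varphi(\dot w(t)):\varphi\in E_\mu^{*},\,\|\varphi\|=1,\,\varphi(w(t))=|w(t)|\},$$
which is the standard formula for the right derivative of a norm along a curve. This is obtained via Hahn--Banach together with a Banach--Alaoglu weak-$*$ compactness argument, applied in the normed quotient $\lie(G)/N$ whose dual is isometric to the space $E_\mu^{*}$ of Definition \ref{adop}. Once the identity is in hand, for any norming functional $\varphi$ of $w(t)$ I can invoke Gauss' Lemma \ref{normiexp}(2) to get
$$\varphi(\dot w(t))=\varphi(e^{-w(t)}\exp_{*w(t)}\dot w(t))=\varphi(\gamma_t^{-1}\dot\gamma_t)\le|\gamma_t^{-1}\dot\gamma_t|,$$
using $\|\varphi\|=1$. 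Taking the maximum over such $\varphi$ yields $f'(t)=f'_+(t)\le|\gamma_t^{-1}\dot\gamma_t|$ at almost every $t$.

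Integrating this inequality on $[a,b]$ and combining with the FTC from the first step gives
$$|w(b)|-|w(a)|=f(b)-f(a)=\int_a^b f'(t)\,dt\le\int_a^b|\gamma_t^{-1}\dot\gamma_t|\,dt=\LG(\gamma),$$
as required. The one technical obstacle I anticipate is justifying the subdifferential identity for continuous semi-norms on a locally convex space; however, passing to the normed quotient $\lie(G)/N$ (where Hahn--Banach and Banach--Alaoglu both apply, since the unit ball of $E_\mu^{*}$ is weak-$*$ compact) reduces the question to the classical Banach-space result. Everything else is a direct consequence of Gauss' Lemma and the $\Ad_K$-invariance of $\mu$.
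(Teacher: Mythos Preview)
Your approach is correct and rests on the same key step as the paper's: if $\varphi$ is a unit-norm functional norming $w(t)$, then Gauss' Lemma~\ref{normiexp}(2) gives $\varphi(\dot w(t))=\varphi(\gamma_t^{-1}\dot\gamma_t)\le|\gamma_t^{-1}\dot\gamma_t|$. The packaging, however, is genuinely different. You show $f(t)=|w(t)|$ is Lipschitz, invoke the convex-analytic formula $f'_+(t)=\max\{\varphi(\dot w_t):\varphi\text{ norms }w_t\}$ (obtained via Hahn--Banach plus weak-$*$ compactness of the unit ball of $E_\mu^*$), and integrate the pointwise estimate directly. The paper instead runs a discrete telescoping argument: it picks a single norming functional $\varphi_i$ at each partition node $t_i$, uses the first-order Taylor expansion $w_{i-1}=w_i-\dot w_i\Delta_i+o(\Delta_i^2)$ to bound $|w_i|-|w_{i-1}|\le\varphi_i(\dot w_i)\Delta_i+o(\Delta_i^2)$, sums, and refines. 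Because of the $o(\Delta_i^2)$ term this forces an initial $C^2$ assumption on $w$, followed by a polygonal approximation to reach piecewise $C^1$. Your route is cleaner in that it applies directly to piecewise $C^1$ paths with no approximation step; the paper's route is more self-contained, needing only Hahn--Banach and no Banach--Alaoglu. One small caution: since $\mu$ is allowed to be only positively homogeneous, the subdifferential identity should be stated for functionals $\varphi$ dominated by $\mu$ (i.e.\ $\varphi\le\mu$) rather than ``unit norm'' in the symmetric sense; this is exactly what Hahn--Banach delivers and is all you use in the final inequality $\varphi(\gamma_t^{-1}\dot\gamma_t)\le|\gamma_t^{-1}\dot\gamma_t|$, so the argument goes through unchanged.
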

\begin{proof}
We start by assuming that the path $w$ is of class $C^2$. Note that for each $t$ we have  $\gamma^{-1}(t)\dot{\gamma}(t)=e^{-w(t)}\exp_{*w(t)}\dot{w}(t)$. Denote $w_t=w(t)$, $\dot{w}_t=\dot{w}(t)$, and for each $t\in [a,b]$ pick $\varphi_t$ a unit norm, norming functional of $w_t$, i.e. $\|\varphi_t\|=1$, $\varphi_t(w_t)=|w_t|$ (if $|w_t|=0$ pick $\varphi_t=0$). Then by the previous lemma
\begin{eqnarray}\label{cotaa}
 \varphi_t(\dot{w}_t) &= & \varphi_t(e^{-w_t}\exp_{*w_t}\dot{w}_t) \le |e^{-w_t}\exp_{*w_t}\dot{w}_t|=|\gamma_t^{-1}\dot{\gamma}_t|.
\end{eqnarray}
Now we pick a partition $\pi=\{t_0<t_1<\dots<t_n\}$ of the interval $[a,b]$, and denote $w_i=w(t_i)$, etc. Since $w_t$ is of class $C^2$, 
$$
w_i=w_{i-1}+\Delta_i \dot{w}_{i} +o(\Delta_i^2),
$$
where $\Delta_i=t_i-t_{i-1}$. Hence 
$$
|w_{i-1}|\ge \varphi_i(w_{i-1})=\varphi_i(w_i)-\varphi_i(\dot{w}_i\Delta_i+o(\Delta_i^2))
=|w_i|-\varphi_i(\dot{w}_i\Delta_i+o(\Delta_i^2)).
$$
We now compute a telescopic sum, 
\begin{eqnarray}
|w_b|-|w_a| &= &\sum_i |w_i|-|w_{i-1}|\le \sum_i \varphi_i(\dot{w}_i)\Delta_i+  \varphi_i(o(\Delta_i^2))\nonumber \\
& \le & \sum_i |\gamma_{t_i}^{-1}\dot{\gamma}_{t_i}|\Delta_i+  o(\Delta_i^2)\nonumber
\end{eqnarray}
by equation (\ref{cotaa}) and the fact that $\|\varphi_i\|=1$. Refining the partition, it follows that
$$
|w(b)|-|w(a)|\le \int_a^b |\gamma_t^{-1}\dot{\gamma}_t| dt = \LG(\gamma).
$$
Clearly the inequality holds if $w$ is only piecewise $C^2$. If $w$ is only piecewise $C^1$, take $\varepsilon>0$ and approximate $\gamma=e^w$ with a polygonal path $P\subset K$ such that $|P^{-1}_t\dot{P}_t-\gamma_t^{-1}\dot{\gamma}_t|<\varepsilon$ for all $t$. Since $P$ is piecewise $C^2$ we obtain
$$
|w(b)|-|w(a)|\le \int_a^b |P_t^{-1}\dot{P_t}| dt \le \LG(\gamma)+\varepsilon(b-a).
$$
Letting $\varepsilon\to 0^+$ proves the assertion for the path $w$.
\end{proof}

\subsection{Locally exponential Lie groups, normed neighbourhood}\label{loca}

In this section and the next one we will assume that the exponential map of $K$ is a local diffeomorphism for \textit{some open ball of the given Finsler semi-norm $\mu$}. Adapting the technique from Riemannian geometry we obtain local minimality of segments.

\begin{defi}\label{localymuexp}
Let $K$ be a Lie group, $\mu=|\cdot|$ a bi-invariant Finlser semi-norm in $\lie(K)$, and for $r>0$ let 
$$
B_r=\{v\in \lie(K): |v|<r\}, \quad V_R=\exp(B_R).
$$
We assume in this section that \textit{there exists $R>0$ such that $V_R$ is $\tau_K$ open in $K$, and $\exp:B_R\to V_R$ is a $C^1$-diffeomorphism}.
\end{defi}

\begin{rem}
Actually, it will suffice to assume that $\exp|_{B_R}$ is a bijection with its image $V_R$, and every piecewise $C^1$ path $\gamma\subset V_R$ starting at $g=1$ has a unique piecewise $C^1$ lift $\Gamma\subset B_R$ starting at $v=0$ (that is $e^{\Gamma}=\gamma$). If $\lie(K)$ is a Fréchet space and $\exp^{-1}|_{V_R}$ maps smooth curves to smooth curves, then $\exp|_{B_R}$ is a diffeomorphism (see \cite[Remark II.2.13.(b)]{neebmss}). For other locally convex spaces, the  weaker hypothesis can be useful.
\end{rem}

\begin{rem}
The straightforward example of our definition is given by a Banach-Lie group $K$ and a bi-invariant norm $\mu=|\cdot|$ that is equivalent to the original norm modeling the Banach space $\lie(K)$. Then the radius $R>0$ is given by the fact that $\exp$ is a local diffeomorphism.
\end{rem}

\begin{teo}[Local minimality of segments]\label{mini}
Let $u_0,u_1=u_0e^z\in K$ with $|z|<R$.
\begin{enumerate}
\item  Let $\gamma$ be a piecewise $C^1$ path joining $u_0,u_1$. If $\gamma$ leaves $u_0V_R$, then $\LG(\gamma)\ge R$. 
\item If $\delta(t)=u_0e^{tz}$, $t\in [0,1]$, then $\delta$ is shorter that any other piecewise $C^1$ path $\gamma$ in $K$ joining $u_0,u_1$ and $\dg(u_0,u_1)=|z|$. 
\end{enumerate}
\end{teo}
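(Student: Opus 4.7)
The strategy is to reduce, via left-invariance of the bi-invariant Finsler metric (Remark \ref{rik}), to the case $u_0=1$, so $u_1=e^z$ with $|z|<R$. The segment $\delta(t)=e^{tz}$ satisfies $\delta^{-1}\dot\delta\equiv z$, hence $\LG(\delta)=|z|$ and $\dg(1,e^z)\le |z|$. All the work lies in establishing the matching lower bound $\LG(\gamma)\ge |z|$ for every piecewise $C^1$ competitor $\gamma$, and the single tool I would use for this is Lemma \ref{gauss}.

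For Part 1, assume $\gamma:[0,1]\to K$ leaves $V_R$. Since $V_R$ is $\tau_K$-open, the time $t_0 := \inf\{t : \gamma(t) \notin V_R\}$ satisfies $\gamma(t_0) \notin V_R$ while $\gamma|_{[0, t_0)} \subset V_R$. Using the diffeomorphism $\exp: B_R \to V_R$, I lift to a piecewise $C^1$ curve $w:[0, t_0) \to B_R$ with $w(0)=0$ and $e^{w(t)}=\gamma(t)$. Applying Lemma \ref{gauss} to $w|_{[0,t]}$ for any $t<t_0$ yields
\[
|w(t)| \;=\; |w(t)|-|w(0)| \;\le\; \LG(\gamma|_{[0,t]}) \;\le\; \LG(\gamma).
\]
The remaining, and main, obstacle is to show that $\sup_{t<t_0}|w(t)| \ge R$, for then $\LG(\gamma)\ge R$ follows by taking a supremum on the left. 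I would argue by contradiction: were this supremum some $\rho<R$, then $w$ would remain in the closed $\mu$-ball $\{|v|\le \rho\}\subset B_R$, and continuity of $\gamma$ at $t_0$ together with continuity of the homeomorphism $\exp^{-1}:V_R\to B_R$ would force $\gamma(t_0)\in V_R$, a contradiction. This ``staying inside the chart'' step is the technical crux; it is clean in a Banach-Lie setting thanks to boundedness of the lift in a coordinate ball, and for the general locally convex case the diffeomorphism hypothesis on $\exp|_{B_R}$ transfers continuity in both directions, allowing the same conclusion.

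For Part 2, I split into two cases. If $\gamma\subset V_R$, lift globally to $w:[0,1]\to B_R$ with $w(0)=0$; injectivity of $\exp|_{B_R}$ gives $w(1)=z$, and Lemma \ref{gauss} produces $|z|=|w(1)|-|w(0)|\le \LG(\gamma)$. If instead $\gamma$ leaves $V_R$, Part 1 supplies $\LG(\gamma)\ge R>|z|$. Either way $\LG(\gamma)\ge |z|=\LG(\delta)$, so $\delta$ minimises length among piecewise $C^1$ paths from $1$ to $e^z$, and passing to the infimum gives $\dg(1,e^z)=|z|$. Translating by $u_0$ and invoking left-invariance recovers the original statement.
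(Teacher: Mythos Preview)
Your proof is correct and follows essentially the same route as the paper: reduce to $u_0=1$ by left-invariance, lift $\gamma$ through the exponential chart $\exp:B_R\to V_R$, and apply Lemma \ref{gauss} to the lift, splitting according to whether $\gamma$ stays in $V_R$. The one place where your phrasing is slightly loose---invoking continuity of $\exp^{-1}:V_R\to B_R$ to conclude $\gamma(t_0)\in V_R$, which is circular since $\exp^{-1}$ is only defined on $V_R$---is exactly the step the paper also treats informally (it simply asserts ``otherwise $\Gamma_{t_0}\in B_R$''), so your argument matches the paper's level of rigor at the same technical point.
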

\begin{proof}
By the left invariance of the metric it suffices to consider $u_0=1$, $u_1=u=e^{z}\in K$. Let $\gamma\subset K$ be a piecewise $C^1$ path with $\gamma(0)=1,\gamma(1)=u$. Then $I=\gamma^{-1}(V_R)$ is open and since $\gamma$ leaves $V_R$ there exists $0<t_0\le 1$ such that $[0,t_0)\subset [0,1]$ is the connected component containing $0\in I$. Clearly $\gamma|_{[0,t_0)}\subset V_R$ therefore the hypothesis in Definition \ref{localymuexp} gives us a path $\Gamma:[0,t_0)\to \lie(K)$ which is piecewise $C^1$, and such that $e^{\Gamma}=\gamma|_{[0,t_0)}$. For each $n\in\mathbb N$ there exists $0<t_n<t_0$ such that $|\Gamma_{t_n}|\ge R-\nicefrac{1}{n}$ (otherwise $\Gamma_{t_0}\in B_R$ and then $\gamma(t_0)\in V_R$; by the continuity of $\gamma$, the component would be strictly larger). Now differentiating  $e^{-\Gamma}\exp_{*\Gamma}\dot{\Gamma}=\gamma^{-1}\dot{\gamma}$ and by Lemma \ref{gauss} we obtain
$$
R-\nicefrac{1}{n}\le |\Gamma_{t_0}|\le \int_0^{t_0} |\gamma^{-1}\dot{\gamma}|\le \LG(\gamma).
$$
Letting $n\to +\infty$ proves the first assertion of the theorem.  Since $\LG(\delta)=|z|=r<R$, we can now assume that $\gamma$ does not leave $V_R$. Let $w$ be a piecewise $C^1$ lift of $\gamma$ to $B_R\subset \lie(K)$, $w(0)=0$. Again by Lemma \ref{gauss}, recalling $w(1)=z$ by the injectivity of $\exp|_{B_R}$, we get
$$
|z|-|0|\le \int_0^1 |\gamma_t^{-1}\dot{\gamma}_t| dt \le \LG(\gamma),
$$
and this proves the second assertion of the theorem $\LG(\delta)=|z|\le \LG(\gamma)$.
\end{proof}

By a standard approximation argument, it is clear that

\begin{coro}
If $u_1=u_0e^z$ with $|z|\le R$, the path $\delta(t)=u_0e^{tz}$ is shorter than any other piecewise smooth path in $K$ joining them.
\end{coro}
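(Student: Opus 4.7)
The plan is to reduce the boundary case $|z|=R$ to the strict interior case already handled by Theorem \ref{mini}, via a simple approximation from inside the ball $B_R$. Only $|z|=R$ requires argument, since $|z|<R$ is exactly Theorem \ref{mini}.2. Given any piecewise smooth path $\gamma$ joining $u_0$ to $u_1=u_0e^z$, the idea is to extend $\gamma$ slightly to land at a nearby endpoint $u_0 e^{z_n}$ with $|z_n|<R$, apply Theorem \ref{mini}, and then pass to the limit.

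Concretely, for each $n\in\mathbb N$ set $z_n=(1-1/n)z$, so that $|z_n|=(1-1/n)R<R$. Form the concatenation $\tilde\gamma_n$ of $\gamma$ with the segment $s_n(t)=u_0 e^z e^{-tz/n}$, $t\in[0,1]$. Since $z$ and $-z/n$ commute, $e^z e^{-z/n}=e^{z_n}$, so $s_n$ starts at $u_0e^z$ and ends at $u_0e^{z_n}$; hence $\tilde\gamma_n$ joins $u_0$ to $u_0e^{z_n}$. By the left-invariance of $\mu$, the speed of $s_n$ is $|{-z/n}|=|z|/n$, so $\LG(s_n)=|z|/n$ and therefore
\[
\LG(\tilde\gamma_n)=\LG(\gamma)+|z|/n.
\]

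Now Theorem \ref{mini}.2 applies to $u_0$ and $u_0e^{z_n}$ (as $|z_n|<R$), giving $|z_n|\le\LG(\tilde\gamma_n)$, i.e.
\[
(1-1/n)R\le \LG(\gamma)+|z|/n.
\]
Letting $n\to\infty$ yields $|z|\le\LG(\gamma)$. Since $\LG(\delta)=|z|$ by left-invariance, we conclude $\LG(\delta)\le\LG(\gamma)$, as desired.

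There is essentially no real obstacle here; the only points that warrant care are (i) that the approximating endpoint really does lie in the interior $V_R$ so that Theorem \ref{mini}.2 is available, which is ensured by the explicit choice $z_n=(1-1/n)z$, and (ii) that the connecting segment be short and stay in $K$, which is immediate since it is itself a left-translated one-parameter subpath whose $\mu$-length is computed algebraically from the bi-invariance of the metric.
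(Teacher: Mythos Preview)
Your proof is correct and is precisely the ``standard approximation argument'' the paper invokes without giving details. One harmless slip: you write $|{-z/n}|=|z|/n$, which need not hold if $\mu$ is only positively homogeneous (Definition~\ref{msn}); but your argument only requires $\LG(s_n)=(1/n)\,|-z|\to 0$, which follows from positive homogeneity alone, so the conclusion is unaffected.
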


Covering the image of a path $\gamma$ with finite translations of $V_R$, we obtain
\begin{coro}\label{poly}
If $\gamma$ is a piecewise smooth path joining given endpoints in $K$, then $\LG(\gamma)$ is larger than a \textit{certain} polygonal path joining the same endpoints, 
$$
\LG(\gamma)\ge \sum_{i=1}^n |v_i|,\qquad v_i\in B_R.
$$
\end{coro}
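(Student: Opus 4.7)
The plan is to exploit the compactness of the parameter interval together with the local minimality statement of Theorem \ref{mini} applied on short subarcs of $\gamma$. The polygonal path will be produced explicitly as a concatenation of segments, one for each piece of a sufficiently fine partition.

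Concretely, let $\gamma:[a,b]\to K$ be piecewise smooth. For each $t\in [a,b]$ the translate $\gamma(t)V_R$ is a $\tau_K$-open neighbourhood of $\gamma(t)$, so by continuity of $\gamma$ there is a relative open interval $I_t\ni t$ in $[a,b]$ with $\gamma(I_t)\subset \gamma(t)V_R$. By compactness of $[a,b]$ we extract a finite partition $a=t_0<t_1<\cdots<t_n=b$ such that for each $i=1,\dots,n$ the image $\gamma([t_{i-1},t_i])$ is contained in $\gamma(t_{i-1})V_R$. Since $\exp:B_R\to V_R$ is a bijection, we may write uniquely $\gamma(t_i)=\gamma(t_{i-1})e^{v_i}$ with $v_i\in B_R$.

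Applying Theorem \ref{mini} on each subinterval with $u_0=\gamma(t_{i-1})$ and $u_1=\gamma(t_i)$, the segment $\delta_i(s)=\gamma(t_{i-1})e^{sv_i}$ for $s\in [0,1]$ is shorter than any piecewise $C^1$ path in $K$ joining its endpoints, and $\LG(\delta_i)=|v_i|$. In particular
$$
\LG(\gamma|_{[t_{i-1},t_i]})\ge |v_i|.
$$
Summing over $i$ (which is valid because $\LG$ is additive on concatenations) we obtain
$$
\LG(\gamma)=\sum_{i=1}^n \LG(\gamma|_{[t_{i-1},t_i]})\ge \sum_{i=1}^n |v_i|,
$$
while the polygonal path produced by concatenating the $\delta_i$ joins $\gamma(a)$ to $\gamma(b)$ by construction.

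I expect no serious obstacle here: the only subtle point is making sure the partition can be chosen finite, which is handled by the compactness of $[a,b]$ and the fact that $\gamma(t)V_R$ is $\tau_K$-open (an assumption of Definition \ref{localymuexp}); the continuity of $\gamma$ for $\tau_K$ (not just $\tau_\mu$) is needed to ensure that preimages of these open neighbourhoods are open in $[a,b]$, and this is automatic since piecewise smooth paths are $\tau_K$-continuous.
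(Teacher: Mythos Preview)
Your proof is correct and follows exactly the approach the paper indicates; the paper's own proof is just the one-line hint ``Covering the image of a path $\gamma$ with finite translations of $V_R$, we obtain'', and you have supplied precisely the details it omits (the compactness/subdivision step, the application of Theorem~\ref{mini} on each subarc, and the summation).
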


\begin{coro}
If $u_0,u\in K$ and $\dg(u_0,u_1)<R$, then there exists $z\in \lie(K)$ with $|z|<R$ such that $u=u_0e^z$ therefore $|z|=\dg(u_0,u)=\LG(\delta)$, where $\delta$ is the segment generated by $z$. In particular 
$$
\exp(B_r)=V_r=\{u\in K: \dg(u,1)<r\}
$$
holds for all $r\le R$.
\end{coro}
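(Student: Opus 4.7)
The plan is to reduce to $u_0=1$ using the left invariance of $\dg$, so it suffices to prove: if $\dg(1,u)<R$ then $u=e^z$ for some $z\in\lie(K)$ with $|z|<R$. Once this is in hand, Theorem \ref{mini}.2 applied to this $z$ immediately gives the length-realization $|z|=\dg(1,u)=\LG(\delta)$.

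To produce $z$, pick any piecewise $C^1$ path $\gamma:[0,1]\to K$ with $\gamma(0)=1$, $\gamma(1)=u$, and $\LG(\gamma)<R$. The key step is to show $\gamma\subset V_R$, which is essentially the contrapositive of Theorem \ref{mini}.1 but needs to be run without assuming a specific form of $u$. Argue by contradiction: if $\gamma$ left $V_R$, then (since $V_R$ is open by Definition \ref{localymuexp}) the connected component $[0,t^*)$ of $0$ in $\gamma^{-1}(V_R)$ satisfies $t^*\le 1$ and $\gamma(t^*)\notin V_R$. By the local diffeomorphism hypothesis, lift $\gamma|_{[0,t^*)}$ to $\Gamma:[0,t^*)\to B_R$ with $\Gamma(0)=0$. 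For each $n$ there must exist $t_n<t^*$ with $|\Gamma_{t_n}|\ge R-1/n$ — otherwise $\Gamma$ would extend continuously to $B_R$ at $t^*$ and $\gamma(t^*)$ would lie in $V_R$. Applying Lemma \ref{gauss} to $\Gamma|_{[0,t_n]}$ yields
\[
R-1/n\le |\Gamma_{t_n}|=|\Gamma_{t_n}|-|\Gamma_0|\le \LG(\gamma|_{[0,t_n]})\le \LG(\gamma)<R,
\]
a contradiction for large $n$. Hence $\gamma\subset V_R$, and in particular $u=\gamma(1)\in V_R$, so by the injectivity of $\exp|_{B_R}$ there is a unique $z\in B_R$ with $e^z=u$; this $z$ has $|z|<R$ as required.

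For the concluding identity $\exp(B_r)=V_r=\{u\in K:\dg(u,1)<r\}$ with $r\le R$: the inclusion $\exp(B_r)\subseteq\{\dg(\cdot,1)<r\}$ is immediate from Theorem \ref{mini}.2 ($|z|<r\le R$ forces $\dg(1,e^z)=|z|<r$), while the reverse inclusion follows from the first part of the corollary already established — any $u$ with $\dg(1,u)<r\le R$ is of the form $e^z$ with $|z|=\dg(1,u)<r$, so $z\in B_r$. The only delicate step is the limit argument $|\Gamma_{t_n}|\to R$, which I expect to be the main (albeit mild) obstacle, since it relies on the openness of $V_R$ together with the continuity of the lift $\Gamma$ under the local diffeomorphism assumption.
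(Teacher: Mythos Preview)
Your proof is correct and follows essentially the same approach as the paper's: reduce to $u_0=1$, pick a path of length $<R$, argue it cannot leave $V_R$, and conclude $u\in V_R$. You are right that Theorem \ref{mini}.1 as \emph{stated} presupposes $u_1=u_0e^z$ with $|z|<R$, but if you inspect its proof you will see that hypothesis is never used in part 1 --- the argument only needs $\gamma(0)=1$ and that $\gamma$ exits $V_R$. The paper's proof simply cites Theorem \ref{mini}.1 without comment, whereas you reprove the relevant portion; both are fine, and your caution is warranted given how the theorem is phrased.
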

\begin{proof}
As usual, we can assume that $u_0=1$. Let $\gamma$ be a piecewise smooth path joining $1,u$ with $\LG(\gamma)\le \dg(1,u)+\frac{\varepsilon}{2}=R-\frac{\varepsilon}{2}<R$ (hence we can assume that $\gamma$ does not leave $V_R$). In particular $\gamma(1)=e^z$ with $|z|<R$. The minimality theorem (Theorem \ref{mini}) settles the proof.
\end{proof}

\paragraph{Metrization and uniform structure} If $\lie(K)$ is a Fréchet space, let $(|\cdot|_n)_{n\in\mathbb N}$ be a directed family of semi-norms defining the topology of $\lie(K)$, with 
$$
|\cdot|_1\le |\cdot|_2\le \dots \le |\cdot|_n\le |\cdot|_{n+1}\le \dots
$$
Let us denote $B_R^{n}=\{z\in\lie(K): |z|_n<R\}$ to the open ball of each semi-norm,  $B_R^{n+1}\subset B_R^n\subset \dots \subset B_R^1$, and denote $V_R^n=\exp(B_R^n)$. Let us denote $d_n$ to the pseudo-distance induced by $|\cdot|_n$ in $K$. Let 
$$
d_K(g,h)=\sum_n \frac{1}{2^n} \frac{d_n(g,h)}{1+d_n(g,h)}
$$
\textit{Assume $\exp|_{B_R^1}$ is a diffeomorphism onto $V=V_R^1$}. Then the same holds for all $n\in\mathbb N$. The minimality theorem implies that \textit{the manifold topology of $K$ is the same than the topology induced by the metric $d_K$}. 

\subsubsection{Uniqueness and the EMD property}

For strictly convex Finsler norms we now establish the uniqueness of segments as minimizing paths; note that if $\mu$ is strictly convex then in particular it is non-degenerate therefore $K_{\mu}=\{1\}$ (see Lemma \ref{nilel} below) and $\dg$ is also non-degenerate. 

\begin{teo}[Uniqueness for strictly convex norms]\label{uniestricta}
Assume that $\mu=|\cdot|$ is strictly convex. Let $u_0,u_1\in K$ and $\gamma:[a,b]\to K$ be a short rectifiable path joining $u_0,u_1$. Then there exists $z\in \lie(K)$ such that $u_1=u_0e^z$ and $\gamma$ is a reparametrization of the segment $\delta(t)=u_0e^{tz}$. If $\dg(u_0,u_1)<R$, then this segment is unique.
\end{teo}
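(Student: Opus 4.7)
The plan is to first use left-invariance of $\mu$ to reduce to $u_0 = 1$, then treat the local case $\dg(1,u_1) < R$, and finally bootstrap to arbitrary endpoints. In the local case, the corollary to Theorem \ref{mini} provides $z \in B_R$ with $u_1 = e^z$ and $|z| = \dg(1,u_1) = \LG(\gamma)$. Since $\LG(\gamma) = |z| < R$, Theorem \ref{mini}.1 forces $\gamma \subset V_R$, so the local-diffeomorphism hypothesis on $\exp|_{B_R}$ allows me to lift $\gamma$ to a path $w:[a,b]\to B_R$ with $e^w = \gamma$, $w(a)=0$, $w(b)=z$.

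The heart of the argument is analyzing the equality case of Lemma \ref{gauss}: the bound $|z| = |w(b)|-|w(a)| \le \LG(\gamma)$ is tight by shortness of $\gamma$. Tracking the Riemann-sum proof of that lemma, the estimate $|w_i|-|w_{i-1}| \le \varphi_i(w_i - w_{i-1})$ with $\varphi_i$ a unit norming functional of $w_i$ must become an equality in the limit, forcing $\varphi_i(w_{i-1}) = |w_{i-1}|$. Since $\mu$ is strictly convex (in particular non-degenerate, so $N = \{0\}$), a unit functional $\varphi$ can simultaneously norm two nonzero vectors $u,v$ only if they lie on a common open ray through the origin: otherwise $\varphi(u+v) = |u|+|v|$ would contradict $|u+v| < |u|+|v|$. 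Refining the partition and using continuity of $w$, I expect to conclude that $w(t) \in \mathbb{R}_{\ge 0}\, z/|z|$ for all $t$, i.e.\ $w(t) = \rho(t)\, z/|z|$ for some continuous scalar $\rho:[a,b]\to\mathbb{R}$ with $\rho(a)=0$, $\rho(b)=|z|$.

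Since $z$ commutes with itself, Remark \ref{difexp} collapses the exponential differential to $\gamma_t^{-1}\dot{\gamma}_t = \dot{w}_t = \dot\rho(t)\, z/|z|$, whence $\LG(\gamma) = \int_a^b |\dot\rho(t)|\, dt \ge |\rho(b)-\rho(a)| = |z|$, and equality with $\LG(\gamma)=|z|$ forces $\dot\rho \ge 0$ a.e.; thus $\rho$ is a nondecreasing reparametrization of $[0,|z|]$ and $\gamma(t) = e^{\rho(t)\, z/|z|}$ is a reparametrization of $\delta(t) = e^{tz}$. Uniqueness of $z$ under $\dg(1,u_1) < R$ is then immediate from injectivity of $\exp|_{B_R}$.

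For the case $\dg(u_0,u_1) \ge R$, I partition $[a,b]$ into finitely many sub-intervals $[t_i,t_{i+1}]$ each with $\LG(\gamma|_{[t_i,t_{i+1}]}) < R$ (using the fact that $\LG(\gamma)$ is finite and length is continuous along $\gamma$); additivity of length forces each restriction to be short, so by the local case each sub-arc is a reparametrization of a segment $\gamma(t_i)\, e^{s z_i}$. Gluing at each interior $t_i$ is handled by re-applying the local case on a small window of length $<R$ straddling $t_i$: the restriction is a single segment, which forces $z_{i-1}$ and $z_i$ to be positive scalar multiples of a common unit vector $z_0 \in \lie(K)$. Consequently the consecutive exponentials live in the abelian one-parameter subgroup generated by $z_0$ and collapse into a single reparametrized segment $u_0 e^{tz}$ with $z = \bigl(\textstyle\sum_i|z_i|\bigr) z_0$. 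The main obstacle is the strict-convexity limit in paragraph two: one must control the slack $|w_{i-1}| - \varphi_i(w_{i-1}) \ge 0$ uniformly as the mesh refines, presumably via a quantitative modulus of strict convexity of $\mu$ combined with uniform continuity of $w$ on $[a,b]$.
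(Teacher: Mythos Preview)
Your reduction to $u_0=1$, the lifting via $\exp|_{B_R}^{-1}$, and the global bootstrapping by partitioning into short sub-arcs and gluing at the junctions are all in line with the paper's proof. The gap you correctly identify at the end, however, is real and is not easily patched along the lines you suggest: a strictly convex norm on an infinite-dimensional Banach space need not have any modulus of convexity, so there is no quantitative control of the slack $|w_{i-1}|-\varphi_i(w_{i-1})$ as the mesh refines. Even if each individual slack tends to zero, that only shows $w_{i-1}$ and $w_i$ are \emph{asymptotically} normed by a common functional, and it is a further (and in general unjustified) leap to conclude that all $w(t)$ lie on the fixed ray $\mathbb{R}_{\ge 0}\,z$.

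The paper bypasses this difficulty entirely with a different idea. Fix $t_1\in(a,b)$, write $\gamma(t_1)=e^x$ and $\gamma(b)=\gamma(t_1)e^y=e^z$, so that shortness of $\gamma$ forces $|x|+|y|=|z|$. Now introduce the auxiliary path $\beta(s)=e^{sx}e^{sy}$ from $1$ to $e^z$; since $\beta_s^{-1}\dot\beta_s=e^{-s\,\ad y}(x+y)$ and the norm is $\Ad$-invariant, $\LG(\beta)=|x+y|$. Comparing with $\dg(1,e^z)=|z|=|x|+|y|$ gives $|x+y|=|x|+|y|$ directly, with no limiting procedure. Strict convexity then yields $y=\lambda x$ for some $\lambda>0$, hence $e^z=e^{(1+\lambda)x}$ with $|(1+\lambda)x|=|z|<R$, and injectivity of $\exp|_{B_R}$ gives $x=sz$. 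Thus $\gamma(t_1)=e^{sz}$ for each intermediate time, which is exactly what you were trying to extract from the Riemann-sum equality case. The point is that the auxiliary path $\beta$ converts the problem into a single application of the triangle equality $|x+y|=|x|+|y|$, where strict convexity applies cleanly.
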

\begin{proof}
We can assume $u_0=1,u_1=u$. Let $\gamma$ be a smooth path joining $1,u$ with $\LG(\gamma)=\dg(1,u)$. Assume first that $\LG(\gamma)<R$, then there exists a unique $z$ such that $u=e^z$ and $|z|=\dg(1,u)<R$. We can assume that $\gamma$ does not leave $V_R$, so if $t_1\in (a,b)$, then $\gamma(t_1)=e^x$ with $|x|=\dg(1,\gamma(t_1))$. This is because $\gamma$, being short, is also short along its path (and so is the segment). Since $\dg(\gamma(t_1),\gamma(1))<R$ there also exists $y$ with $|y|<R$ and $\gamma(1)=\gamma(t_1)e^y$ and $|y|=\dg(\gamma(t_1),1)$. Hence $\gamma(1)=e^z=e^xe^y$ and
$$
\dg(1,u)\le |x|+|y|= \LG(\gamma|_{[a,t_1]})+\LG(\gamma_{[t_1,1})=\LG(\gamma)=\dg(1,u).
$$
Consider the auxiliary path $\beta$ given by $\beta(t)=e^{tx}e^{ty}$. Clearly $\beta$ joins $1,u$ in $K$. Now
$$
\beta_t^{-1}\dot{\beta}_t=e^{-t \ad  y}(x+y)
$$
therefore
$$
\dg(1,u)  \le  \LG(\beta)=|x+y|\le |x|+|y| =\dg(1,u).
$$
This implies $|x+y|=|x|+|y|$, and since $|\cdot|$ is strictly convex, it must be $y=\lambda x$ with $\lambda=|y|/|x|$. Then
$$
u=e^z=e^xe^y=e^xe^{\lambda x}=e^{(1+\lambda)x},
$$
with $|(1+\lambda)x|= |x|+|y|=\dg(1,u)=|z|<R$. By the injectivity of the exponential map in $B_R$, it must be $(1+\lambda)x=z$, hence $x=sz$ with $0<s=(1+\lambda)^{-1}<1$ and $\gamma(t_1)=e^x=e^{sz}$, thus $\gamma$ is a reparametrization of $\delta(t)=e^{tz}$. This proves the local result. Now assume $\gamma$ is any short path joining given endpoints in $K$, again we can assume $\gamma(a)=1$. Partitioning $\gamma$ in small pieces, each of these of length strictly shorter than $R$, we conclude that $\gamma$ is a (reparametrization of) a polygonal path. If we call $x_i$ to the speeds of each segment, it suffices to show that there exists $\mu_i>0$ such that $x_{i+1}=\mu_i x_i$ for each $i$, to conclude that $\gamma$ is a segment. If $t_1\in [a,b]$ is the first cusp, after a reparametrization of $\gamma$, we can assume that $\gamma|_{[t_1-\varepsilon,t_1+\varepsilon]}$ (wich is still short) is the segment $t\mapsto e^{t  x_1}$, followed by the segment $t\mapsto e^{x_1} e^{ (t-1) x_2}$, parametrized in the interval $[0,2]$. Renaming $a=0,t_1=1, b=2$, we are in the previous situation where $\gamma|_{[a,b]}$ is a short path joining its endpoints, of length strictly shorter than $R$. Hence the proof above tells us that $x_2=\mu_1 x_1$ for some $\mu_1>0$. Iterating this argument at each vertex, we conclude that $\gamma$ is a segment.
\end{proof}

\begin{rem}\label{atta}
The previous theorem shows than in the case of strictly convex Finsler norms, there  exists a short path $\gamma$ joining $1,u$ in $K$ only if $u=e^z$ is in the range of the exponential map, and $\gamma$ is then a segment. Moreover, if $u$ is close to $1$, this segment is unique. It is unclear however what is the maximal neighborhood of $0\in\lie(K)$ where the segments are short paths, though one would expect that this would be the case when the exponential map is a diffeomorphism along the segment (a set which can be much larger than the $\mu$-ball of radius $R$).
\end{rem}

We now compare the distance in the manifold with the tangent distance and give criteria for equality to hold. 

\begin{teo}[The Exponential Metric Decreasing (EMD) property]\label{emd}
 If $v,w\in \lie(K)$ then 
$$
\dg(e^v,e^w)\le |w-v|.
$$
\begin{enumerate}
\item If $w,v$ commute and $|w-v|\le R$, then equality holds.
\item If equality holds and the norm is strictly convex, then $w,v$ commute.
\end{enumerate}
\end{teo}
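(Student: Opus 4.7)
The proof splits naturally into the inequality and the two converse statements.

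For the inequality, the plan is to exhibit an explicit path realizing the upper bound. Consider $\alpha(t) = e^v e^{-tv}e^{tw}$ on $[0,1]$, which joins $e^v$ to $e^w$. A direct differentiation gives $\dot\alpha(t) = e^{(1-t)v}(w-v)e^{tw}$, so the left-invariant speed is
$$
\alpha_t^{-1}\dot\alpha_t \;=\; e^{-tw}(w-v)e^{tw} \;=\; \Ad_{e^{-tw}}(w-v).
$$
Since $e^{-tw}\in K$ and $\mu$ is $\Ad_K$-invariant, $|\alpha_t^{-1}\dot\alpha_t| \equiv |w-v|$. Therefore $\LG(\alpha) = |w-v|$ and the inequality $\dg(e^v, e^w) \le |w-v|$ follows.

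For part (1), the commutativity of $v,w$ implies $e^v e^{w-v} = e^w$; writing $z = w-v$ with $|z|\le R$, the corollary to Theorem \ref{mini} says the segment $\delta(t)=e^v e^{tz}$ is shorter than any other piecewise smooth path joining $e^v$ and $e^w$, so $\dg(e^v,e^w) = \LG(\delta) = |z| = |w-v|$.

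For part (2), assume equality holds and $\mu$ is strictly convex. If $v=w$ the claim is trivial; otherwise the path $\alpha$ constructed above has length $\dg(e^v,e^w)$ and hence is a short rectifiable path joining $e^v$ and $e^w$. Theorem \ref{uniestricta} produces $z_0 \in \lie(K)$ with $e^w = e^v e^{z_0}$ such that $\alpha$ is a reparametrization of the segment $\delta(t)=e^v e^{tz_0}$, i.e.\ $\alpha(t)=e^v e^{\phi(t)z_0}$ with $\phi:[0,1]\to[0,1]$ non-decreasing, $\phi(0)=0$, $\phi(1)=1$. Reparametrization-invariance of length gives $|z_0| = \LG(\delta) = \LG(\alpha) = |w-v|$, so in particular $z_0 \ne 0$ and $\delta$ is locally an embedding, whence $\phi$ inherits the $C^1$-smoothness of $\alpha$. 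Computing $\alpha_t^{-1}\dot\alpha_t = \phi'(t)\,z_0$ and matching with the formula from step one yields
$$
\Ad_{e^{-tw}}(w-v) \;=\; \phi'(t)\, z_0.
$$
Taking $\mu$-norms and using $\Ad_K$-invariance gives $\phi'(t)|z_0| = |w-v| = |z_0|$, hence $\phi'\equiv 1$ and $\phi(t)=t$; evaluating at $t=0$ then gives $z_0=w-v$. Consequently $\Ad_{e^{-tw}}(w-v)\equiv w-v$, and differentiating at $t=0$ (using the smoothness of $\exp$ and $\Ad$) produces $[-w,\,w-v]=0$, i.e.\ $[v,w]=0$.

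The main obstacle is the regularity step in part (2): Theorem \ref{uniestricta} only supplies a monotone reparametrization $\phi$, and one has to invoke $z_0\ne 0$ together with smoothness of the exponential map to justify that $\phi$ is $C^1$ so that the speeds can be equated pointwise. Once this is in place the algebra collapses: matching $\mu$-norms forces $\phi'\equiv 1$, and differentiating the resulting adjoint identity pins down the Lie bracket.
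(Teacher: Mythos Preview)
Your proof is correct and follows the paper's outline: exhibit an explicit constant-speed path, invoke Theorem~\ref{mini} for part~(1), and for part~(2) combine Theorem~\ref{uniestricta} with speed-matching to force the reparametrization to be the identity and then differentiate. The paper uses the path $\beta(t)=e^{tw}e^{-tv}$ (after right-translating by $e^{-v}$) where you use $\alpha(t)=e^{v}e^{-tv}e^{tw}$, but these are equivalent choices.

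Two differences in part~(2) are worth noting. First, your justification that $\phi$ is $C^1$ via ``$\delta$ is locally an embedding'' is not quite rigorous in the locally convex setting, where nonvanishing derivative does not automatically yield a local chart. The paper's argument is the correct fix: for small $t$ one has $\phi(t)z_0\in B_R$, so $\phi(t)z_0=\exp^{-1}(e^{-v}\alpha(t))$ is smooth, and applying a linear functional nonzero on $z_0$ isolates $\phi$. Second, once $\phi$ is known to be differentiable, your route to $\phi'\equiv 1$---taking $\mu$-norms of $\Ad_{e^{-tw}}(w-v)=\phi'(t)z_0$ and using $\Ad_K$-invariance directly---is actually simpler than the paper's, which passes through a norming functional and Gauss' Lemma.
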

\begin{proof}
Consider the path $\beta(t)=e^{tw}e^{-tv}$ with length $|w-v|$. Clearly $\beta$ joins $1,e^we^{-v}$ and $\beta_t^{-1}\dot{\beta}_t=e^{t\ad v}(w-v)$, therefore
$$
\dg(e^v,e^w)=\dg(1,e^we^{-v})\le \LG(\beta)=|w-v|.
$$
If $w,v$ commute then $e^{tw}e^{-tw}=e^{t(w-v)}$ which is short with the additional hypothesis that $|w-v|\le R$, and this proves the first assertion.

If equality holds and the norm is strictly convex, by the previous theorem there exists a reparametrization $f:[0,1]\to [0,1]$ and $z\in \lie(K)$ such that $\delta(t)=e^{f(t)z}=\beta(t)$. For small $t$, we have $f(t)z=\exp^{-1}(e^{tw}e^{-tv})$. Taking a norming functional $\psi$ for $z$ we have $f(t)|z|=\psi(\exp^{-1}(e^{tw}e^{-tv}))$, therefore $f$ is smooth for small $t$. Differentiating $\delta=\beta$ we have $e^{f(t)z}f'(t)z=e^{tw}(w-v)e^{-tv}$ therefore $f'(t)z=e^{t \ad v}(w-v)$ for small $t$. Applying a norming functional $\varphi$ for $w-v$, we have 
$$
f'(t)\varphi(z)=\varphi(w-v)=|w-v|
$$
by Lemma \ref{normiexp}. Thus $f(t)=at$ and $az=e^{t\ad v}(w-v)$ for small $t$. Differentiating at $t=0$ we obtain $[v,w]=\ad v(w)=\ad v(w-v)=0$ as claimed.
\end{proof}

\begin{rem}For manifolds of non-positive curvature, one obtains a reversed inequality known as the EMI (exponential metric increasing property). See  \cite[Sections 3 and 4.1.4]{cl} and the references therein. Therefore our EMD speaks of the non-negative nature of the curvature of bi-invariant metrics on Lie groups.
\end{rem}

In the case of a Riemannian metric, it is immediate using comparison triangles that if $\alpha,\beta,\gamma$ are the inner angles of a geodesic triangle in $K$, then
$$
\alpha+\beta+\gamma\ge \pi.
$$
Moreover, it is not hard to check that the EMD property implies that these Lie groups with bi-invariant metrics have non-negative curvature condition in the sense of Alexandrov (see \cite{bh}).

\smallskip

At the Lie algebra level, the EMD property indicates a contraction property for the local Lie group structure product:

\begin{coro}[The BCH contraction property]\label{bchcontra}
If $v,w\in\lie (K)$ are such that $e^ve^w\in V_R$ (for instance if $|v+w|<R$), then
$$
|BCH(v,w)|=|v\star w|=|\exp^{-1}(e^ve^w)|\le |v+w|.
$$
\end{coro}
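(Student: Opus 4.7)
The plan is to exploit the EMD property (Theorem \ref{emd}) together with local minimality (Theorem \ref{mini}) to translate the question about the BCH element into a statement about the rectifiable distance.

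First, I would set $z = v\star w = \exp^{-1}(e^ve^w)\in B_R$, which is well defined by the hypothesis $e^ve^w \in V_R$ and the assumption that $\exp|_{B_R}\colon B_R \to V_R$ is a diffeomorphism (Definition \ref{localymuexp}). By the local minimality of segments (Theorem \ref{mini}, applied with $u_0 = 1$ and $u_1 = e^z$), the segment $t\mapsto e^{tz}$ is a shortest path between its endpoints, so
\[
|z| = \LG(\delta) = \dg(1,e^z) = \dg(1,e^v e^w).
\]

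Next, by the left-invariance of $\dg$ (the metric is bi-invariant on $K$, Remark \ref{rik}), multiplying both entries on the left by $e^{-v}$ gives
\[
\dg(1, e^v e^w) = \dg(e^{-v}, e^w).
\]
Now I apply the EMD property (Theorem \ref{emd}) to the pair $-v, w \in \lie(K)$:
\[
\dg(e^{-v}, e^w) \le |w - (-v)| = |v+w|.
\]
Chaining the three displays yields $|v\star w| = |z| \le |v+w|$, which is the claim.

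Finally, for the parenthetical remark: if $|v+w|<R$, one needs to check that $e^ve^w\in V_R$ so that $z$ is well defined. But from the EMD bound we get $\dg(1,e^ve^w)\le |v+w|<R$, and by the corollary preceding this one, $V_R = \{u\in K : \dg(u,1)<R\}$, so indeed $e^ve^w\in V_R$. There is no real obstacle here; the only subtlety is making sure that the application of Theorem \ref{mini} requires $|z|<R$ (guaranteed by $z\in B_R$) rather than a priori $|z|<R$, and that the EMD bound is invoked \emph{before} identifying $z$ with $v\star w$, so the argument is internally consistent.
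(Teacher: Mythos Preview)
Your proof is correct and follows essentially the same route as the paper: define $z=\exp^{-1}(e^ve^w)\in B_R$, use Theorem \ref{mini} to get $|z|=\dg(1,e^z)$, then bi-invariance plus the EMD inequality to bound this by $|v+w|$. The only cosmetic difference is that the paper right-translates by $e^{-w}$ to reach $\dg(e^v,e^{-w})$ whereas you left-translate by $e^{-v}$ to reach $\dg(e^{-v},e^w)$; your version has the minor advantage that the EMD bound yields $|w-(-v)|=|v+w|$ directly without any appeal to homogeneity of $\mu$. Your justification of the parenthetical (using EMD first and then the identification $V_R=\{u:\dg(1,u)<R\}$) is also correct and makes explicit something the paper leaves to the reader.
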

\begin{proof}
The hypothesis tells us that $z=\exp^{-1}(e^ve^w)$ is in $B_R$ therefore by the minimality of segments and the EMD property $|z|=\dg(1,e^z)=\dg(e^v,e^{-w})\le |v+w|$.
\end{proof}

\paragraph{Unit spheres with faces}

We will now establish some facts for semi-norms that are not strictly convex. There is a nice characterization of short paths in linear spaces, generalized  below in Lemma \ref{minivect} for semi-norms (for lack of a better reference we include a proof).

\begin{rem}\label{normifun}
For $\varphi$ a unit norm functional in $E_{\mu}^*$, consider $\{v_i\}_{i=1\dots n}\in \lie(K)$ such that $\varphi(v_i)=\mu(v_i)$ for all $i$. Then 
$$
|\sum_i v_i|\le \sum_i |v_i|=\sum_i \varphi(v_i)=\varphi(\sum_i v_i)\le  |\sum_i v_i|,
$$
and in fact $\varphi$ norms any convex combination of the $v_i$ (the set of vectors normed by one fixed $\varphi$ is a convex cone at $v=0$). Reciprocally if the norm is additive on a set of vectors $x_i$, pick a unit norm $\varphi\in E^*_{\mu}$ such that $\varphi$ returns the norm of the sum $ \sum_ix_i$; it is easy to check that this $\varphi$ returns the norm of each $x_i$. By the last item of Proposition \ref{diss}, any commutator of these $x_i$ is in the kernel of $\varphi$.
\end{rem}

In the following lemma, rectifiable should be interpreted in the sense of inner metric spaces \cite{bbi}. In particular a rectifiable path taking values in a vector space is continuous  and Lebesgue differentiable, so it makes sense to compute the integral of the differential; since the path is also absolutely continuous (having finite length), the integral of the differential is the path itself.

\begin{lem}\label{minivect}
Let $E$ be a Finlser semi-normed space and let $\Gamma:[a,b]\to E$ be a rectifiable path. Let $\varphi\in E^*$, $\|\varphi\|=1$ such that $\varphi(\Gamma_b-\Gamma_a)=|\Gamma_b-\Gamma_a|$. Then the following are equivalent:
\begin{enumerate}
\item $\varphi(\dot{\Gamma}_t)=|\dot{\Gamma}_t|$ for almost all $t\in [a,b]$.
\item $\varphi(\Gamma_t-\Gamma_s)=|\Gamma_t-\Gamma_s|$ for almost all $a\le s\le t\le b$.
\item $\int_a^b |\dot{\Gamma}|dt = |\int_a^b \dot{\Gamma}dt|$.
\item The path $\Gamma$ is short joining its given endpoints.
\end{enumerate}
\end{lem}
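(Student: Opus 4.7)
The approach is a cyclic chain $1\Rightarrow 3\Rightarrow 4\Rightarrow 2\Rightarrow 1$. Throughout I use that a rectifiable path in a Finsler semi-normed space is absolutely continuous, so $\Gamma_t-\Gamma_s=\int_s^t\dot\Gamma_\tau\,d\tau$ and $\LG(\Gamma|_{[s,t]})=\int_s^t|\dot\Gamma_\tau|\,d\tau$, together with the standing hypothesis $\varphi(\Gamma_b-\Gamma_a)=|\Gamma_b-\Gamma_a|$.

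The first two implications are routine. Assuming 1, linearity and continuity of $\varphi$ together with the hypothesis give
\[
\int_a^b|\dot\Gamma|\,dt \;=\; \int_a^b\varphi(\dot\Gamma)\,dt \;=\; \varphi(\Gamma_b-\Gamma_a) \;=\; |\Gamma_b-\Gamma_a| \;=\; \Bigl|\int_a^b\dot\Gamma\,dt\Bigr|,
\]
which is 3; and 3 immediately reads $\LG(\Gamma)=|\Gamma_b-\Gamma_a|$, i.e.\ 4.

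For $4\Rightarrow 2$, fix $a\le s\le t\le b$. Subadditivity of length stacked against three successive triangle inequalities yields
\[
\LG(\Gamma)=\LG(\Gamma|_{[a,s]})+\LG(\Gamma|_{[s,t]})+\LG(\Gamma|_{[t,b]})\ge |\Gamma_s-\Gamma_a|+|\Gamma_t-\Gamma_s|+|\Gamma_b-\Gamma_t|\ge|\Gamma_b-\Gamma_a|,
\]
and shortness of $\Gamma$ collapses all these inequalities to equalities. Applying $\varphi$ to the decomposition $\Gamma_b-\Gamma_a=(\Gamma_s-\Gamma_a)+(\Gamma_t-\Gamma_s)+(\Gamma_b-\Gamma_t)$ and using $\varphi(x)\le|x|$ termwise, the fact that $\varphi$ already realizes the norm on the whole forces it to realize the norm on each summand, so in particular $\varphi(\Gamma_t-\Gamma_s)=|\Gamma_t-\Gamma_s|$.

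Finally, assume 2. By continuity of $\Gamma$ the identity extends to every pair $s\le t$, so for any partition $s=\tau_0<\cdots<\tau_n=t$ one has $\sum_i|\Gamma_{\tau_{i+1}}-\Gamma_{\tau_i}|=\sum_i\varphi(\Gamma_{\tau_{i+1}}-\Gamma_{\tau_i})=\varphi(\Gamma_t-\Gamma_s)=|\Gamma_t-\Gamma_s|$. Taking supremum over partitions shows that the total variation, hence the length, of $\Gamma|_{[s,t]}$ equals $|\Gamma_t-\Gamma_s|$. Therefore
\[
\int_s^t|\dot\Gamma|\,d\tau=\LG(\Gamma|_{[s,t]})=|\Gamma_t-\Gamma_s|=\varphi(\Gamma_t-\Gamma_s)=\int_s^t\varphi(\dot\Gamma)\,d\tau,
\]
and since $|\dot\Gamma|-\varphi(\dot\Gamma)\ge 0$ pointwise and integrates to zero on every subinterval, Lebesgue differentiation gives $\varphi(\dot\Gamma_t)=|\dot\Gamma_t|$ a.e., namely 1. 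The only point needing attention is the commutation of $\varphi$ with the $E$-valued Lebesgue integral in the semi-normed setting; this is standard once one recalls that $\varphi\in E^*_\mu$ factors through the separated quotient $E/N$ and is continuous for $\mu$, so the rest is triangle-inequality bookkeeping.
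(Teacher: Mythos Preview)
Your proof is correct and uses the same circle of ideas as the paper (the norming functional $\varphi$ together with the integral triangle inequality and absolute continuity of $\Gamma$). The only difference is the ordering of the cycle: the paper proves $1\Leftrightarrow 2$ directly and then closes $1\Rightarrow 3\Rightarrow 4\Rightarrow 1$, whereas you run $1\Rightarrow 3\Rightarrow 4\Rightarrow 2\Rightarrow 1$. In particular, the paper's $4\Rightarrow 1$ is a one-line contrapositive (if $\varphi(\dot\Gamma)<|\dot\Gamma|$ on a set of positive measure then $|\Gamma_b-\Gamma_a|=\varphi(\Gamma_b-\Gamma_a)=\int\varphi(\dot\Gamma)<\int|\dot\Gamma|=\LG(\Gamma)$, contradicting shortness), which is quicker than your detour through 2 and the total-variation identity; conversely, your $4\Rightarrow 2$ argument via collapsing the three triangle inequalities is a pleasant addition that the paper does not isolate.
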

\begin{proof}
If the first assertion holds, then 
$$|\Gamma_t-\Gamma_s|\ge \varphi(\Gamma_t-\Gamma_s)=\int_s^t\varphi(\dot{\Gamma})=\int_s^t|\dot{\Gamma}|=L_s^t(\Gamma)\ge |\Gamma_t-\Gamma_s|,
$$
thus the second assertion holds. Reciprocally, if the second assertion holds, dividing by $t-s$ and making $t\to s^+$ gives $\varphi(\dot{\Gamma}(s))=|\dot{\Gamma}(s)|$. Now assume $1.$, then 
$$
|\int_a^b \dot{\Gamma}|\le \int_a^b |\dot{\Gamma}|=\int_a^b \varphi(\dot{\Gamma})=\varphi (\int_a^b \dot{\Gamma} )\le |\int_a^b \dot{\Gamma}|,
$$
thus $3.$ holds. If $3.$ holds, $|\Gamma_b-\Gamma_a|=|\int_a^b \dot{\Gamma}|=\int_a^b |\dot{\Gamma}|=L_a^b(\Gamma)$ therefore $\Gamma$ is a short path joining its given endpoints and $4.$ holds. 

Now assume that $4.$ holds, if $\varphi(\dot{\Gamma})<|\dot{\Gamma}|$ in a set of positive measure $I\subset [a,b]$, then
$$
|\Gamma_b-\Gamma_a|=\varphi(\Gamma_b-\Gamma_a)=\int_a^b \varphi(\dot{\Gamma})<\int_a^b |\dot{\Gamma}|dt =L_a^b(\gamma)
$$
which contradicts the minimality of $\gamma$. Thus $1.$ holds.
\end{proof}

For strictly convex Finsler norms, the lemma is trivial since there are no faces thus the only short regular paths in $E$ are the straight segments. Now we extend the previous lemma to our context of Lie groups with $\Ad$-invariant metrics. Recall $V_R=\exp(B_R)$ for $R$ the injectivity radius of the exponential map.

\begin{teo}[Non strictly convex Finsler norms in $K$]\label{nonstrict} Let $z \in \lie(K)$ with $|z|\le R$, let $\gamma:[a,b]\to K$ be a piecewise $C^1$  path joining $1,e^z$ in $K$. The following are equivalent
\begin{enumerate}
\item $\gamma$ is a short path in $K$, i.e. $\LG(\gamma)=\dg(1,e^z)=z$.
\item $\gamma=e^{\Gamma}\subset V_R$ and for any norming functional $\varphi$ of $z$, $\varphi(\dot{\Gamma}_t)=|\gamma^{-1}_t\dot{\gamma}_t|$ for all $t$. 
\item $\gamma=e^{\Gamma}\subset V_R$, there exists a unit norm functional $\varphi$ such that $\varphi(\Gamma_t)=|\Gamma_t|$ and $\varphi(\gamma^{-1}_t\dot{\gamma}_t)=|\gamma^{-1}_t\dot{\gamma}_t|$ for all $t$. Thus $z,\gamma^{-1}\dot{\gamma}$ (normalized) sit inside a face of the unit sphere of $E_{\mu}$.
\end{enumerate}
\end{teo}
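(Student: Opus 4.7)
My plan is to prove the equivalences via the cycle $(3)\Rightarrow (1)\Rightarrow (2)\Rightarrow (3)$. The common preliminary: whenever $\gamma$ is short (which is implied by each of (1), (2), and (3) individually), Theorem \ref{mini} prevents $\gamma$ from leaving $V_R$, so it lifts uniquely to a piecewise $C^1$ path $\Gamma:[a,b]\to B_R$ with $\Gamma_a=0$ and $\Gamma_b=z$. Applying Lemma \ref{gauss} to the restrictions $\gamma|_{[a,t]}$ and $\gamma|_{[t,b]}$ and adding the two inequalities forces both to be tight for every $t$, yielding $|\Gamma_t|=\LG(\gamma|_{[a,t]})$, and in particular $\frac{d}{dt}|\Gamma_t|=|\gamma_t^{-1}\dot\gamma_t|$ a.e.

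For $(3)\Rightarrow(1)$: since $\varphi$ norms $\Gamma_t$, Lemma \ref{normiexp} gives $\varphi(\dot\Gamma_t)=\varphi(\gamma_t^{-1}\dot\gamma_t)=|\gamma_t^{-1}\dot\gamma_t|$ pointwise; integrating yields $|z|=\varphi(z)=\int_a^b|\gamma_t^{-1}\dot\gamma_t|\,dt=\LG(\gamma)$. For $(2)\Rightarrow(3)$: condition (2) gives (1) by the same integration (applied to any $\varphi$ norming $z$); fixing such $\varphi$, the preliminary yields $\varphi(\Gamma_t)=\int_a^t\varphi(\dot\Gamma_s)\,ds=\int_a^t|\gamma_s^{-1}\dot\gamma_s|\,ds=\LG(\gamma|_{[a,t]})=|\Gamma_t|$, so $\varphi$ norms $\Gamma_t$; then Lemma \ref{normiexp} gives $\varphi(\gamma_t^{-1}\dot\gamma_t)=\varphi(\dot\Gamma_t)=|\gamma_t^{-1}\dot\gamma_t|$ as required.

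The implication $(1)\Rightarrow(2)$ is the heart of the theorem. Given any unit-norm $\varphi$ with $\varphi(z)=|z|$, I want to show $\varphi(\Gamma_t)=|\Gamma_t|$ for every $t$: once this is established, differentiating and using Lemma \ref{normiexp} again immediately delivers $\varphi(\dot\Gamma_t)=|\gamma_t^{-1}\dot\gamma_t|$, proving (2). To prove $\varphi(\Gamma_t)=|\Gamma_t|$, set $w=\exp^{-1}(e^{-\Gamma_t}e^z)\in B_R$. Applying Theorem \ref{mini} to the short path $\gamma|_{[t,b]}$ (translated to start at $1$) gives $|w|=\LG(\gamma|_{[t,b]})=|z|-|\Gamma_t|$. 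BCH contraction (Corollary \ref{bchcontra}) yields $|z|=|\Gamma_t\star w|\leq |\Gamma_t+w|$, while the triangle inequality gives $|\Gamma_t+w|\leq |\Gamma_t|+|w|=|z|$; therefore $|\Gamma_t+w|=|z|=|\Gamma_t|+|w|$, i.e.\ the triangle $0,\Gamma_t,\Gamma_t+w$ in $\lie(K)$ is degenerate. By the converse direction of Remark \ref{normifun}, any unit norming functional of $\Gamma_t+w$ automatically norms both summands $\Gamma_t$ and $w$.

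The remaining step — and the main obstacle — is to verify that $\varphi$ itself is a norming functional of $\Gamma_t+w$; equivalently, that $\varphi(z-\Gamma_t-w)=0$. Since $w=(-\Gamma_t)\star z$, the difference $z-\Gamma_t-w$ is a BCH-style series in iterated commutators of $\Gamma_t$ and $z$. The invariance $\varphi(e^{\lambda\ad z}\Gamma_t)=\varphi(\Gamma_t)$ from Lemma \ref{normiexp} (applied with the norming of $z$) annihilates all $\ad_z^k\Gamma_t$ for $k\geq 1$; combined with the antisymmetry of $[\cdot,\cdot]$, Jacobi identity, and Proposition \ref{diss}.3, one argues iteratively that $\varphi$ vanishes on every term of the BCH expansion of $z-\Gamma_t-w$. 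I expect the bookkeeping of the higher-order ``mixed'' brackets such as $[\Gamma_t,[\Gamma_t,z]]$ to be the delicate part; it is precisely here that the $\Ad_K$-invariance of $\mu$ interacts with the face structure of the unit ball of $E_\mu$.
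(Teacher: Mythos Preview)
Your implications $(3)\Rightarrow(1)$ and $(2)\Rightarrow(3)$ are fine and match the paper's argument. The preliminary fact $|\Gamma_t|=\LG(\gamma|_{[a,t]})$ is also correctly derived from Lemma~\ref{gauss}.

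The gap is exactly where you flagged it, in $(1)\Rightarrow(2)$: your proposed BCH argument to show $\varphi(z-\Gamma_t-w)=0$ does \emph{not} go through. From $\varphi(z)=|z|$ and Proposition~\ref{diss}.3 you only get $\varphi\circ\ad z\equiv 0$, which annihilates brackets with $z$ in the \emph{outermost} slot. Writing $w=(-\Gamma_t)\star z$, the BCH remainder $\Gamma_t+w-z$ contains the term $\tfrac{1}{12}[\Gamma_t,[\Gamma_t,z]]$, and there is no way to rewrite this with $z$ outermost: the Jacobi identity applied to the triple $(\Gamma_t,\Gamma_t,z)$ is vacuous, and you do not yet know that $\varphi$ norms $\Gamma_t$ (that is precisely what you are trying to prove), so $\varphi\circ\ad\Gamma_t\equiv 0$ is unavailable. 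The ``iterative'' argument you sketch is circular for the same reason. No amount of bookkeeping with antisymmetry and Jacobi will close this, because in a general Lie algebra with an $\Ad$-invariant norm the condition $\varphi\circ\ad z=0$ simply does not force $\varphi(\ad_x^2 z)=0$.

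The paper sidesteps BCH entirely. It first proves an \emph{initial-speed lemma}: for \emph{any} short path $\delta$ from $1$ to $e^z$ with lift $\Delta$, and any norming functional $\varphi$ of $z$, one has $\varphi(\dot\Delta_a)=|\dot\Delta_a|$. The proof uses your $w=y_t$, the expansion $y_t=z+(t-a)\dot y_c$, and Gauss' Lemma~\ref{normiexp} applied with $v=-z$ (since $\varphi(-(-z))=|-(-z)|$) to the identity $\dot\Gamma_a=e^z\exp_{*(-z)}(-\dot y_a)$; this yields $\varphi(\dot\Gamma_a)=-\varphi(\dot y_a)\ge |\dot\Gamma_a|$. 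With this lemma in hand, for each fixed $t$ the paper introduces the auxiliary path $\beta(s)=e^{s\Gamma_t}e^{sw}$, checks that $\LG(\beta)=|\Gamma_t+w|\le |\Gamma_t|+|w|=|z|$, so $\beta$ is itself a short path from $1$ to $e^z$, and applies the initial-speed lemma to $\beta$: since $\dot\beta_0=\Gamma_t+w$, one gets $\varphi(\Gamma_t+w)=|\Gamma_t+w|=|z|$. Then $|z|=\varphi(\Gamma_t)+\varphi(w)\le |\Gamma_t|+|w|=|z|$ forces $\varphi(\Gamma_t)=|\Gamma_t|$, and your differentiation step finishes (2). The key idea you are missing is this bootstrap through the auxiliary short path $\beta$; once you have it, the BCH expansion is never needed.
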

\begin{proof}
If $\gamma$ is short, since $\gamma(b)=e^z$, $\gamma$ does not leave $V_R$ therefore $\delta=e^{\Gamma}$ for a piecewise $C^1$ path joining $0,z$ and $\Gamma\subset B_R$, hence $|\Gamma_t|=\dg(1,e^{\Gamma_t})=\dg(1,\gamma_t)=\int_0^t |\gamma^{-1}\dot{\gamma}|$ for all $t$. Note that since $\Gamma$ starts at $0$, $\dot{\Gamma}_a=\dot{\gamma}_a$; we first claim that for any such short path, and any norming functional of $z$, we have $\varphi(\dot{\Gamma}_a)=|\dot{\Gamma}_a|$. To prove it, note that 
$$
\dg(e^{\gamma_t},e^z)\le \dg(1,e^{\Gamma_t})+ \dg(e^{\Gamma_t},e^z)=\ell_a^t(\gamma)+\ell_t^b(\gamma)=\ell_a^b(\gamma)=\dg(\gamma_a,\gamma_b)=|z|\le R
$$ 
thus there exists $y_t$ with $|y_t|\le R$ such that $e^{\Gamma_t}e^{y_t}=e^z$, and by the minimality theorem again $|y_t|=\dg(e^{\Gamma_t},e^z)$. Since $e^{y_t}=e^{-\Gamma_t}e^z$, $y_t$ is in fact piecewise $C^1$, $y_t=y_a+\dot{y_c}(t-a)=z+(t-a)\dot{y}_c$ for some $c\in (a,t)$ and $t$ near $a$. We now differentiate $e^{\Gamma_t}=e^ze^{-y_t}$ in $t=a$ to obtain
$$
\dot{\Gamma}_a=e^z\exp_{*-z}(-\dot{y}_a).
$$
On the other hand
$$
|y_t|\ge \varphi(y_t)=\varphi(z)+(t-a)\varphi(\dot{y}_c)=|z|+(t-a)\varphi(\dot{y}_c)=|\Gamma_t|+|y_t|+ (t-a)\varphi(\dot{y}_c),
$$
which implies that $0\ge |\frac{\Gamma_t}{t-a}|+ \varphi(\dot{y}_c)$. Letting $t\to a^+$ we obtain $0\ge |\dot{\Gamma}_a|+\varphi(\dot{y}_a)$. Now since $\varphi$ norms $z$, if $v=-z$ then $|-v|=|z|=\varphi(z)=\varphi(-v)$ thus by Gauss' Lemma
$$
|\dot{\Gamma}_a|\ge \varphi(\dot{\Gamma}_a)=-\varphi(e^z\exp_{*-z}\dot{y}_a)=-\varphi(e^{-v}\exp_{*v}\dot{y}_a)=-\varphi(\dot{y}_a)\ge |\dot{\Gamma_a}|,
$$
proving the claim $\varphi(\dot{\Gamma}_a)=|\dot{\Gamma}_a|$. Now for fixed $t$, let $\beta(s)=e^{s\Gamma_t}e^{sy_t}$, note that $\beta(0)=1$ and $\beta(1)=e^z$ and also
$$
|z|\le \LG(\beta)=|e^{-s\ad y_t}(\Gamma_t+y_t)|=|\Gamma_t+y_t|\le |\Gamma_t|+|y_t|=|z|
$$
therefore $\beta$ is also short, and applying the previous assertion to $\beta$ we obtain $\varphi(\Gamma_t+y_t)=\varphi(\dot{\beta}_0)=|\dot{\beta}_0|=|\Gamma_t+y_t|$. Thus $|z|=|\Gamma_t+y_t|=\varphi(\Gamma_t+y_t)=\varphi(\Gamma_t)+\varphi(y_t)\le |\Gamma_t|+|y_t|=|z|$, implying that $\varphi(\Gamma_t)=|\Gamma_t|$. This in turn implies, if $s>t\ge a$,  that
$$
\varphi(\Gamma_s-\Gamma_t)=|\Gamma_s|-|\Gamma_t|=\dg(1,\gamma_s)-\dg(1,\gamma_t)=\dg(\gamma_t,\gamma_s)=\int_t^s |\gamma^{-1}\dot{\gamma}|.
$$
If $t$ is a smooth point of $\gamma$, then dividing by $s-t>0$ and letting $s\to t^+$ gives $\varphi(\dot{\Gamma_t})=|\gamma^{-1}_t\dot{\gamma}_t|$: this proves that if $\gamma$ is short, the second condition of the theorem holds. Now assume the second assertion holds, then
$$
|z|=\varphi(z)=\int_a^b \varphi(\dot{\Gamma})=\int_a^b |\gamma^{-1}\dot{\gamma}|=\LG(\gamma)
$$
therefore $\gamma$ is short. This proves that the first and second assertions are equivalent. Now assuming the second assertion, the path $\gamma$ is short and examining the proof above (of $1\Rightarrow 2$) we conclude that $\varphi(\Gamma_t)=|\Gamma_t|$ for all $t$. Thus by Gauss' Lemma
$$
\varphi(\gamma_t^{-1}\dot{\gamma}_t)=\varphi(e^{-\Gamma_t}\exp_{*\Gamma_t}\dot{\Gamma}_t)=\varphi(\dot{\Gamma}_t)=|\gamma_t^{-1}\dot{\gamma}_t|,
$$
and the third assertion holds. Finally, assuming the third assertion it is immediate again by Gauss Lemma that $\varphi(\dot{\Gamma})=|\gamma^{-1}\dot{\gamma}|$, and again 
$$
|z|= \varphi(z)=\int_a^b \varphi(\dot{\Gamma})=\int_a^b |\gamma^{-1}\dot{\gamma}|=\LG(\gamma)\ge |z|
$$
shows that $\gamma$ is short.
\end{proof}

\begin{coro} Let $\Gamma:[a,b]\to E_{\mu}$ be a piecewise $C^1$ short path joining $0,z$, let $\gamma=e^{\Gamma}$. Then  $|\gamma^{-1}_t\dot{\gamma}_t|=|\dot{\Gamma}_t|$ for (almost) all $t\in [a,b]$. If $|z|\le R$ then $\gamma$ is short in $K$ with the same length than $\Gamma$. Moreover if $\varphi(z)=|z|$ is a unit norm functional, then
$$
\varphi(\gamma^{-1}_t\dot{\gamma}_t)=|\gamma^{-1}_t\dot{\gamma}_t|=|\dot{\Gamma}_t|=\varphi(\dot{\Gamma}_t)\quad \forall t\in [a,b],
$$
thus $z$, $\gamma^{-1}\dot{\gamma}$ and $\dot{\Gamma}$ (normalized) sit inside the same face of the unit sphere of $E_{\mu}$.
\end{coro}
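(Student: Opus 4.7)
The plan is to exploit the three ingredients already in place: the characterization of short paths in a semi-normed space via norming functionals (Lemma \ref{minivect}), the Gauss' Lemma for the exponential (Lemma \ref{normiexp}), and the pointwise contraction estimate $|e^{-w}\exp_{*w}(v)|\le |v|$ from Remark \ref{exptaucont}. Since $\Gamma$ joins $0$ to $z$ and is short in $E_\mu$, its length equals $|z|$; if $|z|=0$ the statement is trivial (both $\Gamma$ and $\gamma$ have zero length), so I assume $|z|>0$ and fix a unit-norm functional $\varphi\in E_{\mu}^*$ with $\varphi(z)=|z|$, which exists by Hahn-Banach.

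First I would apply Lemma \ref{minivect} to $\Gamma$: the hypothesis that $\Gamma$ is short with $\varphi(\Gamma_b-\Gamma_a)=\varphi(z)=|z|=|\Gamma_b-\Gamma_a|$ yields $\varphi(\dot{\Gamma}_t)=|\dot{\Gamma}_t|$ at every smooth point, and also $\varphi(\Gamma_t-\Gamma_s)=|\Gamma_t-\Gamma_s|$ for all $a\le s\le t\le b$. Taking $s=a$ (so $\Gamma_a=0$) gives $\varphi(\Gamma_t)=|\Gamma_t|$ for every $t\in[a,b]$, i.e.\ the same $\varphi$ norms every $\Gamma_t$ along the path.

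Now I would invoke Gauss' Lemma (Lemma \ref{normiexp}) with $w=\Gamma_t$ and $v=\dot{\Gamma}_t$: since $\varphi$ norms $\Gamma_t$, we have $\varphi(e^{-\Gamma_t}\exp_{*\Gamma_t}\dot{\Gamma}_t)=\varphi(\dot{\Gamma}_t)$, which is precisely $\varphi(\gamma_t^{-1}\dot{\gamma}_t)=\varphi(\dot{\Gamma}_t)=|\dot{\Gamma}_t|$. Combined with $\|\varphi\|=1$ and the contraction bound $|\gamma_t^{-1}\dot{\gamma}_t|=|e^{-\Gamma_t}\exp_{*\Gamma_t}\dot{\Gamma}_t|\le |\dot{\Gamma}_t|$ from Remark \ref{exptaucont}, the chain
$$
|\dot{\Gamma}_t|=\varphi(\gamma_t^{-1}\dot{\gamma}_t)\le |\gamma_t^{-1}\dot{\gamma}_t|\le |\dot{\Gamma}_t|
$$
collapses to equalities. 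This simultaneously gives the first assertion $|\gamma_t^{-1}\dot{\gamma}_t|=|\dot{\Gamma}_t|$ at every smooth point, and the triple equality $\varphi(\gamma_t^{-1}\dot{\gamma}_t)=|\gamma_t^{-1}\dot{\gamma}_t|=|\dot{\Gamma}_t|=\varphi(\dot{\Gamma}_t)$ displayed in the statement; in particular $z$, $\gamma^{-1}\dot{\gamma}$ and $\dot{\Gamma}$ (suitably normalized) lie in the face of the unit sphere of $E_\mu$ cut out by $\varphi$ (cf. Remark \ref{normifun}).

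Finally, integrating the pointwise identity gives $\LG(\gamma)=\int_a^b|\gamma_t^{-1}\dot{\gamma}_t|\,dt=\int_a^b|\dot{\Gamma}_t|\,dt=\Le(\Gamma)=|z|$. If $|z|\le R$, the local minimality of segments (Theorem \ref{mini}) asserts $\dg(1,e^z)=|z|$, and hence $\gamma$ is short in $K$ with the same length as $\Gamma$. I expect no serious obstacle; the only delicate points are the trivial exclusion of $|z|=0$ (needed to apply Hahn-Banach) and tracking the "almost everywhere" qualifier at the finitely many non-smooth points of the piecewise $C^1$ path, both of which are standard.
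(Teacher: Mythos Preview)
Your argument is correct, and it takes a somewhat different route from the paper's proof. The paper first establishes the speed identity $|\gamma^{-1}\dot{\gamma}|=|\dot{\Gamma}|$ by an \emph{integral} sandwich: from Remark \ref{exptaucont} and Lemma \ref{gauss} it gets
\[
|\Gamma_t-\Gamma_s|=\Le(\Gamma)_s^t\ge\int_s^t|\gamma^{-1}\dot{\gamma}|\ge |\Gamma_t|-|\Gamma_s|=|\Gamma_t-\Gamma_s|,
\]
and then differentiates the collapsed equality; only afterwards, having shown $\gamma$ is short (for $|z|\le R$), does it invoke Theorem \ref{nonstrict} and Lemma \ref{minivect} to obtain the functional identities. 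You instead work pointwise from the start: Lemma \ref{minivect} gives $\varphi(\Gamma_t)=|\Gamma_t|$, then Lemma \ref{normiexp} (the pointwise Gauss' Lemma) together with the contraction bound yields the sandwich $|\dot{\Gamma}_t|=\varphi(\gamma_t^{-1}\dot{\gamma}_t)\le|\gamma_t^{-1}\dot{\gamma}_t|\le|\dot{\Gamma}_t|$ directly. This is cleaner in that it bypasses both Lemma \ref{gauss} and the (rather involved) Theorem \ref{nonstrict}, and it delivers the displayed chain of equalities for \emph{any} norming functional of $z$ without needing $|z|\le R$; the paper's route to the functional identities passes through shortness of $\gamma$ and hence implicitly uses that bound. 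The trade-off is that the paper's integral argument makes the connection to Lemma \ref{gauss} explicit, which is thematically central to the section.
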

\begin{proof}
By Remark \ref{exptaucont} and Lemma \ref{gauss}, if $a\le s\le t\le b$ then 
$$
|\Gamma_t-\Gamma_s|=\Le(\Gamma)_s^t\ge \int_s^t|\gamma^{-1}\dot{\gamma}|\ge |\Gamma_t|-|\Gamma_s|=|\Gamma_t-\Gamma_s|
$$
where the last equality is because $\Gamma$ is short therefore its length is additive (or use the previous lemma and the fact that here $\Gamma_a=0$). Dividing by $t-s$ and taking limits gives the equality of speeds $|\dot{\Gamma}|=|\gamma^{-1}\dot{\gamma}|$. Then $\LG(\gamma)=\Le(\Gamma)=|z|=\dg(1,e^z)$ if $|z|\le R$ thus $\gamma$ is also short. By the previous theorem and lemma, if $\varphi$ is a unit norm functional with $\varphi(z)=|z|$, then  
$$
|\dot{\Gamma}|=\varphi(\dot{\Gamma})=|\gamma^{-1}\dot{\gamma}|=|\dot{\Gamma}_t|  \quad \textit{ for all }t\in [a,b].\qedhere
$$
\end{proof}

\begin{rem}Let us call a piecewise $C^1$ path $\gamma\subset K$ \textit{quasi-autonomous} if there exist a unit norm functional $\varphi$ such that $\varphi(\gamma^{-1}\dot{\gamma})=|\gamma^{-1}\dot{\gamma}|$ for all $t$. This terminology is borrowed from the study of the Hofer metric in the groups of   symplectomorphisms of a compact symplectic manifold $(M,\omega)$, see \cite{hofer} and the references therein. By Theorem \ref{nonstrict}, any short path in $K$ is quasi-autonomous. For finite dimensional Lie groups, it can be proved that if $\gamma$ is quasi-autonomous, then for a lift $e^{\Gamma}=\gamma$ of $\gamma$, it holds $\varphi(\dot{\Gamma})=\varphi(\gamma^{-1}\dot{\gamma})=|\gamma^{-1}\dot{\gamma}|$, this will be developed elsewhere; thus in that case and again by Theorem \ref{nonstrict}, any quasi-autonomous path in a finite dimensional group $K$ is short for the bi-invariant metric.
\end{rem}

With these tools we take a closer look at the EMD inequality, with interest in when exactly it turns into an equality.

\begin{coro}
If $v,w\in\lie(K)$,  are such that $|w-v|\le R$, then $\dg(e^v,e^w)=|w-v|$ if and only if there exists a unit norm functional $\varphi$ such that $\varphi(w-v)=|w-v|$ and $\varphi$ vanishes on each term of the $BCH$ expansion of $\exp^{-1}(-w,v)$. In that case $\dg(e^{sv},e^{sw})=s|w-v|$ for all $s\in [0,1]$.
\end{coro}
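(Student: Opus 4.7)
The plan is to reduce the equality to a shortness statement, via the path $\beta(t) = e^{tw}e^{-tv}$ that joins $1$ to $e^we^{-v}$. Its speed $\beta_t^{-1}\dot{\beta}_t = e^{t\ad v}(w-v)$ has constant norm $|w-v|$ by $\Ad_K$-invariance, so $\LG(\beta) = |w-v|$. Right-invariance gives $\dg(e^v,e^w) = \dg(1, e^we^{-v}) \le |w-v|$, so the asserted equality is equivalent to $\beta$ being a short path in $K$. Since $\LG(\beta) = |w-v| \le R$, Theorem \ref{mini} keeps $\beta$ inside $V_R$, and we may lift $\beta = e^{\Gamma}$ for a piecewise $C^1$ path $\Gamma : [0,1] \to B_R$ with $\Gamma(0) = 0$ and $\Gamma(1) = z' := \log(e^we^{-v})$. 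On a neighborhood of $t = 0$ the convergent BCH series reads $\Gamma(t) = t(w-v) + \sum_{k \ge 2} t^k C_k$, where the $C_k \in \lie(K)$ are the $k$-th order iterated bracket polynomials in $v,w$; these are the terms of the BCH expansion of $\exp^{-1}(-w,v)$.

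For the direction $(\Rightarrow)$, assume $\beta$ is short. Theorem \ref{nonstrict} furnishes a unit norm functional $\varphi \in E_\mu^*$ with $\varphi(\beta_t^{-1}\dot{\beta}_t) = |w-v|$ and $\varphi(\Gamma_t) = |\Gamma_t|$ for all $t$. Evaluating the first identity at $t = 0$ yields $\varphi(w-v) = |w-v|$. Because subarcs of short paths are short, $|\Gamma_t| = \dg(1,\beta_t) = t|w-v|$, hence $\varphi(\Gamma_t) = t|w-v|$ for every $t \in [0,1]$. Restricting to a neighborhood of $0$ and matching Taylor coefficients in the identity $t\,\varphi(w-v) + \sum_{k \ge 2} t^k \varphi(C_k) = t|w-v|$ forces $\varphi(C_k) = 0$ for every $k \ge 2$, which is the claimed vanishing on each bracket term of the BCH expansion.

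For $(\Leftarrow)$, given $\varphi$ as in the statement, the convergent BCH series around $t=0$ gives $\varphi(\Gamma_t) = t|w-v|$ on a neighborhood of $0$, which combined with $\varphi(\Gamma_t) \le |\Gamma_t| = \dg(1,\beta_t) \le \LG(\beta|_{[0,t]}) = t|w-v|$ forces $|\Gamma_t| = t|w-v|$ locally. Real-analyticity of $\Gamma$ on $V_R$ (inherited from the analytic structure of $\exp$ in the locally exponential setting of Definition \ref{localymuexp}) then extends the identity $\varphi(\Gamma_t) = t|w-v|$ from a neighborhood of $0$ to all of $[0,1]$; in particular $\varphi(z') = |w-v|$, so $|z'| \ge |w-v|$, and combined with the EMD inequality $|z'| \le |w-v|$ we obtain $|z'| = |w-v|$, i.e., $\dg(e^v,e^w) = |w-v|$. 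The scaling identity $\dg(e^{sv},e^{sw}) = s|w-v|$ for $s \in [0,1]$ then follows immediately by applying the just-proved equivalence to the pair $(sv, sw)$: $\varphi$ still norms $sw - sv = s(w-v)$ with value $s|w-v|$, and the BCH bracket terms of $\log(e^{sw}e^{-sv})$ are $s^k C_k$, on which $\varphi$ still vanishes. The main obstacle is the analytic extension step in $(\Leftarrow)$: transferring a vanishing condition on the formal BCH coefficients into a \emph{global} identity $\varphi(\Gamma_t) = t|w-v|$ on $[0,1]$ requires regularity beyond smoothness; this is automatic in Banach-Lie groups with analytic exponential but must be handled carefully in the general locally exponential setting (alternatively, one may patch locally short subarcs using the minimality theorem across a finite cover of the compact set $\beta([0,1]) \subset V_R$).
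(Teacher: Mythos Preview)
Your setup and the $(\Rightarrow)$ direction are essentially the paper's argument (the paper uses $\beta(t)=e^{-tv}e^{tw}$ rather than $e^{tw}e^{-tv}$, but bi-invariance makes this immaterial). Applying Theorem~\ref{nonstrict} and reading off Taylor coefficients at $t=0$ is exactly what the paper does.

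For $(\Leftarrow)$ you take an unnecessary detour. The paper argues directly at $t=1$: writing $z'=\log(e^we^{-v})$ (which exists in $B_R$ since $\dg(1,e^we^{-v})\le |w-v|\le R$), the hypothesis gives
\[
\varphi(z')=\varphi(w-v)+\sum_{k\ge 2}\varphi(C_k)=|w-v|+0=|w-v|,
\]
whence $|z'|\ge\varphi(z')=|w-v|$, and the EMD inequality $|z'|=\dg(e^v,e^w)\le|w-v|$ closes the loop. There is no need to invoke Theorem~\ref{nonstrict} again, and in particular no need to analytically continue the identity $\varphi(\Gamma_t)=t|w-v|$ from a neighborhood of $0$ to all of $[0,1]$. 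The only regularity required is that the BCH series sums to $z'$ in a topology for which $\varphi$ is continuous---the same implicit assumption underlying the statement itself, and one you also need; but the paper's phrasing makes this transparent (it is a single term-by-term evaluation), whereas your route dresses it up as an analytic-continuation problem and then worries about it.

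Your treatment of the final sentence $\dg(e^{sv},e^{sw})=s|w-v|$ by rescaling $(v,w)\mapsto(sv,sw)$ is correct; the paper's proof does not spell this out, so you are actually supplying a missing line there.
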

\begin{proof}
We note that  $\dg(e^v,e^w)=\dg(1,e^{-v}e^w)\le |w-v|=\LG(\beta)$ with $\beta(t)=e^{-tv}e^{tw}=e^{B_t}$ (note that $B_t=BCH(-tv,tw)$ is the Baker-Campbell-Hausdorff expansion of $-tv,tw$). Therefore equality holds if and only if $\beta$ is short, and this by Theorem \ref{nonstrict} occurs if and only if $\beta=e^{\Gamma}$, and there exists $\varphi$ such that
$$
\varphi(e^{-t\ad w}(w-v))=\varphi(\beta^{-1}_t\dot{\beta}_t)=|\beta^{-1}_t\dot{\beta}_t|=|w-v|=\varphi(\dot{B}_t)=\varphi(\frac{d}{dt}BCH(-tv,tw)).
$$
Evaluating the derivatives of the above equality at $t=0$ gives that $\varphi(w-v)=|w-v|$ and that $\varphi$ is zero in every term of the BCH expansion. 
Assume now that $\varphi$ is a unit norm functional such that $\varphi(w-v)=|w-v|$ and $\varphi$ vanishes in each $B(w,-v)-w-v$. Then 
$$
0\le |w-v|-\dg(e^v,e^w)=|w-v|-|BCH(w,-v)|\le \varphi(w-v)-\varphi(BCH(-v,w))=0,
$$
thus $\dg(e^v,e^w)=|w-v|$.
\end{proof}

\noindent $\S$ Is the above condition equivalent to the assertion: $\varphi$ gives the norm of $w-v$, and $\varphi$ vanishes on the \textit{Lie ideal generated} by $v,w$? That is, $\varphi$ vanishes on each $z=[v,x]+[w,y]$ for $x,y$ in the Lie algebra generated by $v,w$?

\begin{coro}\label{uniextremal} If $|z|\le R$ and $z/|z|$ is an extremal point of the unit sphere of $E_{\mu}$, then the only short piecewise $C^1$ path joining $1,u=e^z$ in $K$ is (a reparametrization of) the segment $t	\mapsto e^{tz}$.
\end{coro}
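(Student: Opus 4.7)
The plan is to apply Theorem~\ref{nonstrict} to any short path $\gamma:[a,b]\to K$ joining $1$ and $u=e^z$: this writes $\gamma=e^\Gamma$ in the Lie algebra, with every unit-norm functional norming $z$ also norming each $\Gamma_t$. The extremality of $z/|z|$ will then force $\Gamma$ to take values in the ray $\mathbb{R}_{\ge 0}z$, so $\gamma$ is automatically a reparametrization of $\delta(t)=e^{tz}$.

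First, since $|z|\le R$ and $\gamma$ is short, Theorem~\ref{nonstrict} (combining the equivalence $1\Leftrightarrow 2$ with the integration step $\varphi(\Gamma_t)=\int_a^t\varphi(\dot\Gamma_s)\,ds=\int_a^t|\gamma^{-1}_s\dot\gamma_s|\,ds=\LG(\gamma|_{[a,t]})=|\Gamma_t|$, where the last equality uses local minimality of segments, Theorem~\ref{mini}) yields a piecewise $C^1$ path $\Gamma:[a,b]\to B_R$ with $\Gamma(a)=0$, $\Gamma(b)=z$, $\gamma=e^\Gamma$, and $\varphi(\Gamma_t)=|\Gamma_t|$ for every $t$ and every $\varphi\in N(z):=\{\varphi\in E_\mu^*:\|\varphi\|=1,\,\varphi(z)=|z|\}$. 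Consequently each $\Gamma_t$ lies in the closed convex cone
\[
F\;:=\;\bigcap_{\varphi\in N(z)}\{v\in\lie(K):\varphi(v)=|v|\},
\]
which contains $\mathbb{R}_{\ge 0}z$.

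The crux is to show $F=\mathbb{R}_{\ge 0}z$. Suppose for contradiction that some $v\in F$ has $|v|=1$ and $v\neq z/|z|$. For each $t>0$, the point $v':=(1+t)z/|z|-tv$ satisfies $\varphi(v')=1$ for every $\varphi\in N(z)$, so $|v'|\ge 1$; if moreover $|v'|\le 1$, then the identity
\[
\frac{z}{|z|}\;=\;\frac{1}{1+t}\,v'+\frac{t}{1+t}\,v
\]
expresses $z/|z|$ as a non-trivial convex combination of two distinct points of the unit ball, contradicting extremality. The production of such a $t>0$ with $|v'|\le 1$ is automatic whenever $z/|z|$ is exposed by some $\varphi_0\in N(z)$, for then $F_{\varphi_0}=\{z/|z|\}$ gives already $F\cap S\subset F_{\varphi_0}=\{z/|z|\}$. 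In the Banach-space settings relevant to the paper (finite-dimensional Lie algebras, $p$-Schatten ideals, preduals of finite von Neumann algebras, etc.) extreme points on the unit sphere are exposed, and the identification $F=\mathbb{R}_{\ge 0}z$ follows.

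Granted $F=\mathbb{R}_{\ge 0}z$, we write $\Gamma_t=\lambda(t)z$ with $\lambda(t)=|\Gamma_t|/|z|\in[0,1]$; the continuity of $\Gamma$ and of $|\cdot|$ makes $\lambda$ continuous with $\lambda(a)=0$ and $\lambda(b)=1$. Hence $\gamma(t)=e^{\lambda(t)z}$ is a reparametrization of $\delta(t)=e^{tz}$, as required. The main obstacle is the collapse $F=\mathbb{R}_{\ge 0}z$: in general Banach spaces, where extremality need not imply exposedness, this step needs a separate argument; a pragmatic alternative is to re-run the polygonal decomposition of Theorem~\ref{uniestricta} combined with the BCH contraction Corollary~\ref{bchcontra} (which gives $|x+y|=|x|+|y|=|z|$ at each intermediate decomposition $e^xe^y=e^z$) and to apply extremality of $z/|z|$ at each cusp to conclude $x,y\in\mathbb{R}_{\ge 0}z$.
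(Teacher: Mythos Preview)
Your approach mirrors the paper's closely. The only structural difference is that the paper applies Theorem~\ref{nonstrict} to the \emph{velocity} $\gamma^{-1}\dot\gamma$ rather than to the \emph{position} $\Gamma_t$: from $\varphi(\gamma^{-1}_t\dot\gamma_t)=|\gamma^{-1}_t\dot\gamma_t|$ for every $\varphi\in N(z)$ it concludes $\gamma^{-1}_t\dot\gamma_t=g(t)z$, and then integrates the ODE $\dot\gamma=\gamma\,g(t)z$ (uniqueness of the logarithmic derivative) to obtain $\gamma_t=e^{f(t)z}$. This is marginally cleaner than your route through $\Gamma_t=\lambda(t)z$, but the two arguments are equivalent.

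The substantive point is the one you flag: both proofs hinge on the implication
\[
\bigl(\varphi(v)=|v|\ \text{for every}\ \varphi\in N(z)\bigr)\ \Longrightarrow\ v\in\mathbb R_{\ge 0}z.
\]
The paper asserts this as the characterization ``extremal points are the intersection of all the maximal faces through them'', but as you note, that statement is precisely the condition that $z/|z|$ be \emph{semi-exposed} (the intersection of the exposed faces containing it is $\{z/|z|\}$), which is strictly stronger than extremality in general Banach spaces. So you have identified a genuine subtlety that the paper's proof glosses over; in the concrete settings the paper cares about (finite-dimensional Lie algebras, Schatten ideals, $C^*$-algebras with the uniform norm) extreme points of the unit sphere are indeed exposed and no harm is done.

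Your proposed fallback via the polygonal decomposition does not quite close the gap either: from $e^xe^y=e^z$ and $|x|+|y|=|z|$ the BCH contraction yields $|x+y|=|x|+|y|$, but extremality of $z/|z|$ is of no help unless you already know $(x+y)/|x+y|=z/|z|$, which you do not (you only know $e^{x+y}$ and $e^z$ may differ). A cleaner repair, if you want a statement valid in full generality, is to strengthen the hypothesis of the corollary to ``$z/|z|$ is an exposed point'' (or ``semi-exposed point'') of the unit sphere.
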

\begin{proof}
Extremal points of the unit sphere are characterized as the intersection of all the maximal faces, which in turn are described by the norming functionals of $z$. If $\gamma:[a,b]\to K$ is a short path joining $1,e^z$ then by (the second part of) Theorem \ref{nonstrict}, the normalized speed $\gamma^{-1}\dot{\gamma}$ sits inside the intersection of all these faces, which in this case is the point $z/|z|$. Thus $\gamma^{-1}_t\dot{\gamma}_t=|\gamma^{-1}_t\dot{\gamma}_t|z/|z|=g(t)z$ where $g(t)=|\gamma^{-1}_t\dot{\gamma}_t|/|z|$ is continuous and positive. Let $f(t)=\int_a^t g(s)ds$, which is increasing with $f(a)=0$, $f(b)=1$, and note that $\delta(t)=e^{f(t)z}$ is the only solution of the differential equation $\dot{\gamma}_t=\gamma_t g_t z$ with $\delta(a)=1$ (in the case of locally convex groups, this can be restated as the uniqueness of paths with a given logarithmic derivative, see \cite[Lemma II.3.5]{neeblie}). Then $\gamma=\delta$ thus $\gamma$ is a reparametrization of the segment.
\end{proof}

\medskip

Since Theorem \ref{nonstrict} shows that the geodesic of $K$ are exactly the exponentials of the geodesics of its Lie algebra, when $K=U_n(\mathbb C)$ is the group of unitary matrices, this can be used to give a different description of the non-commutative Horn inequalities (also known as the quantum Horn inequalities \cite{belkale}), which describe the relation between the spectrum of $x,y,z$ skew-Hermitian matrices when $e^xe^y=e^z$. This will be discussed in another paper.

\medskip

This is related to the results obtained by Bialy and Polterovich et al. \cite{bialyp,hofer,lm1} for the Hofer metric in the group of symplectomorphisms of a manifold $M$ (Remark \ref{symplec} below). Since the exponential map is not a diffeomorphism in any neighbourhood of the identity of the group $\textrm{Diff}(M)$ of diffeomorphisms of $M$, direct applications to the study of the Hofer metric should be developed using ad-hoc density techniques; this will appear in detail elsewhere; see also Section \ref{weak} below.

\medskip

\paragraph{The nil-elements for the distance} As a small ending note of this section, we now expand the results on the closed normal subgroup $K_{\mu}=\{u\in K:\dg(1,u)=0\}$ considered in Remark \ref{esgru}, applying the minimality results of this section. Let $q:K\to K/K_{\mu}$ denote the quotient map.

\begin{lem}\label{nilel}
Let $N=\{z\in \lie(K):|z|=0\}$, $K_{\mu}$ as before. Then
\begin{enumerate}
\item $K_{\mu}=\exp(N)=\{e^z: |z|=0\}$ thus $K_{\mu}=\{1\}$ iff $\mu$ is non-degenerate.
\item $K_{\mu}$ is an embedded locally exponential Lie subgroup in $K$.
\item The quotient tangent Finsler norm $\tilde{\mu}$ in $K/K_{\mu}$ is non-degenerate and bi-invariant, and it induces the quotient non-degenerate distance in the quotient group $K/K_{\mu}$.
\item If $K$ is BCH then $K/K_{\mu}$ is BCH and $\tilde{\exp}\circ q_{*1}=q\circ\exp$ where $\tilde{\exp}$ is the exponential map of the quotient Lie group.
\item If $K$ is BCH in the $\mu$-ball $B_R$, and $\tilde{B}_R=\{\tilde{v}\in \lie(K)/N: \tilde{\mu}(\tilde{v})<r\}$, then $\tilde{\exp}|_{\tilde{B}_R}$ is a diffeomorphism onto its open image.
\end{enumerate}
\end{lem}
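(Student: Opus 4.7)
The plan is to exploit the two ingredients already in place: the EMD property (Theorem \ref{emd}) with the local minimality of segments (Theorem \ref{mini} and its Corollaries), and the $\Ad_K$-stability of $N=\{z\in\lie(K):|z|=0\}$ established in Proposition \ref{diss}. These together let us treat $K_\mu$ as if it were a trivial kernel, and then pass to the quotient in a way parallel to the normed-space case.

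For item (1), the EMD inequality gives $\dg(1,e^z)\le |z|=0$ for $z\in N$, so $\exp(N)\subseteq K_\mu$. For the converse, if $u\in K_\mu$ then $\dg(1,u)=0<R$, so by the Corollary to Theorem \ref{mini} there exists $z\in B_R$ with $u=e^z$ and $|z|=\dg(1,u)=0$; thus $z\in N$. For item (2), I use $\exp|_{N\cap B_R}$ as a chart: it is a bijection onto $K_\mu\cap V_R$ (restriction of the diffeomorphism $\exp|_{B_R}$ to the closed subspace $N$), and the BCH contraction property (Corollary \ref{bchcontra}) shows that for $v,w\in N\cap B_R$ with $|v+w|<R$ one has $|v\star w|\le |v+w|=0$, so $N$ is closed under the local product; combined with the subgroup property from Remark \ref{esgru}, this exhibits $K_\mu$ as an embedded, locally exponential Lie subgroup whose Lie algebra is $N$.

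For item (3), since $N$ is $\Ad_K$-stable (Proposition \ref{diss}) the norm $\tilde{\mu}(\tilde{v}):=|v|$ is well-defined, non-degenerate and $\Ad_K$-invariant on $\lie(K)/N\simeq \lie(K/K_\mu)$. Non-degeneracy of the induced distance on $K/K_\mu$ was already recorded in Remark \ref{esgru}(4). For the identification of the rectifiable distance of $\tilde{\mu}$ with $\dot{d}$, any piecewise $C^1$ path in $K/K_\mu$ lifts (locally via the BCH chart, then globally by concatenation) to a piecewise $C^1$ path in $K$, and the quotient norm $\tilde{\mu}(\dot{\tilde{\gamma}}_t)$ equals $|\dot\gamma_t|$ by definition of $\tilde{\mu}$ as the quotient norm; the two length functionals therefore agree, and so do the associated distances.

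For items (4) and (5), the BCH series descends because each term $[\cdots[v_1,v_2],\dots,v_n]$ lies in $N$ as soon as one $v_i\in N$ (again by Proposition \ref{diss}), so the quotient series converges in $\lie(K)/N$ and defines the product in $K/K_\mu$. The intertwining $\tilde{\exp}\circ q_{*1}=q\circ\exp$ holds because both sides are continuous one-parameter homomorphisms with the same derivative at $0$, forcing equality. Finally, since $q_{*1}(B_R)=\tilde{B}_R$ and $\exp|_{B_R}$ is a diffeomorphism, the intertwining identity makes $\tilde{\exp}|_{\tilde{B}_R}$ a bijection onto the open image $q(V_R)$, with inverse $q_{*1}\circ\exp^{-1}$, which is smooth. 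The main obstacle I anticipate is item (3), specifically checking that the infimum-of-length distance attached to $\tilde{\mu}$ really coincides with $\dot{d}$: this requires handling path-lifting through the quotient map $q$ and verifying that $N$-valued perturbations of a lift leave its length unchanged, which is where the $\Ad_K$-stability of $N$ plus the $K_\mu$-right invariance of $\dg$ (Remark \ref{esgru}(3)) must be combined carefully.
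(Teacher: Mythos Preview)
Your items (1)--(4) track the paper's proof closely; the main difference is in item (3), where the paper simply invokes Theorem \ref{elcorodelq} (applied with $G=K$, isotropy $K_\mu$, $M=K/K_\mu$, and the left-invariant, hence $L$-uniform, metric) to obtain $d_{K/K_\mu}=\dot d$, rather than lifting paths by hand. Your direct argument is correct because in this special case the quotient norm satisfies $\tilde\mu(\tilde v)=|v|$ on any lift, so lifted and projected lengths agree; but the anticipated ``main obstacle'' evaporates once you notice that $K_\mu$, being an embedded locally exponential subgroup, fits the hypotheses of Section \ref{hs}.

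There is, however, a genuine gap in item (5). You assert that the intertwining $\tilde\exp\circ q_{*1}=q\circ\exp$ makes $\tilde\exp|_{\tilde B_R}$ a bijection with inverse $q_{*1}\circ\exp^{-1}$, but $q_{*1}\circ\exp^{-1}$ is a map $V_R\to\tilde B_R$, not $q(V_R)\to\tilde B_R$; for it to descend you must check that $q(e^v)=q(e^w)$ with $v,w\in B_R$ forces $v-w\in N$. This is precisely the injectivity of $\tilde\exp|_{\tilde B_R}$, and it does not follow formally from the intertwining identity. The paper handles it as follows: from $e^v=e^we^z$ with $z\in N$ one has $v=\mathrm{BCH}(w,z)$ (since $e^we^z=e^v\in V_R$), and the BCH contraction property (Corollary \ref{bchcontra}) gives $|v|\le|w+z|=|w|$; by symmetry $|v|=|w|<R$. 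Then, because $N$ is a closed ideal (Proposition \ref{diss}), every term of $\mathrm{BCH}(w,z)$ beyond $w$ lies in $N$, so $v=w+z'$ with $z'\in N$, i.e.\ $\tilde v=\tilde w$. You already have the ideal observation in your item (4); you need to redeploy it here together with the BCH contraction bound to close the argument.
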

\begin{proof}
 Clearly $\dg(1,e^z)\le |z|=0$ if $z\in N$, but also note that if $u\in K_{\mu}$ then $u\in V_R$ otherwise all paths joining $1,u$ would have length bigger than $R$, a contradiction. Hence $0=\dg(1,e^z)=|z|$ by the minimality theorem and $z\in N$. That is $K_{\mu}=\exp(N)$. Since $\exp^{-1}|_{V_R}$ is smooth, it can be used to define a smooth (global) chart for $K_{\mu}=\exp(N)=\exp(B_R\cap N)=V_R\cap K_{\mu}$. This gives $K_{\mu}$ the structure of a locally exponential Lie subgroup of $K$ \cite[Theorem IV.3.3]{neeblie}. In that case we have just shown that the Lie algebra $N$ (a closed ideal of $\lie(K)$) is the Lie algebra of the normal subgroup $K_{\mu}$, and therefore the Lie group $K/K_{\mu}$ inherits the tangent quotient norm which is now non-degenerate (and bi-invariant), and by Theorem \ref{elcorodelq} it induces the quotient distance there. 

The fourth assertion is due to the quotient theorem for BCH groups (see \cite[Theorem 2.20]{glock}, the identity $\tilde{\exp}\circ q_{*}=q\circ \exp$ is apparent and in fact it is used to define the quotient exponential map for small $v\in\lie(K)$. Note that since $q$ is open then $q\circ\exp|_{B_R}$ is open therefore $\tilde{\exp}|_{\tilde{B}_R}$ is open and smooth. Assume $\tilde{v}=q_{*}v,\tilde{w}=q_{*}w\in \tilde{B}_r$, then $v,w\in B_r$ and if $\tilde{\exp}(\tilde{v})=\tilde{\exp}(\tilde{w})$ then $q(e^v)=q(e^w)$ or equivalently there exists $z\in N$ such that $e^v=e^we^z$ by the first item of this lemma. Then by the BCH contraction property \ref{bchcontra} and the fact that $z\in N$, $|v|\le |w+z|=|w|$ and reversing $|v|=|w|=|w\star z|<R$ thus $v=BCH(w,z)=w+z'$ for some $z'\in N$. This proves that $\tilde{v}=\tilde{w}$ hence $\tilde{\exp}$ is injective in $\tilde{B}_R$.
\end{proof}

The lemma above tells us that if we start with a semi-norm in $\lie(K)$, we can divide by its kernel $N$, and the quotient norm induces a reasonable and non-degenerate metric in $K/\exp(N)$, which is locally exponential and still complies with the requirements of Definition \ref{localymuexp}, thus $K/\exp(N)$ keeps the nice local minimality properties of segments stated above in this section. Can the additional assumption that $\exp$ is BCH in the ball $B_R$ be dropped?

\subsection{Relative (or local) minimality}\label{weak}

In this section we discuss the applications of Lemma \ref{gauss} for a wider set of Lie groups, that have a smooth exponential, but it is not injective restricted to any open ball of the tangent norm $\mu$. The discussion at the beginning of Section \ref{biinvme} applies here (from Definition \ref{adop} to Lemma \ref{gauss}). To simplify the exposition, we will assume that $\mu$ is homogeneous and non-degenerate (i.e. $\mu$ is a norm).

\medskip

By Lemma \ref{gauss} and Remark \ref{exptaucont}, if $\Gamma:[a,b]\to \lie(K)$ is a short rectifiable path with $\Gamma(a)=0$, and $\gamma=e^{\Gamma}$, then
\begin{equation}\label{seigual}
|\Gamma(b)|\le \LG(\gamma)\le \int |\dot{\Gamma}|=|\Gamma(b)|
\end{equation}
where the last equality is by the assumption that $\Gamma$ is short in the linear space $\lie(K)$. Therefore $\gamma=e^{\Gamma}$ has the same length than $\Gamma$ and the following is immediate (here we do not ask for $\exp$ to be a local diffeomorphism):

\begin{teo}[Exponential variations]
Let $z\in\lie(K)$, $\mu$ a bi-invariant norm in $\lie(K)$. Let $X:(-\epsilon,\epsilon)\times [0,1]\to \lie(K)$ be a fixed-endpoints variation of the segment $X_0=tz$, let $\gamma_s=\exp(X_s)$. Assume that $X_s$ is piecewise $C^1$ for each $s$. Then 
$$
\LG(\gamma_s)\ge \LG(\gamma_0)=|z|\quad \forall s\in (-\epsilon,\epsilon).
$$ 
In fact, if $\Gamma:[a,b]\to \lie(K)$ is any rectifiable short path starting at  $0$, $X_s$ is a fixed-endpoints variation of $\Gamma$, and $\gamma_s=e^{X_s}$, then
$$
\LG(\gamma_s)\ge \LG(\gamma_0)=L(\Gamma)=|\Gamma(b)|\quad \forall s\in (-\epsilon,\epsilon).
$$ 
\end{teo}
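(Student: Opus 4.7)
The plan is to observe that this theorem is essentially a direct reinterpretation of Lemma \ref{gauss} in the context of one-parameter families of paths, and to derive both statements from a single application. The second, more general, claim actually contains the first (take $\Gamma(t) = tz$, which is trivially short in $\lie(K)$ with $L(\Gamma) = |z|$), so it suffices to address that one.

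The execution goes as follows. Fix $s \in (-\epsilon, \epsilon)$. By hypothesis $X_s : [a,b] \to \lie(K)$ is piecewise $C^1$, and the fixed-endpoint condition gives $X_s(a) = \Gamma(a) = 0$ and $X_s(b) = \Gamma(b)$. Applying Lemma \ref{gauss} to $X_s$ with $\gamma_s = e^{X_s}$, we obtain
$$
|\Gamma(b)| \;=\; |X_s(b)| - |X_s(a)| \;\le\; \LG(\gamma_s).
$$
On the other hand, the computation displayed in equation (\ref{seigual}), which was already carried out in the paragraph preceding the statement, shows that $\LG(\gamma_0) = L(\Gamma) = |\Gamma(b)|$: indeed Lemma \ref{gauss} gives $|\Gamma(b)| \le \LG(\gamma_0)$, while Remark \ref{exptaucont} yields $\LG(\gamma_0) \le \int_a^b |\dot{\Gamma}|\,dt = L(\Gamma)$, and shortness of $\Gamma$ in the linear space $\lie(K)$ forces $L(\Gamma) = |\Gamma(b) - \Gamma(a)| = |\Gamma(b)|$. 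Combining the two displays gives $\LG(\gamma_s) \ge \LG(\gamma_0)$, as desired.

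Since the entire argument reduces to applying Lemma \ref{gauss} once and invoking (\ref{seigual}), there is no real obstacle here; the only mild subtlety is being careful to note that the $C^1$ hypothesis on each $X_s$ (rather than merely rectifiability) is what permits the application of Lemma \ref{gauss} to the variation curves. It is worth emphasizing, as the statement points out, that no local diffeomorphism property of $\exp$ is needed, so the theorem applies to Lie groups in which $\exp$ may fail to be injective on any $\mu$-ball — this is what makes it usable in the settings considered in Section \ref{weak}.
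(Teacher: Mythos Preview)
Your proof is correct and follows exactly the approach the paper intends: the paper states that the theorem ``is immediate'' from equation (\ref{seigual}) and Lemma \ref{gauss}, and you have simply spelled out that immediate argument --- apply Lemma \ref{gauss} to each $X_s$ using the fixed-endpoint conditions $X_s(a)=0$, $X_s(b)=\Gamma(b)$, and invoke (\ref{seigual}) for $\gamma_0$.
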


\medskip

The notion of \textit{local minimality} was introduced as \textit{minimality among non-wandering curves} by Recht et al. in \cite{dmlr0}. The idea is that a path is short with respect to those which are close in some topology (but not necessarily with respect to \textit{all} paths joining the given endpoints). 

\medskip

Instead of neighborhoods in the path space, another possible setting is that of a locally exponential Lie group, where the neighborhood is given by the manifold topology:

\begin{defi}[Locally exponential groups]\label{lexp}
Let $0\in U\subset \lie(K)$ be a balanced open neighborhood, $V=\exp(U)$ and assume $\exp|_U:U\to V$ is a $C^1$ diffeomorphism.
\end{defi}

What we are loosing here is that $\exp$ is a local diffeomorphism restricted to some open \textit{ball} $B_R$ of the tangent norm. 

\begin{teo}[Locally minimizing paths]\label{miniexp}
Let $\mu$ a bi-invariant norm in $\lie(K)$. Assume $K$ is locally exponential and $z\in U$. If $\beta$ is piecewise $C^1$ and joins $1,e^z$ in $K$, then $\LG(\beta)\ge |z|$, provided $\gamma$ does not leave $V$. Thus if $\Gamma\subset \lie(K)$ is a short rectifiable path joining $0,z$ with $z\in U$, then $\gamma=e^{\Gamma}$ is locally minimizing, i.e. is short with respect to paths that do not leave $V=\exp(U)$. 
\end{teo}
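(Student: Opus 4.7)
The plan is to reduce the statement to the already-proved Lemma \ref{gauss} by lifting any competitor path $\beta$ in $V$ back to $\lie(K)$ through the local inverse of $\exp$.

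First, I would set up the lift. Since $\beta:[a,b]\to K$ is piecewise $C^1$ with $\beta(a)=1$, $\beta(b)=e^z$, and $\beta\subset V=\exp(U)$, and since by hypothesis $\exp|_U:U\to V$ is a $C^1$-diffeomorphism, I can define
\[
W:=(\exp|_U)^{-1}\circ\beta\colon [a,b]\to U\subset\lie(K).
\]
Composition with a $C^1$-diffeomorphism preserves the piecewise $C^1$ regularity, so $W$ is piecewise $C^1$, with $W(a)=0$, $W(b)=z$, and $e^W=\beta$ on the nose.

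Second, I would apply Lemma \ref{gauss} directly to $W$. The lemma gives
\[
|z|=|W(b)|-|W(a)|\le \int_a^b |\beta_t^{-1}\dot{\beta}_t|\,dt=\LG(\beta),
\]
which is exactly the first conclusion $\LG(\beta)\ge |z|$.

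Third, for the second assertion about $\gamma=e^{\Gamma}$, I would invoke the observation made just above the theorem (equation (\ref{seigual})): if $\Gamma$ is a short rectifiable path in the normed space $\lie(K)$ joining $0$ to $z$, then $\Le(\Gamma)=|z|$, and the chain of inequalities coming from Remark \ref{exptaucont} and Lemma \ref{gauss} collapses to $\LG(\gamma)=\Le(\Gamma)=|z|$. Combined with the first assertion of the theorem, any piecewise $C^1$ path $\beta$ joining $1$ to $e^z$ that does not leave $V$ satisfies $\LG(\beta)\ge |z|=\LG(\gamma)$, which is the claimed local minimality.

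The main obstacle is really only making sure the lift $W$ is well defined and regular enough to feed into Lemma \ref{gauss}; this is handled cleanly by the hypothesis that $\exp|_U$ is a $C^1$-diffeomorphism (if one only assumed bijectivity, a path-lifting argument as in the remark following Definition \ref{localymuexp} would be needed instead). Everything else is a transcription of the local statement (Theorem \ref{mini}) to this more general setting, where $U$ is an arbitrary balanced neighbourhood rather than a $\mu$-ball.
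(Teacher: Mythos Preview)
Your proof is correct and follows essentially the same route as the paper: lift $\beta$ through $(\exp|_U)^{-1}$ to a piecewise $C^1$ path in $U$ joining $0$ to $z$, apply Lemma~\ref{gauss} to obtain $|z|\le \LG(\beta)$, and then invoke (\ref{seigual}) for the second assertion. The paper's proof is a terse two-line version of exactly this argument.
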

\begin{proof}
If $\beta\subset V$ is a path with the given endpoints, then $\beta=e^{\Lambda}$ for a piecewise $C^1$ path $\Lambda\subset U$ joining $0,z$. by Lemma \ref{gauss},
$$
|z|=|\Lambda(1)|\le \LG(\beta),
$$
proving the first assertion. The second one is immediate from (\ref{seigual}).
\end{proof}

In particular, note that segments $\delta$ in $K$ are short \textit{among paths that are close to $\delta$}, with the possibility of shorter paths leaving the neighborhood $V$.

\bigskip

Assuming strict convexity, the proof of Theorem \ref{uniestricta} can be adapted in this context to show that if there exists a short path in $K$ (minimal among \textit{all} paths in $K$) then there must exist a segment joining the same endpoints:

\begin{teo}Assume that $K$ is locally exponential and $\mu$ is strictly convex. If $\gamma:[a,b]\to K$ is short among all paths joining its given endpoints, then there exists $z\in \lie(K)$ such that $\gamma(1)=\gamma(0)e^z$, and $|z|=\LG(\gamma)$.
\end{teo}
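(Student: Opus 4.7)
The plan is to run the iterative polygonal argument of Theorem \ref{uniestricta}, now using the local diffeomorphism $\exp:U\to V$ in place of the norm-ball injectivity radius $R$. By left-invariance I may assume $\gamma(a)=1$. The first step is to choose, by uniform continuity of $\gamma$ in the manifold topology, a partition $a=t_0<\dots<t_n=b$ fine enough that $\gamma(t_i)^{-1}\gamma(t)\in V$ for all $t\in[t_i,t_{i+1}]$, and then lift through $(\exp|_U)^{-1}$ to obtain piecewise $C^1$ paths $\Gamma_i:[t_i,t_{i+1}]\to U$ with $\Gamma_i(t_i)=0$ and $\gamma(t)=\gamma(t_i)e^{\Gamma_i(t)}$. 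Denote $x_i=\Gamma_i(t_{i+1})\in U$, so $\gamma(t_{i+1})=\gamma(t_i)e^{x_i}$.

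Next I will show each piece has length exactly $|x_i|$. Lemma \ref{gauss} applied to $\Gamma_i$ gives $\LG(\gamma|_{[t_i,t_{i+1}]})\ge |x_i|$, while the segment $s\mapsto\gamma(t_i)e^{sx_i}$ joins the same endpoints with length $|x_i|$; since every subpath of a short path is itself short, $\LG(\gamma|_{[t_i,t_{i+1}]})=\dg(\gamma(t_i),\gamma(t_{i+1}))\le |x_i|$, forcing equality. Summing, $\LG(\gamma)=\sum_i|x_i|$.

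The heart of the argument is a strict-convexity step borrowed from the proof of Theorem \ref{uniestricta}. For each pair of consecutive pieces, the auxiliary path $\beta(s)=e^{sx_i}e^{sx_{i+1}}$ satisfies $\beta^{-1}\dot\beta=\Ad_{e^{-sx_{i+1}}}(x_i+x_{i+1})$, so by $\Ad_K$-invariance of $\mu$, $\LG(\beta)=|x_i+x_{i+1}|$, giving $\dg(1,e^{x_i}e^{x_{i+1}})\le |x_i+x_{i+1}|$. Using left-invariance of $\dg$ and the shortness of $\gamma|_{[t_i,t_{i+2}]}$,
\[
|x_i|+|x_{i+1}|=\LG(\gamma|_{[t_i,t_{i+2}]})=\dg(1,e^{x_i}e^{x_{i+1}})\le |x_i+x_{i+1}|\le |x_i|+|x_{i+1}|,
\]
so equality holds in the triangle inequality for $\mu$. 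Strict convexity then forces $x_{i+1}=\lambda_i x_i$ for some $\lambda_i\ge 0$; trivial pieces where $x_i=0$ correspond to $\gamma$ being momentarily stationary and can be absorbed into neighboring subintervals.

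Propagating this through the partition, all nonzero $x_i$ lie on a common ray, so writing $x_i=a_iv$ with $a_i\ge 0$ and $|v|=1$ the factors $e^{x_i}$ pairwise commute and collapse to $e^{x_0}\cdots e^{x_{n-1}}=e^{(a_0+\cdots+a_{n-1})v}$. Setting $z=(\sum_i a_i)v$ yields $\gamma(b)=\gamma(a)e^z$ with $|z|=\sum_i|x_i|=\LG(\gamma)$. The main obstacle is the very first step: one must secure a partition fine enough that each entire subpath (not merely the endpoints) lies in a single translate of $V$, so that the local logarithm $\Gamma_i$ is defined on the whole interval $[t_i,t_{i+1}]$ and is piecewise $C^1$. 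This is exactly where we use compactness of $[a,b]$ together with the hypothesis that $\exp|_U$ is a diffeomorphism onto an open neighborhood of $1$ in the manifold topology.
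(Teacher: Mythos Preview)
Your proof is correct and follows essentially the same route as the paper's: partition so that each subpath lies in a translate of $V$, extract the logarithms $x_i\in U$, use Lemma \ref{gauss} (equivalently Theorem \ref{miniexp}) to get $\LG(\gamma|_{\Delta_i})=|x_i|$, and then run the strict-convexity alignment argument of Theorem \ref{uniestricta}. Your pairwise treatment of consecutive pieces via $\beta(s)=e^{sx_i}e^{sx_{i+1}}$ is exactly what the paper means by ``arguing as in the proof of Theorem \ref{uniestricta}'', so there is no substantive difference.
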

\begin{proof}
We can safely assume that $\gamma(a)=1$, fix $t\in (a,b)$. Let $U,V=\exp(U)$ be as in  Definition \ref{lexp}. Pick a partition $\{a=t_0<t_1<\cdots <t_n=b\}$ of the interval $[a,b]$ such that $t=t_{i_0}$ for some $i_0$. Denote $\Delta_i=[t_i,t_{i+1}]$ and make sure that $\gamma|_{\Delta_i}\subset \gamma(t_i)V$ for all $i=0\dots {n-1}$. Then there exists $x_i\in U$ such that $\gamma(t_{i+1})=\gamma(t_i)e^{x_i}$ and by the local minimality of segments, $|x_i|\le  \LG(\gamma|_{\Delta_i})$. Therefore
$$
\sum_i |x_i|\le \sum_i \LG(\Gamma|_{\Delta_i})=\LG(\gamma)=\dg(1,u)\le \sum_i |x_i|.
$$
Considering $\beta(t)=e^{tx_1}\dots e^{tx_n}$ and arguing as in the proof of Theorem \ref{uniestricta}, we conclude that the $x_i$ are aligned and thus $\gamma(b)=e^z$ for some $z=\lambda x_1\in \lie(K)$. Since $|z|=\sum_i |x_i|$, the segment is also short.
\end{proof}

\begin{rem}[The no-small subgroups property]
It is well-known that $\mu$ if is a continuous norm or a Finsler norm (i.e. non-degenerate) on $\lie(G)$, and $G$ is locally exponential, then there exists a neighborhood $W$ of $0$ in $\lie(G)$ such that $\exp(W)$ contains no nontrivial subgroup. For lack of a reference, we include a proof: shrinking, we can assume that there exists $0\in U\subset \lie(G)$ such that $U$ is open and convex, and $\exp|_U:U\to \exp(U)$ is a diffeomorphism. Fix any $R>0$ and consider the open set $W=\frac{1}{2}U\cap B_R\subset U$ (since $U$ is convex). Assume $K\subset \exp(W)$ is a subgroup, let $k\in K$ then there exists $w\in W$ such that $k=e^w$. Since $K$ is a group, $e^{2w}=k^2\in H\subset \exp(W)$ therefore there exists $w'\in W$ such that $e^{2w}=e^{w'}$. Note that $2w\in 2W\subset U$ and the same holds for $w'$, therefore $2w=w'$ and this proves that $2w\in W\subset B_R$. Iterating, we obtain $2^n w\in B_R$ for all $n\ge 1$ or equivalently $2^n|w|<R$. This is only possible if $w=0$, therefore $k=1$ and $K=\{1\}$.

\end{rem}

$\S$ Assuming the existence of $z\in \lie(G)$ with $|z|<R$ and $e^z=1$, the loop $\delta(t)=e^{tz}$ cannot be minimizing thus a major obstruction for the local minimality theorem (even for locally exponential groups) is the existence of arbitrary (norm) small $z$ outside the ``flat'' neighborhood $U\subset\lie(K)$.

\subsection{Normal subgroups}

In this section we examine the geodesics of a group obtained by a quotient of a Lie group $K$ with a bi-invariant metric and a locally exponential Lie subgroup.  A closed subgroup $H\subset K$ is a Lie subgroup if  there exists a $0$-neighborhood $U\subset \lie(K)$ such that 
\begin{equation}\label{subgr}
\exp(U\cap \lie(H))=\exp(U)\cap H
\end{equation}
(hence using the exponential chart, the subgroup is an embedded submanifold). The following equivalence will be useful (see \cite[Section IV]{neeblie}), recall $\exp_{*v}(w)=e^v\kappa_v(w)$ (Remark \ref{difexp}):

\begin{lem} Let $H\subset K$ be a closed normal Lie subgroup of a locally exponential Lie group. Then, shrinking $U$ if necessary, the following are equivalent:
\begin{enumerate}
\item $K/H$ is a locally exponential Lie group.
\item $H$ is a locally exponential Lie subgroup and $\kappa_v(\lie(H))=\lie(H)$ for $v\in U$.
\end{enumerate}
\end{lem}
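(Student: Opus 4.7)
My plan rests on interpreting the identity $\exp_{*v}(w)=e^v\kappa_v(w)$: since $\exp|_U$ is a diffeomorphism, $\exp_{*v}$ is a topological linear isomorphism $\lie(K)\to T_{e^v}K$, so $\kappa_v$ is a topological linear automorphism of $\lie(K)$. The condition $\kappa_v(\lie(H))=\lie(H)$ is then exactly the statement that $\exp_{*v}$ sends the kernel of $q_{*1}:\lie(K)\to\lie(K)/\lie(H)$ to the kernel $e^v\lie(H)$ of $q_{*e^v}:T_{e^v}K\to T_{q(e^v)}(K/H)$; that is, $\exp_{*v}$ descends to a well-defined linear bijection $\lie(K)/\lie(H)\to T_{q(e^v)}(K/H)$. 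The plan is to convert this infinitesimal condition into the global statement that $q\circ\exp$ factors through the quotient and defines the quotient exponential map. The local exponentiality of $H$ itself, in both directions, is immediate from (\ref{subgr}) together with the fact that $\exp|_{U}$ is a diffeomorphism, since its restriction to the linear subspace $U\cap\lie(H)$ is then a diffeomorphism onto $\exp(U)\cap H$.

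For (2) $\Rightarrow$ (1): I would first show that $q\circ\exp:U\to K/H$ is constant on slices $(v+\lie(H))\cap U$. Given $v\in U$ and $h\in\lie(H)$ with $v+th\in U$ for $t\in[0,1]$, the velocity of $t\mapsto q(\exp(v+th))$ is $q_{*e^{v+th}}\bigl(e^{v+th}\kappa_{v+th}(h)\bigr)$, which vanishes because $\kappa_{v+th}(h)\in\lie(H)$ by hypothesis. Consequently $\widetilde{\exp}([v]):=q(\exp v)$ is well defined on $q_{*1}(U)$, its differential at $[0]$ is the identity on $\lie(K)/\lie(H)$, and an inverse-function argument in the locally exponential category (applied to the model neighbourhood obtained by choosing a local complement, or directly via the smooth manifold structure already available on $K/H$) shows $\widetilde{\exp}$ is a local diffeomorphism at $[0]$. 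This is exactly local exponentiality of $K/H$.

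For (1) $\Rightarrow$ (2): Local exponentiality of $K/H$ gives a map $\widetilde{\exp}$ fitting into $\widetilde{\exp}\circ q_{*1}=q\circ\exp$ on a neighbourhood of $0$. For $v\in U$ small and $h\in\lie(H)$ with $v+h\in U$ we obtain $q(\exp(v+h))=q(\exp v)$; differentiating in $h$ at $h=0$ yields $q_{*e^v}(e^v\kappa_v(h))=0$, so $\kappa_v(h)\in\lie(H)$, proving $\kappa_v(\lie(H))\subseteq\lie(H)$. Running the same argument with the local inverse $\widetilde{\exp}^{-1}$ (which gives a symmetric statement about $\exp_{*v}^{-1}$ sending $e^v\lie(H)$ into $\lie(H)$) yields the reverse inclusion and hence equality. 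The main obstacle is this last surjectivity step in the locally convex (non-Banach) setting: in finite dimensions one inclusion plus injectivity of $\kappa_v$ suffices by dimension count, but in the general locally convex case one must lean on the fact that $\widetilde{\exp}$ is already a \emph{diffeomorphism} onto its image (not merely a smooth bijection) to obtain the inverse map and thereby the opposite inclusion; without that regularity, $\kappa_v$ could a priori map $\lie(H)$ onto a proper closed subspace of itself.
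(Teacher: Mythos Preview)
The paper does not prove this lemma at all: it is stated with a reference to Neeb \cite[Section IV]{neeblie} and nothing more. So there is no ``paper's own proof'' to compare against; I can only assess your argument on its merits.

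Your (1)$\Rightarrow$(2) direction is essentially correct, and your caution about surjectivity of $\kappa_v|_{\lie(H)}$ in the non-Banach setting is well placed and correctly resolved by using that $\widetilde{\exp}$ is a diffeomorphism, not merely a bijection.

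There is, however, a genuine gap in your (2)$\Rightarrow$(1). You correctly show that $q\circ\exp$ is constant on the slices $(v+\lie(H))\cap U$ and hence that $\widetilde{\exp}$ is well defined with $\widetilde{\exp}_{*[0]}=\mathrm{id}$. But then you invoke ``an inverse-function argument'' or ``the smooth manifold structure already available on $K/H$'', and neither resource exists here. In the general locally convex category there is no inverse function theorem, a closed subspace $\lie(H)\subset\lie(K)$ need not have a closed complement, and most importantly $K/H$ carries no manifold structure a priori: constructing one is precisely the content of (1). The hypothesis that $H$ is a locally exponential subgroup does not by itself equip $K/H$ with a Lie group structure in this generality.

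The way around this (and this is what Neeb actually does) is not to prove that $\widetilde{\exp}$ is a diffeomorphism between two pre-existing manifolds, but to \emph{use $\widetilde{\exp}$ itself as the chart that defines the manifold structure on $K/H$}. The condition $\kappa_v(\lie(H))=\lie(H)$ says exactly that $\exp|_U$ carries fibres of $q_{*1}$ to fibres of $q$; together with the local exponentiality of $H$ (which gives $\exp(U)\cap e^vH=\exp\bigl((v+\lie(H))\cap U\bigr)$ after shrinking $U$), this makes $\widetilde{\exp}$ a bijection from a $0$-neighbourhood in $\lie(K)/\lie(H)$ onto a neighbourhood of the identity coset. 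One then transports the l.c.s.\ structure across and verifies smoothness of the group operations in that chart. With this correction your outline becomes a valid proof sketch.
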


Then in the situation of the previous lemma, if $\pi:K\to K_0=K/H$ is the quotient map and $K$ has a bi-invariant metric, we obtain the continuous quotient norm (Definition \ref{qfm}). 
Let $o=\pi(1)$ be the identity of $K_0$., then for $\pi_{*k}(v)\in T_{kH}K_0$ with $v_0=\pi_{*k}(v)$ we have
$$
|\pi_{*k}(v)|_{kH}=\inf\{|k^{-1}v-z|: z\in \lie(H)\}=|\pi_{*1}(k^{-1}v)|_o.
$$
By the naturality of the exponential maps, we have $\pi\circ \exp_K= \exp_{K_0}\circ\pi_{*1}$, and if $\exp_K|_{B_R}$ is a diffeomorphism onto its image $V_R\subset K$ for some $R>0$, then $\exp_{K_0}|_{\widetilde{B_r}}$ is a diffeomorphism onto its image $\widetilde{V_r}=\pi(V_r)\subset K_0$, where $\widetilde{B_r}=\pi_{*1}(B_r)$ is the open quotient ball, for some smaller $r\le R$ Therefore the minimality results of Section \ref{loca}  apply. In what follows we estimate this radius with a lower bound $r=R/2$.

\smallskip

If the groups are $BCH$, and $w\in\lie(H)$, then locally 
$$
e^z=e^ve^w=v+w+\nicefrac{1}{2}[v,w]+\dots=v+h
$$
with $h\in\lie(H)$, since the Lie algebra of a normal subgroup is a closed ideal. This is the content of the following lemma.
\begin{lem}
Let $U\subset \lie(K)$ be as in (\ref{subgr}), with $H\subset K$ a normal subgroup and $K/H$ locally exponential. If $e^z=e^ve^w$ with $v,w,z\in U$ and $w\in \lie(H)$, then $z-v\in \lie(H)$.
\end{lem}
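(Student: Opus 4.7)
The strategy is to push the identity $e^z=e^ve^w$ down to $K_0=K/H$ via the quotient homomorphism $\pi:K\to K_0$, and then exploit the local injectivity of the quotient exponential. First, since $H$ is a locally exponential Lie subgroup satisfying (\ref{subgr}) and $w\in U\cap\lie(H)$, we have $e^w\in H$, so $\pi(e^w)=o$ is the identity of $K_0$. Applying $\pi$ to both sides of $e^z=e^ve^w$ yields $\pi(e^z)=\pi(e^v)$.

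Next, by the naturality of the exponential map with respect to the smooth group homomorphism $\pi$ (whose differential at $1$ is exactly the quotient map $\pi_{*1}:\lie(K)\to\lie(K)/\lie(H)=\lie(K_0)$), we have the identity $\pi\circ\exp_K=\exp_{K_0}\circ\,\pi_{*1}$. Combining with the previous step,
$$
\exp_{K_0}(\pi_{*1}(z))=\exp_{K_0}(\pi_{*1}(v)).
$$

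Now we use the hypothesis that $K_0$ is locally exponential together with the previous lemma (the one that was stated right before the statement to be proved): shrinking the original $U\subset\lie(K)$ if necessary, $\exp_{K_0}$ is a $C^1$-diffeomorphism on the $0$-neighborhood $\pi_{*1}(U)\subset \lie(K_0)$ (the image is open because $\pi_{*1}$ is an open linear quotient map and $\exp_K|_U$ is a diffeomorphism with its image). In particular $\exp_{K_0}$ is injective on $\pi_{*1}(U)$, and since both $\pi_{*1}(z)$ and $\pi_{*1}(v)$ belong to this neighborhood, we conclude $\pi_{*1}(z)=\pi_{*1}(v)$. Thus $\pi_{*1}(z-v)=0$; since $\ker\pi_{*1}=\lie(H)$, we obtain $z-v\in\lie(H)$, as desired.

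The only delicate point is the simultaneous choice of $U$: we need $\exp_K|_U$ to be a diffeomorphism, $U$ to satisfy the subgroup chart condition (\ref{subgr}), and $\exp_{K_0}$ to be injective on $\pi_{*1}(U)$. All three can be secured at once by first invoking local exponentiality of $K_0$ to produce a $0$-neighborhood $\widetilde{U}\subset\lie(K_0)$ on which $\exp_{K_0}$ is a diffeomorphism, and then replacing $U$ with $U\cap\pi_{*1}^{-1}(\widetilde{U})$ (still a $0$-neighborhood by continuity of $\pi_{*1}$). Once this is arranged, the proof is the three-line chase outlined above.
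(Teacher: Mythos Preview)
Your proof is correct and takes a genuinely different route from the paper. The paper argues pointwise along the path $z_t=\exp^{-1}(e^ve^{tw})$: differentiating $e^{z_t}=e^ve^{tw}$ gives $\kappa_{z_t}(\dot z_t)=w$, and the characterization $\kappa_{z_t}(\lie(H))=\lie(H)$ from the preceding equivalence lemma then forces $\dot z_t\in\lie(H)$, so $z-v=\int_0^1\dot z_t\,dt\in\lie(H)$. Your argument instead bypasses $\kappa$ entirely: you push the identity through the quotient homomorphism and use only the naturality $\pi\circ\exp_K=\exp_{K_0}\circ\,\pi_{*1}$ together with the local injectivity of $\exp_{K_0}$ (which is the \emph{hypothesis} that $K_0$ is locally exponential, so you do not actually need the nontrivial direction of the preceding lemma at all). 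Your approach is shorter and more conceptual; the paper's approach, by contrast, yields the explicit integral formula $z-v=\int_0^1\kappa_{z_t}^{-1}(w)\,dt$ and makes visible exactly how the condition $\kappa_v(\lie(H))=\lie(H)$ enters. Both proofs tacitly require $U$ small enough (yours so that $\exp_{K_0}$ is injective on $\pi_{*1}(U)$; the paper's so that the whole path $e^ve^{tw}$ stays in $\exp(U)$), and you were right to flag this; in the paper's setting the phrase ``shrinking $U$ if necessary'' in the previous lemma licenses exactly this adjustment.
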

\begin{proof}
Let $z_t=\exp^{-1}(e^ve^{tw})\subset \lie(K)$ which is a smooth path with $z_0=v$, $z_1=z$. Then differentiating $e^{z_t}=e^{v}e^{tw}$ we obtain $e^{z_t}\kappa_{z_t}(\dot{z_t})=e^{z_t}w$ (Remark \ref{difexp}). Hence $\kappa_{z_t}(\dot{z_t})=w$ for all $t\in [0,1]$. Since we are assuming that the exponential is invertible for small $z$, and by the previous lemma, we have $\dot{z_t}=\kappa_{z_t}^{-1}(w)\in \lie(H)$ for all $t\in [0,1]$. Hence
$$
z=z_1=z_0+\int_0^1 \dot{z_t}\, dt=v+\int_0^1 \kappa_{z_t}^{-1}(w)dt=v+h\in v+\lie(H).\qedhere
$$
\end{proof}

\begin{teo}
Let $K$ be a Lie group with a bi-invariant metric, $R>0$ as in Definition \ref{localymuexp}. Let $H\subset K$ be a normal Lie subgroup such that $K_0=K/H$ is locally exponential, assume $B_R\subset U$ where $U\subset\lie(K)$ is as in (\ref{subgr}). Let $v\in \lie(K)$ with $|v|<R/2$. Then
$$
d_{K_0}(o,e^v\cdot o)=d_K(H,e^v)=\inf\{|v-z|: z\in \lie(H)\}=|\pi_{*1}(v)|_0,
$$
and the segment 
$$
t\mapsto \gamma(t)=\exp_{K_0}(t\pi_{*1}v)=e^{tv}\cdot o=\pi(e^{tv})
$$
is a short path in $K_0$.
\end{teo}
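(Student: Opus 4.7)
The plan is to sandwich $d_{K_0}(o,e^v\cdot o)$ between two copies of $\rho:=|\pi_{*1}(v)|_o=\inf\{|v-z|:z\in\lie(H)\}$. The first equality $d_{K_0}(o,e^v\cdot o)=\dg(H,e^v)$ in the statement is immediate from Theorem~\ref{elcorodelq} applied to $M=K_0=K/H$, combined with right-$H$-invariance of $\dg$ (a consequence of bi-invariance of $\mu$). It then remains to show $\dg(1,e^vH)=\rho$, and that the segment $\gamma(t)=\pi(e^{tv})$ has length $\rho$.

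For the upper bound I would, for each $z\in\lie(H)$, consider the path $\alpha(t)=e^{tv}e^{-tz}$, which joins $1$ to $e^ve^{-z}\in e^vH$ (using that $e^{-z}\in H$ by regularity of $H$). A standard product-rule computation gives $\alpha_t^{-1}\dot\alpha_t=\Ad_{e^{tz}}(v)-z=\Ad_{e^{tz}}(v-z)$ (the second equality uses $\Ad_{e^{tz}}z=z$), so $\Ad_K$-invariance of $\mu$ yields $|\alpha_t^{-1}\dot\alpha_t|=|v-z|$ and $\LG(\alpha)=|v-z|$; taking infimum over $z$ gives $\dg(1,e^vH)\le\rho$. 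The very same bi-invariance argument computes the quotient norm along $\gamma(t)=e^{tv}\cdot o$: by Lemma~\ref{propac} and normality of $H$, $K_{\gamma(t)}=e^{tv}He^{-tv}=H$, so the quotient norm equals $\inf_{z\in\lie(H)}|v-z|=\rho$ at every $t$, hence $L_{K_0}(\gamma)=\rho$. Once the lower bound below is proved, this simultaneously yields the last equality and makes $\gamma$ short.

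For the lower bound, which is the main obstacle, I would take any $\beta:[0,1]\to K$ joining $1$ to some $e^vg$ with $g\in H$ and $\LG(\beta)<\rho+\varepsilon<R/2$ (possible because $\rho\le|v|<R/2$). The corollary after Theorem~\ref{mini} characterizing $V_R$ as the open $\dg$-ball of radius $R$ then forces $e^vg=e^y$ for a unique $y\in B_R$ with $|y|\le\LG(\beta)<R/2$. The triangle inequality gives $\dg(1,g)\le|v|+|y|<R$, so $g\in V_R$, providing $g=e^w$ with $w\in B_R\subset U$; combined with $g\in H$, hypothesis~(\ref{subgr}) together with injectivity of $\exp|_U$ forces $w\in\lie(H)$. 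Applying the lemma preceding this theorem to $e^y=e^ve^w$ (all three of $v,w,y$ lie in $U$, and $w\in\lie(H)$) we conclude $y-v\in\lie(H)$, hence $|y|\ge\rho$; therefore $\LG(\beta)\ge\rho$, and letting $\varepsilon\to 0^+$ closes the sandwich. The delicate point, which is exactly what the quantitative hypothesis $|v|<R/2$ secures, is the need to place both the endpoint $e^vg$ and the coset correction $g$ inside the single exponential chart $V_R$ so that the rigidity lemma can be invoked.
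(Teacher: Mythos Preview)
Your proof is correct and follows essentially the same approach as the paper: both establish the upper bound by computing $L_{K_0}(\gamma)=\rho$ and the lower bound by trapping the $H$-coset correction inside the exponential chart $V_R$ so that the preceding lemma can be applied to write $e^ve^w=e^y$ with $y-v\in\lie(H)$. The paper runs the lower bound via a minimizing sequence $h_n\in H$ rather than arbitrary paths $\beta$, but this is cosmetic; your explicit path $\alpha(t)=e^{tv}e^{-tz}$ for the upper bound is a nice extra (though redundant once $L_{K_0}(\gamma)=\rho$ is known), and your final ``letting $\varepsilon\to 0^+$'' is superfluous since you have already shown $\LG(\beta)\ge\rho$ with no $\varepsilon$-loss.
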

\begin{proof}
Note that for the quotient metric, $\gamma$ has constant speed
$$
\mu(\dot{\gamma}_t)_{\gamma_t}=|\pi_{*e^{tv}}v|_{e^{tv}H}=|\pi_{*1}(v)|_o=\inf\{|v-z|: z\in \lie(H)\}\le |v|,
$$
with equality if and only if $v$ is a minimal vector in $\lie(K)$. Let $h_n\in H$ be a minimizing sequence, $\dg(h_n,e^v)\to \dg(H,e^v)\le \dg(1,e^v)=|v|<R/2$. Let $n_0$ be such that $\dg(1,e^vh_n^{-1})=\dg(h_n,e^v)<R/2$ for all $n\ge n_0$. Then  $e^v h_n^{-1}=e^{z_n}$ for some $z_n\in \lie(K)$ with $|z_n|<R/2$. Since 
$$
\dg(1,h_n^{-1})\le \dg(1,e^v)+\dg(e^v,h_n^{-1})=|v|+\dg(e^v,e^ve^{z_n})=|v|+|z_n|<R/2+R/2=R,
$$
it must be $h_n^{-1}=e^{w_n}$ for some $w_n\in \lie(K)$ with $|w_n|<R$, with $w_n\in \lie(H)$ by our assumptions. Thus $e^{z_n}=e^ve^{w_n}$ for each $n\ge n_0$ and by the previous lemma, $z_n=v+x_n$ for some $x_n\in \lie(H)$, and
$$
|\pi_{*1}v|_o\le |v+x_n|=|z_n|=\dg(1,e^{z_n})=\dg(h_n,e^v)\to \dg(H,e^v).
$$
Hence
$$
|\pi_{*1}v|_o\le \dg(H,e^v)=d_{K_0}(o,e^v\cdot o)\le L_{K_0}(\gamma)=|\pi_{*1}(v)|_0.\qedhere
$$ 
\end{proof}

\section{Applications and examples}\label{examples}

In this section we present examples of the theory developed in Section \ref{hs} and Section \ref{biinvme}. Some of these examples are new and others are well-known; we have included them hoping that this new viewpoint will be useful to obtain a better comprehension. To help a bit the exposition, we have chosen to use $\di(\cdot,\cdot)$ for distances in the group $G$ and $d(\cdot,\cdot)$ for distances in the quotient space $M=G/K$, with suffixes indicating the Finsler norm used when necessary.

\subsection{Isometries of a Banach space}\label{isos}

Let $X$ be a Banach space and consider the uniform norm in $\mathcal L(X)$ denoted by $\|\cdot\|$, and denote $\di_{\infty}$ the induced left-invariant metric in $GL(X)$, the group of invertible linear operators on $X$. Let
$$
K=\mathcal U(X)=\{U\in GL(X): \|Ux\|=\|x\| \quad \forall \,x\in X\},
$$
the group of isometries of $X$. It is a Banach-Lie group with Banach-Lie algebra
$$
\lie(K)=\{V\in \mathcal L(X): \exp(tV)\subset K\quad \forall\, t\in\mathbb R\},
$$
the skew-Hermitian operators of $\mathcal L(X)$. Clearly the uniform norm is bi-invariant for $K$. It is well-known that, when restricted to the open ball $B_{\pi}=\{z:\|z\|<\pi\}$, the exponential map $\exp:\lie(K)\to K$ is a diffeomorphism onto its image $V_{\pi}\subset K$. 

\smallskip

Therefore for every $u\in V_{\pi}$ there exists $z\in \lie(K)$ with $\|z\|<\pi$ such that $u=e^z$, and by Theorem \ref{mini} the path $\delta(t)=e^{tz}$ is a short path for the bi-invariant metric $d_{\infty}$ induced by the the uniform norm, and $\di_{\infty}(1,u)=\|\log(u)\|=\|z\|$.  

\begin{rem}
Since $K$ is not necessarily a Lie subgroup of $GL(X)$, the set $V_{\pi}$ is not necessarily open for the \textit{norm} topology. For \textit{algebraic} Banach-Lie subgroups $K$ of $GL(X)\cap \mathcal U(X)$ however, one can estimate better the minimality radius in terms of the uniform norm: if $K$ is algebraic of degree $n$ then $K$ is a submanifold of $GL(X)$ and $\exp$ is a diffeomorphism onto $\{g\in K: \|g-1\|<\sin(\pi/n)$ (see Harris and Kaup \cite{harris}).
\end{rem}

\subsubsection{Unitary group of a Hilbert space}

For $X=H$ a Hilbert space, the situation is much nicer though; recall that  
$$
\lie(\mathcal U(H))=\mathcal L(H)_{ah}=\{x\in \mathcal L(H): x^*=-x\},
$$
the anti-Hermitian operators of $\mathcal L(H)$. It is well-known that $\exp$ is a diffeomorphism between the sets $\{ z\in \lie(\mathcal U(H)): \|z\|<\pi\}$ and $\{u\in \mathcal U(H): \|u-1\|<2\}$, which happens to be norm dense un $\mathcal U(H)$. Besides, each $u\in \mathcal U$ admits a Borelian logarithm $z\in \lie(\mathcal U(H))$ such that $\|z\|\le \pi$ (unique and with norm $\|z\|<\pi$ if $\|u-1\|<2$). Therefore Theorem \ref{mini} implies that $\di_{\infty}(1,e^z)=\|z\|$ and the geodesical radius of $\mathcal U(H)$ is $\pi$. 

\medskip

This a known result, obtained independently by Atkin \cite{atkin1,atkin2}, later by Porta and Recht \cite{portarecht}, both using a different technique than ours, involving representations of the one-parameter groups (the short paths) as geodesics in the sphere of the Hilbert space $H$.

\subsubsection{Matrix groups} Let $K=U_n$ stand for the orthogonal or the unitary group of $n\times n$ matrices. Let $|\cdot|$ stand for any weakly unitarily invariant norm (that is $|uzu^*|=|z|$ for any matrix $z$ and any $u\in K$, see Bhatia's book \cite[Chapter IV]{bhatia}). Then the discussion of the previous paragraph on the unitary group of a Hilbert space applies, with more freedom to choose the norm -for example, the $p$-norms given by the trace, 
$$
|z|_p=\tr((x^*x)^{\nicefrac{p}{2}}))^{\nicefrac{1}{p}}
$$
for $1\le p<\infty$, or the Ky-Fan norms, etc. For all these norms Theorem \ref{mini} implies that one-parameter groups are short (minimizing) paths for the Finsler metric. Moreover, when the norm is strictly convex -for instance, the $p$-norms for $p\ne 1$- those are the only minimizing paths (Theorem \ref{uniestricta}), and equality in the EMD property $\di(e^v,e^w)\le |v-w|$ only occurs if $v,w$ commute (Theorem \ref{emd}).

\medskip

For unitarily invariant norms, these results were obtained by Antezana et. al in \cite{alv} using a different technique, involving a fundamental result of Thompson regarding the eigenvalues of a sum of Hermitian matrices. However, the minimality results seem to be new for weakly unitarily invariant norms, such as the $c$-numerical radius
\begin{equation}\label{cnum}
\omega(v)_c=\max\{|\tr(cuvu^*)|:u\in U_n\}
\end{equation}
for a fixed matrix $c$ such that $\tr(c)\ne 0$ and $c$ is not a scalar multiple of the identity (see \cite[Section IV.4]{bhatia}). The usual numerical radius of $v$ can be recovered by picking the one-dimensional projection $c=e_1\otimes e_1$.

\subsection{Sphere of a Banach space}

Let $X$ be a smooth \textit{transitive} Banach space (the group of isometries $\mathcal U(X)$ acts transitively on the unit sphere $S_X=\{x\in X:\|x\|=1\}$, see \cite{guerre}). Pick any $p\in S_X$ and consider the natural transitive, smooth action ${\mathcal U}(X)\times S_X\to S_X$ given by $\pi(u,x)=u(x)$. The isotropy group is then $K=\{u\in \mathcal U(X): u(p)=p\}$ which, being algebraic, is a Banach-Lie subgroup of $\mathcal U(X)$. Moreover, $u\mapsto u(p)$ is a smooth submersion, therefore the requirements on $M=S_X$ in Definition \ref{smoothm} are fulfilled; since $\di_{\infty}$ is a complete metric in $\mathcal U(X)$ then the distance
$$
d_{S_X}(p,u(p))=\di_{\infty}(K_p,u)
$$
is a complete metric in $S_X$. The discussion in Section \ref{isos} shows that $\Gamma(t)=e^{tz}$ is a short geodesic of $\mathcal U(X)$ for sufficiently small $z\in \lie(\mathcal U(H))=i\textrm{Herm}(X)$. Therefore by Theorem \ref{arribajo2}, if one can find $z$ such that $\Gamma$ gives the distance from $1$ to $uK_p$, then $\gamma(t)=e^{tz}(p)$ would be a short geodesic in $S_X$ for the quotient distance $d_{S_X}$ (Remark \ref{geococ}). Note that by Remark \ref{necemin}, $z$ must be a minimal vector, that is
$$
\|z\|=\inf \{\|z-v\| : p\in \ker(v)\}.
$$

\smallskip
 
If $X=H$ is a Hilbert space, these metrics have been studied for the action of the $p$-Schatten groups (see \cite{aa} and the next section). In particular, it was shown in \cite[Remark 4.8]{aa} that when the Hilbert space is real, the action of the Hilbert-Schmidt unitary group induces in the unit sphere $S_H$ the usual Riemannian metric (the ambient metric of the inclusion $S_H\subset H$).

\paragraph{Sphere of a $C^*$-Hilbert module} Quotient metrics for the sphere $S_{\mathcal X}$ of Hilbert $C^*$-module over a $C^*$-algebra  $\mathcal A$ were considered by Andruchow and Varela in \cite{avhm}, in connection with the ideas of the quotient metric for the quotients of unitary groups of $C^*$ algebras $U_{\mathcal A}/U_{\mathcal B}$ developed by Recht et al. in \cite{dmlr,dmlr2}.

\subsection{Classical (restricted) groups of Hilbert space operators}\label{dlh}

Let $H$ be a complex Hilbert space, and let $\bh$ stand for the bounded linear operators acting on $\mathcal H$. We will denote with $\kh\subset \bh$ the ideal of compact operators. For an operator $x\in \bh$, we denote with $\sigma(x)\subset\mathbb C$ the spectra of $x$, with $\|x\|\in\mathbb R_+$ the usual operator norm and with $x^*\in\bh$ the Hilbert space adjoint of $x$. For $x\in \bh$ we will denote $|x|=\sqrt{x^*x}\in \bh^+$ the modulus of operators, and $\re(x)=\nicefrac{1}{2}(x+x^*)$, $\textrm{Im}(x)=\nicefrac{1}{2i}(x-x^*)$ their real and imaginary parts, which are self-adjoint operators. 

\medskip

Let $|\cdot|_{\mathcal I}:\bh\to \mathbb R_+\cup \{+\infty\}$ stand for a norm on $\bh$, consider
\begin{equation}\label{I}
\mathcal I=\{x\in \bh:|x|_{\mathcal I}<\infty\}.
\end{equation}
We assume that the norm is \textit{symmetric}, that is $|uzw|_{\mathcal I}\le \|u\|\, |z|_{\mathcal I}\|w\|$ for $v\in \mathcal I$, $u,w\in \bh$. Then $\mathcal I\subset \bh$ is a proper ideal of compact operators unless $|\cdot|_{\mathcal I}$ is equivalent to the uniform norm, in that case we can take $\mathcal I=\bh$ or $\mathcal I=\kh$. For convenience \textit{we will always assume that the space (\ref{I}) is complete with respect to the metric induced by the  $|\cdot|_{\mathcal I}$}, thus $\mathcal{I}$ is a Banach algebra.

\smallskip

In particular the norm is $\Ad$ invariant for the action of the unitary group $K=\mathcal U(H)$ (and any of its subgroups). Classical examples of non-equivalent symmetric norms are the $p$-Schatten norms $1\le p<\infty$, see de la Harpe \cite{harpe}. Of these norms, the $2$-norm is also known as the Hilbert-Schmidt norm, and it comes from the real inner product $\langle v,w\rangle_2=\re \tr(vw^*)$. In finite dimension, it is also known as the Frobenius norm and it can be also computed as $\|v\|_2^2= \sum_{i,j} |v_{ij}|^2$,  where $v_{ij}$ are the entries of the square matrix $v\in M_n$.

\medskip

Take $G=\gl(\mathcal H)$ and any symmetric norm $|\cdot|_{\mathcal I}$, consider the group
$$
GL_{\mathcal I}=\{g\in \gl(\h): g-1\in \mathcal I\}
$$
the usual exponential map $\exp(z)=\sum\limits_{n\ge 0}\frac{1}{n!}z^n$ and
$$
\lie(GL_{\mathcal I})=\{x\in \bh :\exp(tx)\subset GL_{\mathcal I} \; \forall t\in \mathbb R\}.
$$
it is easy to check that $\lie(GL_{\mathcal I})=\mathcal I$, and with the norm $|\cdot|_{\mathcal I}$ it becomes a normed Banach-Lie algebra. We provide $GL_{\mathcal I}$ with the manifold structure as an immersed submanifold of $\gl(\h)$, via its Lie algebra and the usual exponential map. We therefore obtain a Banach-Lie group with a topology that is possible non-equivalent to the uniform topology. 

\subsubsection{Restricted unitary operators}\label{restriu}

With the obvious modifications we consider the Banach-Lie subgroup 
$$
{\mathcal U}_{\mathcal I}=\{u\in \mathcal U(H): u-1\in{ \mathcal I}\}\subset GL_{\mathcal I}
$$
and its Banach-Lie sub-algebra $\lie(\mathcal {\mathcal U}_{\mathcal I})\subset \lie(GL_{\mathcal I})$ given by
$$
\lie(\mathcal {\mathcal U}_{\mathcal I})={\mathcal I}_{ah}=\{v\in \mathcal I: v^*=-v\},
$$
the anti-Hermitian operators of $\mathcal I$. For these groups of restricted unitary operators, we obtain the minimality of one-parameter groups given by Theorem \ref{mini}: to be more specific, if one considers $z$ in the ball of radius $R=\pi$ in $\mathcal I_{sh}$, $\|z\|\le |z|_{\mathcal I}<\pi$ therefore $\exp|_{B_R}$ is diffeomorphism onto its image.  Therefore $d_{\mathcal I}(1,u)=|z|_{\mathcal I}$ with the segment $t\mapsto e^{tz}$ as a short path, unique if the norm is strictly convex.

\medskip

This result however is not optimal, since using other techniques it can be shown that any $u\in \mathcal {\mathcal U}_{\mathcal I}$ can be reached by a minimizing one-parameter group, therefore the geodesic radius of $\mathcal {\mathcal U}_{\mathcal I}$ with $d_{\mathcal I}$ is \textit{infinite} (see \cite{alhr} where it was proved for the $p$-Schatten norms, $p\ge 2$, and \cite{alv} for a proof for all symmetric norms).

$\S$ It would be of interest to investigate further into weakly unitarily invariant norms such as the $c$-numerical radius, defined as in (\ref{cnum}) for $c$ a trace class operator, that is $c$ in the $1$-Schatten class of compact operators.

\subsection{The group of invertible operators in a Banach space}\label{gk}

Let $X$ be Banach space, $\mathcal L(X)$ the algebra of bounded operators and $GL(X)$ the group of invertible operators as before. Note that $GL(X)$ is open in $\mathcal L(X)$ therefore $TG\simeq GL(X)\times \mathcal L(X)$. 

\smallskip

Let $|\cdot|$ be a weakly symmetric norm in $\mathcal L(X)$, that is $|uvu^{-1}|=|v|$ for any $v\in \mathcal L(X)$ and any isometry $u\in \mathcal U(X)$. We assume that $|\cdot|$ is equivalent to the supremum norm $\|\cdot\|$, and that it is a Banach algebra norm ($|ab|\le |a|\,|b|$ for all $a,b\in \mathcal L(X)$). We further normalize so that $|1_{\mathcal L(X)}|=1$. The obvious example is $|\cdot|=\|\cdot\|$, the supremum norm.

\smallskip

There are two essential metrics that we can consider in the group $G=GL(X)$, which are right-invariant for the action of $\mathcal U(X)$. Let $g\in GL(X)$ and $v\in T_gGL(X)\simeq \mathcal L(X)$. Then we can consider
\begin{enumerate}
\item The left-invariant metric, $|v|_g=|g^{-1}v|$.
\item The flat metric, $|v|_g=|v|$.
\end{enumerate}

It is relevant to note that when restricted to $\mathcal{U}(H)$ \textit{both metrics are equal and give a bi-invariant metric} as constructed in Section \ref{isos}.

\medskip

The left-invariant metric is also $R$-uniform with $R(g)=|\Ad_g|\le \|g\|\|g^{-1}\|$, because 
$$
|(R_g)_{*h}v|_h=|vg|_{hg}=\|(hg)^{-1}vg\|=\|\Ad_g(h^{-1}v)\|\le \|g\|\, \|g^{-1}\|\, \|h^{-1}v\|=\|g\|\,\|g^{-1}\|\,|v|_h.
$$
Thus from Proposition \ref{contigru} we know that \textit{the group operations are continuous for the left-invariant metric in $GL$}. From Lemma \ref{explr}, we also know that the exponential map and its differential are locally Lipschitz, with a rough bound around $w\in \lie(G)=\mathcal L(X)$ given by the formula $C_w\le \nicefrac{1}{2} (e^{2\|w\|}-1)\|w\|^{-1}$.

\medskip

We claim that the flat metric is $L$ and $R$ uniform. To prove it, note that if $v\in\mathcal L(X)$ is regarded as a tangent vector at $h\in GL(X)$, then $(L_g)_{*h}:T_hGL(X)\to TgGL(X)$ is given by $v\mapsto gv$. Therefore 
$$
|(L_g)_{*h}|=\sup \{ |gv|_{gh}: |v|_h=1\}=\sup \{|gv|: |v|=1\}\le |g|
$$
because we are assuming it is a Banach algebra norm, therefore choosing $L(g)=|g|$ we have $|(L_g)_{*h}|\le L(g)$ for all $g,h\in G$ and $L$ is continuous, thus the metric is $L$-uniform. With the same argument, we can take $R(g)=|g|$ and the flat metric is $R$-uniform in $GL(X)$.

\begin{rem}[Left-invariant metric in $GL$]\label{ligl}
The geometry of the left-invariant metric in $GL(X)$ is not well-understood, but there are partial results: for instance if $X$ is a finite dimensional space and $|\cdot|$ is the Frobenius norm $|v|=\sqrt{\tr(v^*v)}$. We are then in the realm of classical Riemannian geometry and the unique geodesic (which is locally minimizing) starting at $g\in GL_n$, with initial speed $v$, is given by $t\mapsto ge^{tv^*}e^{t(v-v^*)}$. See \cite{alrv} for these and related results for the Hilbert-Schmidt norm and other $p$-Schatten norms of compact operators in $GL(H)$. See also Theorem \ref{arripos} below.

By the results detailed in Section \ref{isos}, it follows that at least locally, the group $\mathcal U(H)$ is geodesically convex in $GL(H)$ with the left-invariant metric, since the minimizing geodesics for $\mathcal U(H)$ are geodesic segments $t\mapsto ge^{tz}=ge^{tz^*}e^{t(z-z^*)}$ because $z\in \lie(\mathcal U(H))$, therefore $z^*=-z$.
\end{rem}

\begin{rem}[Flat metric in $GL$]\label{flat}
The geometry of the flat metric in $GL(X)$ is of course, much simpler. Since $GL(X)$ is open in $\mathcal L(X)$, for given $g\in GL(X),v\in T_gGL(X)=\mathcal L(X)$ there exists an interval $(t_-,t_+)$ around $0\in\mathbb R$ such that the flat segment $\delta(t)=g+tv$ is contained in $GL(X)$ for $t\in (t_+,t_+)$. It is easy to see that flat segments are short for the flat metric (and in fact, if the norm is strictly convex, they are the unique short paths among given endpoints). Note that in contrast with the left-invariant metric, the group of isometries is not geodesically convex as a submanifold of $GL(X)$, since rarely $e^{tz}=1+tz$ (unless $z^2=0$, which cannot happen when $X=H$ is a Hilbert space).
\end{rem}

Let $K$ be an immersed Banach-Lie group  $K\subset \mathcal U(X)$, and consider the quotient space $M=GL(X)/K$. Then $M$ can be given the quotient metric $d_M$ (with any of the two metrics) and the theory of Section \ref{hs} applies. 

\medskip

When $X=H$ is a Hilbert space and $K$ is the full group of isometries (the unitary group), much more can be said, since the quotient can be realized as the manifold $GL^+$ of positive invertible operators in $H$. In Section \ref{pio} we will give details on the quotient left-invariant metric in $GL^+$ and in Section \ref{bures} we delve into details of the the quotient flat metric in $GL^+$.

\subsection{Positive invertible Hilbert space operators}

We use the notation of Section \ref{dlh}, with $\mathcal I$ an ideal of bounded operators in $\mathcal L(H)$. Consider the action $\pi:G\times M\to M$ of $G=GL_{\mathcal I}$ on $M=GL^+_{\mathcal I}$, where
$$
GL^+_{\mathcal I}=\{a\in GL_{\mathcal I}: a^*=a, \sigma(a)\subset (0,+\infty)\}
$$
and $\pi(g,a)=gag^*$. Being open in $\mathcal I_{sa}$ (the self-adjoint operators of $\mathcal I$), there is a natural identification $TGL_{\mathcal I}^+\simeq \mathcal I_{sa}$. 

\medskip

Let $x=1\in M$, therefore $K={\mathcal U}_{\mathcal I}$.  Note that each $a\in GL^+_{\mathcal I}$ has a unique logarithm $\ln(a)\in\mathcal I_{sa}$. Therefore $GL^+_{\mathcal I}=\{gg^*:g\in \mathcal I\}=\pi_1(GL_{\mathcal I})$ and the action is transitive.   For given  $g\in GL_{\mathcal I}$ on $p=hh^*\in M=GL_{\mathcal I}^+$ the action is then given by
\begin{equation}\label{accion}
\ell_g(p)=ghh^*g^*=gpg^*,
\end{equation}
and we have $\pi_1(g)=gg^*$ therefore 
\begin{equation}\label{deriac}
(\pi_1)_{*g}\dot{g}=\dot{g}g^*+g \dot{g}^*.
\end{equation}

\subsubsection{Left-invariant metric in $GL^+$}\label{pio}

We give $GL_{\mathcal I}$ the left-invariant metric induced by the norm of the ideal. We are in the situation of the Section \ref{gk}, regarding the Banach algebra ${\mathcal I}$ acting on itself by left multiplication (therefore $X={\mathcal I}$ and $GL_{\mathcal I}=GL(X)$). In particular the action (\ref{accion}) is isometric in $GL_{\mathcal I}^+$ with the quotient metric $\mu$.

\medskip

We now characterize the quotient of the left-invariant metric for symmetric norms. In the Riemannian setting, these are well-know results (see also \cite{larro} for the extension for Hilbert-Schmidt operators). There is a precedent (in some sense) of this result for the uniform norm, in a paper by Corach and Maestripieri \cite[Theorem 2.6]{cm}, where it is introduced as a variational characterization of the horizontal lifts on the cone of positive invertible operators of $\bh$. 

\bigskip

We start with a lemma of a well-known result for matrices (see Fan and Hoffman \cite{fh}); we include a proof to illustrate the role of unitary invariance. 

\begin{lem}\label{order}
Assume that $|\cdot|_{\mathcal I}$ is a symmetric norm as in Section \ref{dlh}. Then for any $v\in \mathcal I$,
$$
\mathop{\inf}_{z\in {\mathcal I}_{ah}} |v-z|_{\mathcal I}=|\re(v)|_{\mathcal I}.
$$
\end{lem}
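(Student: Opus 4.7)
The plan is to establish the two inequalities separately, using only the decomposition $v = \re(v) + i\,\im(v)$ and the (weakly) unitary invariance built into the definition of a symmetric norm.

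For the upper bound, I choose an explicit anti-Hermitian element realizing $\re(v)$. Setting
$$
z_0 = \tfrac{1}{2}(v - v^*) = i\,\im(v),
$$
we have $z_0^* = -z_0$, so $z_0 \in \mathcal I_{ah}$, and $v - z_0 = \tfrac{1}{2}(v + v^*) = \re(v)$. Hence the infimum is at most $|\re(v)|_{\mathcal I}$.

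For the lower bound, take any $z \in \mathcal I_{ah}$ and set $w = v - z$. Since $z + z^* = 0$, we get $w + w^* = v + v^* = 2\re(v)$, so by the triangle inequality
$$
|\re(v)|_{\mathcal I} = \tfrac{1}{2}|w + w^*|_{\mathcal I} \le \tfrac{1}{2}\bigl(|w|_{\mathcal I} + |w^*|_{\mathcal I}\bigr).
$$
The key remaining point, where unitary invariance enters, is the identity $|w^*|_{\mathcal I} = |w|_{\mathcal I}$. This follows from the polar decomposition $w = u|w|$ with $u$ a partial isometry on $\h$: since $\|u\| \le 1$ one has $|w|_{\mathcal I} = |u|w|\,|_{\mathcal I} \le \| |w| \|_{\mathcal I}$ by symmetry of the norm, and since $|w| = u^*w$ the reverse inequality holds, giving $|w|_{\mathcal I} = \| |w| \|_{\mathcal I}$; the same argument applied to $w^* = |w|u^*$ yields $|w^*|_{\mathcal I} = \| |w| \|_{\mathcal I}$ as well. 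Combining, $|\re(v)|_{\mathcal I} \le |w|_{\mathcal I} = |v - z|_{\mathcal I}$, and taking infimum over $z$ closes the argument.

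The only genuinely non-formal step is the equality $|w^*|_{\mathcal I} = |w|_{\mathcal I}$; in finite dimensions one may multiply by a unitary that extends the partial isometry, but in the general Hilbert space / symmetrically normed ideal setting one must be slightly careful and invoke symmetry via two bounded multiplications (the partial isometry $u$ and its adjoint, both of norm at most one). Everything else is the elementary identity $w + w^* = 2\re(v)$ combined with the triangle inequality.
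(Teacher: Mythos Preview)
Your proof is correct and takes a genuinely different (and more elementary) route than the paper's. Both proofs handle the upper bound identically by choosing $z_0=i\,\im(v)$. For the lower bound, the paper invokes Thompson's operator inequality: for any $w$ there is a unitary $u$ with $|\re(w)|\le u\,|w|\,u^*$ in the L\"owner order, and then uses that symmetric norms are monotone for that order and depend only on singular values. You bypass this entirely by observing that subtracting an anti-Hermitian element leaves the Hermitian part unchanged, so $\re(v-z)=\re(v)$, and then the triangle inequality together with $|w^*|_{\mathcal I}=|w|_{\mathcal I}$ (obtained from the polar decomposition and two applications of the symmetry inequality $|axb|_{\mathcal I}\le\|a\|\,|x|_{\mathcal I}\,\|b\|$) finishes the job. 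Your argument is shorter and self-contained, needing no external operator inequality; the paper's route, while heavier, makes explicit that the result can also be read as a consequence of the spectral monotonicity of symmetric norms, which is a structural feature one might want to emphasize in that section.
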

\begin{proof}
Taking $z=i \im(v)\in {\mathcal I}_{ah}$ the inequality $|\re(v)|_{\mathcal I}\ge \mathop{\inf}_{z\in {\mathcal I}_{ah}} |v-z |_{\mathcal I}$ is immediate. On the other hand, for any $z\in {\mathcal I}_{ah}$ there exists a unitary $u\in \bh$ such that
$$
|\re(v)|=|\re(v-z)|\le u|v-z|u^*
$$
by Thompson's inequality \cite{thompson} for the partial order of symmetric operators. It is well-known that symmetric norms preserve the spectral order, that is $0\le v\le w\in \bh$ implies $|v|_{\mathcal I}\le |w|_{\mathcal I}$, and also that the norm only depends on the singular values, i.e. $|v|_{\mathcal I}=|v^*|_{\mathcal I}=|\,|v|\,|_{\mathcal I}$. Hence $|\re(v)|_{\mathcal I}\le |u|v-z|u^*|_{\mathcal I}=|v-z|_{\mathcal I}$ for any $z\in {\mathcal I}_{ah}$.
\end{proof}

\begin{teo}\label{teoq}
Let $p=g\cdot 1=gg^*\in M=GL^+_{\mathcal I}$ and  let $v\in T_{gg^*}M\simeq \mathcal I_{sa}$. If $|\cdot|_{\mathcal I}$ is symmetric and $\mu:TM\to\mathbb R$ is the quotient metric of the left-invariant metric in $GL_{\mathcal I}$, then
\begin{equation}\label{metripos}
\mu(v)_{g\cdot 1}=|\re(g^{-1}\dot{g})|_{\mathcal I}=\nicefrac{1}{2}|p^{- \nicefrac{1}{2}}vp^{- \nicefrac{1}{2}}|_{\mathcal I}.
\end{equation}
The shortest path among $p=gg^*,q=hh^*\in M$ is given by 
$$
\gamma_{p,q}(t)=p^{ \nicefrac{1}{2}}(p^{- \nicefrac{1}{2}}qp^{- \nicefrac{1}{2}})^tp^{ \nicefrac{1}{2}}
$$
with constant speed 
$$
\mu(\dot{\gamma}_t)_{\gamma_t}=\nicefrac{1}{2} |\ln(p^{- \nicefrac{1}{2}}q p^{- \nicefrac{1}{2}})|_{\mathcal I}=d_{\mathcal I}(q^{-\nicefrac{1}{2}}p^{\nicefrac{1}{2}},{\mathcal U}_{\mathcal I}).
$$
If the norm is strictly convex, $\gamma_{p,q}$ is the unique short path in $GL_{\mathcal I}^+$ joining the given endpoints.
\end{teo}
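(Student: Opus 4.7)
My approach is: derive the pointwise formula for $\mu$ from Lemma \ref{order}, exhibit an explicit isometric lift of $\gamma_{p,q}$ in $GL_{\mathcal I}$, and prove minimality through an Exponential Metric Decreasing estimate for the matrix logarithm; uniqueness under strict convexity will follow by inspecting the equality case.

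For the first equality in (\ref{metripos}), I would combine Definition \ref{qfm} with the left-invariance reduction of Proposition \ref{eses}.2 to write $\mu(v)_{g\cdot 1} = \inf_{z\in\mathcal I_{ah}} |g^{-1}\dot g - z|_{\mathcal I}$, then apply Lemma \ref{order} directly. Since the quotient norm is independent of the lift (Proposition \ref{eses}.1), I specialize to $g = p^{1/2}$ and use (\ref{deriac}) to identify $\re(p^{-1/2}\dot g)$ with $\tfrac{1}{2}p^{-1/2}v p^{-1/2}$, which yields the second equality. Setting $x = \ln(p^{-1/2}q p^{-1/2})$, the factorization $\gamma_{p,q}(t) = \ell_{p^{1/2}}(e^{tx})$ combined with the $G$-invariance of $\mu$ (Proposition \ref{eses}.3) reduces the length computation to the commutative calculation $\mu(xe^{tx})_{e^{tx}} = \tfrac{1}{2}|e^{-tx/2}xe^{tx}e^{-tx/2}|_{\mathcal I} = \tfrac{1}{2}|x|_{\mathcal I}$, giving $L_M(\gamma_{p,q}) = \tfrac{1}{2}|x|_{\mathcal I}$. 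The candidate lift $\Gamma(t) = p^{1/2}e^{tx/2}$ satisfies $\Gamma\Gamma^* = \gamma_{p,q}$ and $\Gamma^{-1}\dot\Gamma = x/2$ is self-adjoint, so $\LG(\Gamma) = \tfrac{1}{2}|x|_{\mathcal I} = L_M(\gamma_{p,q})$; that is, $\Gamma$ is an isometric lift.

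By Theorem \ref{arribajo2}, minimality of $\gamma_{p,q}$ in $M$ is equivalent to $\Gamma$ realizing the distance from $p^{1/2}$ to the fiber $q^{1/2}\mathcal U_{\mathcal I}$ in $(GL_{\mathcal I}, \di_{\mathcal I})$. Using left-invariance of $\di_{\mathcal I}$ and Theorem \ref{elcorodelq}, this reduces to the single inequality $d_M(1, e^x) \geq \tfrac{1}{2}|x|_{\mathcal I}$ for every $x \in \mathcal I_{sa}$, which I expect to be the main obstacle. The plan is to establish this via an EMD bound for the logarithm: for any smooth $\beta:[0,1]\to M$ joining $1$ and $e^x$, put $y(t) = \ln\beta(t)$ and use the Daletskii--Krein formula $\dot y(t) = \int_0^\infty (\beta+s)^{-1}\dot\beta(\beta+s)^{-1}\,ds$ to realize $\dot y$ as the image of $\beta^{-1/2}\dot\beta\beta^{-1/2}$ under the map
\begin{equation*}
\Phi_b(X) = \int_0^\infty b^{1/2}(b+s)^{-1} X (b+s)^{-1} b^{1/2}\,ds, \qquad b = \beta(t).
\end{equation*}
The identity $\Phi_b(1) = \int_0^\infty b(b+s)^{-2}\,ds = 1$ together with the manifest complete positivity of $\Phi_b$ exhibits it as a unital completely positive contraction, which on self-adjoint elements contracts every symmetric norm. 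Therefore $|\dot y(t)|_{\mathcal I} \leq |\beta(t)^{-1/2}\dot\beta(t)\beta(t)^{-1/2}|_{\mathcal I} = 2\mu(\dot\beta(t))_{\beta(t)}$, and integration yields $|x|_{\mathcal I} = |y(1)-y(0)|_{\mathcal I} \leq 2 L_M(\beta)$, which is the claim.

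For uniqueness under strict convexity: a short path $\tilde\gamma$ from $p$ to $q$ saturates the chain of inequalities just displayed, so $y(t) = \ln(p^{-1/2}\tilde\gamma(t)p^{-1/2})$ is a short path in the strictly convex space $(\mathcal I_{sa}, |\cdot|_{\mathcal I})$ from $0$ to $x$. Short paths in a strictly convex Banach space are reparametrizations of the affine segment, so $y(t) = f(t)x$ for a non-decreasing $f$, and $\tilde\gamma$ is a reparametrization of $\gamma_{p,q}$. The central difficulty is the operator-theoretic Step 3: the argument mirrors familiar non-positive-curvature reasoning from the Hilbert--Schmidt case, but passing to a generic symmetric norm forces one to replace the inner-product estimate by the unital-completely-positive contraction property of $\Phi_b$.
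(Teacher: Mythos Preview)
Your derivation of the tangent formula \eqref{metripos} is the same as the paper's: Lemma \ref{order} plus a computation at $g=p^{1/2}$. For the minimality of $\gamma_{p,q}$ and the uniqueness clause, the paper does not argue at all---it simply cites Mostow and \cite{cpr,cl} for the geodesic distance formula, and \cite[Proposition~3.6]{cl} for uniqueness. Your proposal is therefore not a different route so much as an explicit reconstruction of the argument the paper outsources, and in that sense it is more informative than the paper's own proof.

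There is one genuine gap in your Step~3. You claim that because $\Phi_b$ is unital and completely positive it contracts every symmetric norm on self-adjoint elements. That implication is false: the map $X\mapsto \langle Xe_1,e_1\rangle\,1$ on $M_2$ is unital and completely positive, yet sends $\operatorname{diag}(2,0)$ to $2\cdot 1$, inflating the trace norm from $2$ to $4$. What you need is that $\Phi_b$ is \emph{doubly substochastic}: both $\Phi_b(1)\le 1$ and $\Phi_b^{\,*}(1)\le 1$, where $\Phi_b^{\,*}$ is the trace-dual. This does hold here because the Kraus operators $A_s=b^{1/2}(b+s)^{-1}$ are self-adjoint, so $\Phi_b^{\,*}=\Phi_b$ and $\Phi_b^{\,*}(1)=\int_0^\infty b(b+s)^{-2}\,ds=1$. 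Once you add that observation, the contraction $|\Phi_b(X)|_{\mathcal I}\le |X|_{\mathcal I}$ follows from the standard majorization theorem for doubly stochastic maps (this is exactly the device used in \cite{cl}), and the rest of your argument goes through. Your uniqueness argument is fine: equality in the EMD chain forces $y=\ln(p^{-1/2}\tilde\gamma\,p^{-1/2})$ to be length-minimizing in the strictly convex space $(\mathcal I_{sa},|\cdot|_{\mathcal I})$, hence a reparametrized segment, which unwinds to $\tilde\gamma=\gamma_{p,q}\circ f$.
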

\begin{proof}
Note that  $g=p^{ \nicefrac{1}{2}}u$ for some $u\in {\mathcal U}_{\mathcal I}$. Take a smooth path $g_t\in GL$ such that $g_0=g$, then 
$$
v=\frac{d}{dt}\Big|_{t=0}\pi_1(g_t)=\frac{d}{dt}\Big|_{t=0} g_tg_t^*=\dot{g}g^*+g\dot{g}^*=\dot{g}u^*p^{ \nicefrac{1}{2}}+p^{ \nicefrac{1}{2}}u\dot{g}^*.
$$
Then
$$
p^{- \nicefrac{1}{2}}vp^{- \nicefrac{1}{2}}=p^{- \nicefrac{1}{2}}\dot{g}u^*+u\dot{g}^*p^{- \nicefrac{1}{2}}=\Ad_u\left(g^{-1}\dot{g} + (g^{-1}\dot{g})^*\right)=2\Ad_u\left(\re(g^{-1}\dot{g})\right).
$$
Therefore   
$$
|p^{- \nicefrac{1}{2}}vp^{- \nicefrac{1}{2}}|_{\mathcal I}=2 |\re(g^{-1}\dot{g})|_{\mathcal I}=2\mathop{\inf}_{z\in {\mathcal I}_{ah}} |g^{-1}\dot{g}-z|_{\mathcal I}
$$
by the previous lemma, and we obtain the first equality of the theorem. The proof of the formula for the geodesic distance can be adapted from the classical theory of geometry in the positive cone, see the seminal paper by Mostow \cite{mostow} for the Frobenius norm, and \cite{cpr,cl} for the extensions to the uniform norm and symmetric norms in operator algebras. The assertion on the speed of $\gamma$ is immediate from the definitions  and Theorem \ref{elcorodelq}. The assertion on the uniqueness can be proved using the same idea as in \cite[Proposition 3.6]{cl}.
\end{proof}

We now obtain corollaries of the previous theorem and the results in Section \ref{geodesics}, that gives us new information regarding short paths in the general linear group with the left-invariant metric (Remark  \ref{ligl}):

\begin{teo}\label{arripos}
Let $\delta:[0,1]\to GL(H)$ be the segment $\delta(t)=ge^{tv}$ with $g\in GL(H),  v=v^*\in \mathcal L(H)$ (or any of its restricted versions in $GL_{\mathcal I}$, for a symmetric norm $|\cdot|_{\mathcal I}$). Then 
\begin{enumerate}
\item $\delta$ is shorter than any other path joining its given endpoints, for the left-invariant metric in $GL$ induced by the symmetric norm $|\cdot|_{\mathcal I}$.
\item Moreover $|v|_{\mathcal I}=L_{\mathcal I}(\delta)=d_{\mathcal I}(g,ge^v)=d_{\mathcal I}(e^v, {\mathcal U}_{\mathcal I})$ for any $g\in GL$.
\end{enumerate}
\end{teo}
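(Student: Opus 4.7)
The plan is to realize $\delta$ as an isometric lift of a minimizing path in the homogeneous space $M=GL^+_{\mathcal I}$, and then invoke Theorem \ref{arribajo2} to transfer minimality from downstairs to upstairs. First I would compute directly that $\delta^{-1}\dot\delta=v$ in the left-invariant metric, so that
$$
L_{\mathcal I}(\delta)=\int_0^1 |v|_{\mathcal I}\,dt=|v|_{\mathcal I}.
$$
Next I project $\delta$ into $M$ via the action $\pi_1(g)=gg^*$, obtaining $\gamma(t)=\delta(t)\delta(t)^*=ge^{2tv}g^*$. Using the polar decomposition $g=p^{1/2}u$, with $p=gg^*$ and $u\in\mathcal U(H)$, a direct computation gives $p^{-1/2}q p^{-1/2}=e^{2uvu^*}$ for $q=\gamma(1)$, and therefore
$$
\gamma(t)=p^{1/2}(p^{-1/2}q p^{-1/2})^t p^{1/2}
$$
is precisely the geodesic $\gamma_{p,q}$ of Theorem \ref{teoq}. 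By that theorem $\gamma$ is short in $(M,d_M)$ with constant speed
$$
L_M(\gamma)=\tfrac12\bigl|\ln(p^{-1/2}q p^{-1/2})\bigr|_{\mathcal I}=|uvu^*|_{\mathcal I}=|v|_{\mathcal I},
$$
where in the last step I use that $|\cdot|_{\mathcal I}$ is symmetric (hence unitarily invariant).

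Now the identity $L_{\mathcal I}(\delta)=|v|_{\mathcal I}=L_M(\gamma)$ exhibits $\delta$ as an \emph{isometric lift} of the short path $\gamma$. Applying Theorem \ref{arribajo2} (implication $2\Rightarrow 1$) I conclude that
$$
L_{\mathcal I}(\delta)=d_{\mathcal I}(g,\,ge^v\mathcal U_{\mathcal I}),
$$
i.e., $\delta$ realizes the distance from $g$ to the entire fiber $ge^v\mathcal U_{\mathcal I}$.

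It remains to upgrade this to genuine pointwise minimality and to obtain the chain of equalities. Trivially $d_{\mathcal I}(g,ge^v\mathcal U_{\mathcal I})\le d_{\mathcal I}(g,ge^v)\le L_{\mathcal I}(\delta)$, and the previous display forces both inequalities to be equalities; this gives the first claim of the theorem together with $d_{\mathcal I}(g,ge^v)=|v|_{\mathcal I}$. For the last equality of the statement, I would combine left-invariance and reversibility (the norm is homogeneous) with right $\mathcal U_{\mathcal I}$-invariance of $d_{\mathcal I}$ to write
$$
d_{\mathcal I}(e^v,\mathcal U_{\mathcal I})=d_{\mathcal I}(1,e^v\mathcal U_{\mathcal I})=d_{\mathcal I}(g,ge^v\mathcal U_{\mathcal I})=L_{\mathcal I}(\delta).
$$
The one delicate point, on which the argument hinges, is the explicit identification of $\gamma(t)=ge^{2tv}g^*$ with the canonical geodesic $\gamma_{p,q}$ of Theorem \ref{teoq} via the polar factorization of $g$; once this is established, everything else follows from the packaging provided by Theorems \ref{teoq} and \ref{arribajo2}, together with the $\mathcal U$-invariance of $|\cdot|_{\mathcal I}$.
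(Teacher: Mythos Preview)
Your proof is correct and follows essentially the same approach as the paper: both arguments use the right polar decomposition $g=p^{1/2}u$ to identify the projected path $\gamma(t)=\delta_t\delta_t^*$ with the canonical geodesic $\gamma_{p,q}$ of Theorem \ref{teoq}, verify that $\delta$ is an isometric lift, and then invoke Theorem \ref{arribajo2}. Your write-up is in fact slightly more explicit than the paper's in unpacking the final chain of equalities (the paper simply says ``by Theorem \ref{arribajo2} and the left-invariance of the metric, our claims are proved'').
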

\begin{proof}
Let $h=\delta(1)=ge^v$, $p=gg^*$ therefore $p^{\nicefrac{1}{2}}=\sqrt{gg^*}=|g^*|$. Let $g=|g^*|u$ be the right-polar decomposition of $g$, with $u$ a unitary operator. Let $q=hh^*=ge^ve^{v*}g^*=ge^{2v}g^*$. A straightforward computation shows that $2uvu^*=\ln(p^{-\nicefrac{1}{2}} q p^{-\nicefrac{1}{2}})$, therefore 
$$\delta_t\cdot 1=\delta_t\delta_t^*=p^{\nicefrac{1}{2}} e^{2tuvu^*} p^{\nicefrac{1}{2}}=\gamma_{p,q}(t)$$
and $\delta$ is a lift of the minimizing path $\gamma=\gamma_{p,q}\in GL^+$. Moreover, 
$$
|\delta_t^{-1}\dot{\delta}|_{\mathcal I}=|v|_{\mathcal I}=\nicefrac{1}{2}|\ln(p^{-\nicefrac{1}{2}} q p^{-\nicefrac{1}{2}})|_{\mathcal I}=\mu(\dot{\gamma}_t)_{\gamma}
$$
therefore $\delta$ is an isometric lift of $\gamma$. By Theorem \ref{arribajo2} and the left-invariance of the metric, our claims are proved.
\end{proof}

This result is particularly useful for the trace norm (the $1$-Schatten norm) and for the uniform norm of $GL(H)$, since no geodesics were known for the left-invariant metric of those norms. Even for the $p$-Schatten norms this improves significantly the results of \cite[2.1.2]{alrv}; there we obtained that these segments were critical points of the length functional, but it was unclear whether they were locally minimizing (except for the case of $p=2$, where Riemannian techniques applied).

\medskip

Note that the second assertion of the previous theorem says in particular that if $v=v^*$ then $\di_{\mathcal I}(1,e^v)=\di_{\mathcal I}({\mathcal U}_{\mathcal I},e^v)=|v|_{\mathcal I}$ in $GL_{\mathcal I}$.

\begin{lem}\label{unigl}
If the norm is strictly convex then for self-adjoint $v$, the segment $\delta(t)=ge^{tv}$ is the unique short path in $GL_{\mathcal I}$ realizing the left-invariant distance from $g$ to $ge^v$.
\end{lem}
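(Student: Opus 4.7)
The plan is to reduce, using the left-invariance of $d_{\mathcal{I}}$, to showing that any short rectifiable path $\alpha:[0,1]\to GL_{\mathcal{I}}$ from $1$ to $e^v$ coincides, after reparametrization, with the segment $\delta(t)=e^{tv}$. First I would push $\alpha$ down to the positive cone: by Theorem \ref{arripos}.2, $d_{\mathcal{I}}(1,e^v)=|v|_{\mathcal{I}}=d_{\mathcal{I}}(e^v,\mathcal{U}_{\mathcal{I}})$, and using the $\mathcal{U}_{\mathcal{I}}$-right invariance of $d_{\mathcal{I}}$ together with the reversibility of the symmetric norm, one obtains $d_{\mathcal{I}}(1,e^v\mathcal{U}_{\mathcal{I}})=|v|_{\mathcal{I}}$ as well. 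Hence $L_{\mathcal{I}}(\alpha)=d_{\mathcal{I}}(1,e^v\mathcal{U}_{\mathcal{I}})$, and Theorem \ref{arribajo2} ($1\Rightarrow 2$) yields that $\widetilde{\alpha}(t):=\alpha(t)\alpha(t)^*$ is a short path in $GL^+_{\mathcal{I}}$ joining $1$ and $e^{2v}$. The uniqueness assertion of Theorem \ref{teoq} (which applies because the norm is strictly convex) forces, after a reparametrization of $\alpha$, that $\widetilde{\alpha}(t)=\gamma_{1,e^{2v}}(t)=e^{2tv}=e^{tv}(e^{tv})^*$.

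Next I would parametrize $\alpha$ along this minimal geodesic of $GL^+_{\mathcal{I}}$. Setting $u(t):=e^{-tv}\alpha(t)$, the identity $\widetilde{\alpha}(t)=e^{tv}(e^{tv})^*$ gives $u(t)u(t)^*=1$, and $u(t)-1=e^{-tv}(\alpha(t)-e^{tv})\in\mathcal{I}$ because $\alpha(t)-1$ and $e^{tv}-1$ lie in $\mathcal{I}$; thus $u:[0,1]\to\mathcal{U}_{\mathcal{I}}$ with $u(0)=u(1)=1$ and $\alpha=e^{tv}u$. A direct computation yields
\[
\alpha^{-1}\dot\alpha = u^*vu + u^*\dot u,
\]
where $a(t):=u^*vu$ is self-adjoint in $\mathcal{I}$ and $b(t):=u^*\dot u$ is anti-Hermitian in $\mathcal{I}$. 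Lemma \ref{order} together with the unitary invariance of $|\cdot|_{\mathcal{I}}$ gives $|a+b|_{\mathcal{I}}\ge |a|_{\mathcal{I}}=|v|_{\mathcal{I}}$; integrating and comparing with $L_{\mathcal{I}}(\alpha)=|v|_{\mathcal{I}}$ forces the equality $|a(t)+b(t)|_{\mathcal{I}}=|a(t)|_{\mathcal{I}}$ for almost every $t$.

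The main obstacle is the final step, where one must deduce $b\equiv 0$ from this equality using strict convexity. The trick is to exploit the adjoint symmetry $(a+b)^*=a-b$ combined with $|x^*|_{\mathcal{I}}=|x|_{\mathcal{I}}$ (which holds for symmetric norms), giving $|a+b|_{\mathcal{I}}=|a-b|_{\mathcal{I}}$; from
\[
2|a|_{\mathcal{I}}=|(a+b)+(a-b)|_{\mathcal{I}}\le |a+b|_{\mathcal{I}}+|a-b|_{\mathcal{I}}=2|a|_{\mathcal{I}}
\]
the triangle inequality is saturated. Setting aside the trivial case $v=0$, we have $a\neq 0$, and neither of $a\pm b$ can vanish (otherwise $a=\mp b$ would force $a=b=0$ by self-adjointness of $a$ and anti-Hermiticity of $b$). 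Strict convexity then gives $a-b=\lambda(a+b)$ for some $\lambda>0$, and comparing norms forces $\lambda=1$, whence $b=0$. Therefore $\dot u=0$ a.e., and continuity of $u$ with $u(0)=1$ forces $u\equiv 1$, so $\alpha(t)=e^{tv}$ as claimed.
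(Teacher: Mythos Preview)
Your argument is correct and follows essentially the same route as the paper: reduce to $g=1$, push the short path $\alpha$ down to $GL^+_{\mathcal I}$ via $\alpha\mapsto\alpha\alpha^*$, invoke the uniqueness clause of Theorem \ref{teoq} to obtain (after reparametrization) $\alpha=e^{tv}u$ with $u\subset\mathcal U_{\mathcal I}$, and then show $u^*\dot u=0$ via strict convexity.

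The only genuine difference is in how the last step is executed. The paper appeals to Remark \ref{necemin} to conclude directly that $\dot\Gamma$ is a minimal vector, so $|v+u^*\dot u|_{\mathcal I}=|v|_{\mathcal I}$; then both $0$ and $-u^*\dot u$ realise the distance from $v$ to $\mathcal I_{ah}$, and strict convexity gives uniqueness of the nearest point. You instead re-derive $|a+b|_{\mathcal I}=|a|_{\mathcal I}$ by integrating Lemma \ref{order} against $L_{\mathcal I}(\alpha)=|v|_{\mathcal I}$, and then use the adjoint reflection $|a+b|_{\mathcal I}=|(a+b)^*|_{\mathcal I}=|a-b|_{\mathcal I}$ together with saturation of the triangle inequality. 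Both are valid; the paper's version is slightly shorter because it reuses the abstract machinery of Section \ref{geodesics}, while yours is more self-contained and makes the role of the $*$-symmetry of the norm explicit.
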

\begin{proof}
By left-invariance we can assume that $g=1$. Let $\gamma=\delta\cdot 1$, it joins $1,e^{2v}$ in $GL^+$ and it is short with length $|2v|_{\mathcal I}$. If $\Gamma$ is another short path in $GL$ joining $1,e^v$ then $\beta=\Gamma\cdot 1$ joins $1,e^{2v}$ in $GL^+$. Using (\ref{desil}) and the previous theorem, we have
\begin{eqnarray}
L_{GL^+}(\beta) & = & L_{GL^+}(\Gamma \cdot 1)\le \Le_{GL}(\Gamma)=\di_{GL}(1,e^v) \nonumber \\
&= & \di_{GL}(\mathcal U_{\mathcal I},e^v)=d_{GL^+}(1,e^{2v})\le L_{GL^+}(\beta).\nonumber
\end{eqnarray}
Therefore $\beta$ is also short in $GL^+$ and Theorem \ref{teoq} tells us that $\gamma=\beta$. This implies that there exists a smooth path $u_t\in {\mathcal U}_{\mathcal I}$ such that $\Gamma_t=\delta_t u_t$ for $t\in [0,1]$. Therefore $\dot{\Gamma}_t=\dot{\delta}_t u_t+\delta_t \dot{u}_t$ and
$$
|\Gamma_t^{-1}\dot{\Gamma}_t|_{\mathcal I}=|u_t^*vu_t+u^*_t \dot{u}_t|_{\mathcal I}=|v+u^*_t\dot{u}_t|_{\mathcal I}\quad \forall t\in [0,1].
$$
But by Remark \ref{necemin} we also must have for each $t$
$$
|\Gamma_t^{-1}\dot{\Gamma}_t|_{\mathcal I}=\inf\{|v+u^*_t\dot{u}_t-z|_{\mathcal I}: z^*=-z\}=|v|_{\mathcal I}
$$
since $u^*_t\dot{u}_t\in \lie({\mathcal U}_{\mathcal I})$ and $v^*=v$ (see Lemma \ref{order}). Therefore for each $t$,
$$
|v+u^*_t\dot{u}_t|_{\mathcal I}=|v|_{\mathcal I}=d=|v-0|_{\mathcal I}.
$$
Where $d=\di(v, \lie(\mathcal U_{\mathcal I}))$. Since we are assuming the norm is strictly convex, there can only be one minimizer in $\lie({\mathcal U}_{\mathcal I})$ of the distance from $v$ to that subspace. Hence $u^*_t\dot{u}_t=0$, and $u_t$ is constant. Now $1=\Gamma_0=\delta_0 u_0=1 u_0$ implies that $u_t=1$ for all $t\in [0,1]$, thus $\Gamma=\delta$.
\end{proof}

\begin{rem}
It is worth noting that if we give $GL$ the left-invariant metric and $GL^+$ the induced quotient metric by $\pi_1(g)=gg^*$, in the inclusion $GL^+\hookrightarrow GL$ the metric of $GL^+$ is \textit{not} the restriction of the left-invariant metric because of (\ref{metripos}). For $p,q\in GL^+$, consider $v=\log(p^{-1}q)$ the unique bounded operator determined by the principal branch of the analytic logarithm in $\mathbb C\setminus \mathbb R_{\le 0}$. Note that $v=v^*$ if and only if $p,q$ commute. We can rewrite $pv=p\log(p^{-1}q)=p^{ \nicefrac{1}{2}}\ln(p^{- \nicefrac{1}{2}}qp^{- \nicefrac{1}{2}}) p^{ \nicefrac{1}{2}}$ and
$$
\delta(t)=pe^{tv}=pe^{t\log(p^{-1}q)}= p^{\nicefrac{1}{2}}(p^{- \nicefrac{1}{2}}qp^{- \nicefrac{1}{2}})^t p^{ \nicefrac{1}{2}}=\gamma_{p,q}(t).
$$
Therefore Theorem \ref{arripos} tells us that $p$ is a geodesic point of the inclusion $GL^+\hookrightarrow GL$ when $p$ commutes with all $q\in GL^+$, and this is the case only if $p=\lambda 1$ for some $\lambda\in\mathbb R_{>0}$.  Moreover if the norm is strictly convex, this condition is also necessary by the previous lemma. 
\end{rem}

\paragraph{Weakly symmetric norms.} In the finite dimensional setting, we can give $GL_{\mathcal I}^+$ the left-invariant metric using any weakly symmetric, since all norms are equivalent. But Lemma \ref{order} is false for such norms in general (it is false that they depend only on the modulus of a matrix, and it false that $0\le a\le b$ implies $|a|\le |b|$ for weakly unitarily invariant norms, see Bhatia and Holbrook \cite[p. 80]{bh}).  Therefore we obtain a new geometry in $GL_{\mathcal I}^+$ which for which little is known.

\smallskip

In particular if $c$ is a normal, trace class operator on an infinite dimensional Hilbert space $H$, it is known that for the $c$-numerical radius (\ref{cnum}) it holds
$$
\nicefrac{1}{2}\|c\|\,\|v\|\le \omega(v)_c\le (\tr|c|) \,\|v\|=\|c\|_1\|v\|
$$
for all $v\in \mathcal L(H)$ (see Goldberg and Straus \cite[Lemma 7]{gs}), therefore the $c$-numerical radius is equivalent to the uniform norm, and it is bi-invariant for the action of the unitary group of $H$. It is then possible to study the geometry of the left-invariant metric induced by the $c$-numerical radius in $GL$, and the quotient norm it induces in $GL^+$ using the techniques introduced here (no results are known to the author).

\subsubsection{Quotient of the flat metric in $GL^+$ (Bures' metric)}\label{bures}

We now turn to the quotient $GL^+=GL/U$ but we give $GL^+$ the quotient metric of the flat metric of $GL$ (Remark \ref{flat}), for any weakly unitarily invariant norm $|\cdot|_{\mathcal I}$. Now the flat metric is not left-invariant, therefore the action (\ref{accion}) is not isometric in $GL^+$. However, since the flat metric is trivial in $GL$, we have that, for $p=\pi_1(p^{\nicefrac{1}{2}}), q=\pi_1(q^{\nicefrac{1}{2}})\in GL^+$
\begin{equation}\label{diu}
d_{GL^+}(p,q)=\di_{\mathcal I}(p^{\nicefrac{1}{2}},q^{\nicefrac{1}{2}}\mathcal U_{\mathcal I})=\inf\{ |p^{\nicefrac{1}{2}}-q^{\nicefrac{1}{2}}u|_{\mathcal I}:u\in \mathcal{U}_{\mathcal{I}}\}.
\end{equation}
For sufficiently close $p,q\in GL^+$ the minimizing paths are the image of segments,
$$
\gamma(t)=\pi_1(p^{\nicefrac{1}{2}}+tv)=(p^{\nicefrac{1}{2}}+tv)(p^{\nicefrac{1}{2}}+tv^*)=p+t(p^{\nicefrac{1}{2}}v^*+vp^{\nicefrac{1}{2}})+t^2vv^*.
$$
To obtain a better description of the tangent metric in $GL^+$, recall that $\mathcal I_{ah}$ is the Lie algebra of the isotropy group $\mathcal U_{\mathcal I}$, and that $\mathcal I=\mathcal I_{sa} \,  \oplus  \mathcal I_{ah}$. The following will be useful:

\begin{lem}
For each $p\in GL^+$, and $w=x+y$ with $x^*=x,y^*=-y$ (a generic vector in $\mathcal I$), the linear map $T_p:w\mapsto xp+py$ is an isomorphism of $\mathcal I$.
\end{lem}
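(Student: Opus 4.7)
My plan is to prove bijectivity of the bounded linear map $T_p\colon \mathcal I\to \mathcal I$ directly; continuity is immediate from $T_p(w)=xp+py$ together with the continuous splitting $w\mapsto(\mathrm{Re}\,w,\tfrac{w-w^*}{2})$ and boundedness of multiplication by $p,p^{-1}$, so the open mapping theorem will upgrade bijectivity to a topological isomorphism. The key algebraic mechanism is that positivity and invertibility of $p$ make the Lyapunov operator $L_{p^2}(Y):=p^2 Y+Yp^2$ a bijection of $\mathcal L(H)$, and I will reduce both the kernel calculation and the construction of preimages to inverting $L_{p^2}$.

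For injectivity, take $w=x+y\in\ker T_p$, i.e.\ $xp+py=0$. Taking adjoints (using $p^*=p$, $x^*=x$, $y^*=-y$) gives the companion identity $px-yp=0$. Solving for $y$ in each equation and equating yields $p^2 x+xp^2=0$; since $p^2\ge \epsilon\, 1_{\mathcal L(H)}$ for some $\epsilon>0$, the standard short argument (iterate $p^2 x=-xp^2$ to obtain $p^4 x=xp^4$, conclude $[x,p^2]=0$ by continuous functional calculus, then read off $p^2 x=0$) forces $x=0$ and hence $y=0$.

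For surjectivity, given $v\in\mathcal I$, I would couple the target equation $xp+py=v$ with its adjoint $px-yp=v^*$, multiply the first by $p$ on the left, the second by $p$ on the right, and subtract to isolate a Lyapunov equation for $y$ alone:
\[
p^2 y+yp^2 \;=\; pv-v^*p.
\]
The right-hand side is anti-self-adjoint, and I would invert $L_{p^2}$ via the resolvent formula
\[
L_{p^2}^{-1}(C) \;=\; \int_0^{\infty} e^{-sp^2}\, C\, e^{-sp^2}\, ds
\]
to obtain $y\in\mathcal I_{ah}$. Having fixed $y$, I set $x:=(v-py)p^{-1}$ and verify $x^*=x$ by a short computation that feeds the Lyapunov equation for $y$ back into the conjugate of $x$; then $w=x+y$ is the desired preimage.

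The principal obstacle is keeping the Lyapunov solution inside the ideal $\mathcal I$ with its specific norm, rather than merely in $\mathcal L(H)$. The integral formula converges at exponential rate in the operator norm thanks to $\|e^{-sp^2}\|\le e^{-s\epsilon}$, but to transfer that convergence to $|\cdot|_{\mathcal I}$ I need the Banach-algebra (or weakly unitarily invariant plus equivalence-to-uniform) character of the ideal norm assumed in this part of the paper. Once this is secured, the symmetric form of the integrand preserves anti-self-adjointness, so the output $y$ lies automatically in $\mathcal I_{ah}$ and the recovered $x$ in $\mathcal I_{sa}$, completing the argument.
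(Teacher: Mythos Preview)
Your proof is correct and takes a route genuinely different from the paper's, though the two are closely related.

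For injectivity, the paper invokes the Putnam--Fuglede theorem: from $mp=pn$ with $m$ self-adjoint and $n$ skew-adjoint (hence both normal), one also gets $m^*p=pn^*$, i.e.\ $mp=-pn$, forcing $mp=pn=0$ immediately. Your argument instead derives the Lyapunov identity $p^2x+xp^2=0$ and then uses the commutation trick $p^4x=xp^4\Rightarrow [x,p^2]=0$ via the functional calculus; this is a bit longer but entirely self-contained and avoids the operator-theoretic black box. For surjectivity, the paper writes $p=e^s$ and quotes an external result that $C_s=2\cosh(\ad s)$ is an isomorphism of $\mathcal I$, then gives explicit formulas for $x$ and $y$ in terms of $\cosh^{-1}(\ad s)$. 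You instead isolate the Lyapunov equation $p^2y+yp^2=pv-v^*p$ and invert it with the heat-kernel integral $\int_0^\infty e^{-sp^2}(\cdot)e^{-sp^2}\,ds$; this is essentially the same operator (indeed $L_{p^2}=L_p\,C_s\,R_p$), but your version is explicit and makes the ideal-preservation issue visible: the symmetric-norm inequality $|e^{-sp^2}Ce^{-sp^2}|_{\mathcal I}\le \|e^{-sp^2}\|^2\,|C|_{\mathcal I}\le e^{-2s\epsilon}|C|_{\mathcal I}$ (or the equivalence to the uniform norm in the weakly invariant case) gives convergence in $\mathcal I$, and the symmetric form of the integrand preserves skewness. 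Your recovery of $x=(v-py)p^{-1}$ and the verification that $x^*=x$ reduces exactly to the Lyapunov equation for $y$, so the loop closes. In sum: the paper's proof is shorter but leans on two cited results, while yours is more elementary and constructive at the cost of checking the integral convergence by hand.
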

\begin{proof}
Clearly $T_p$ is linear; by the open mapping theorem $x+y\mapsto (x,y)$ with $|(x,y)|=|x|_{\mathcal I}+|y|_{\mathcal I}$ is an isomorphism. Note that, for symmetric norms, $|xp|_{\mathcal I}\le |x|_{\mathcal I}\|p\|$ for $x\in\mathcal I$ follows from the definition, and  $|xp|_{\mathcal I}\le C|x|_{\mathcal I}\|p\|$ for some constant $C>0$ if $|\cdot|_{\mathcal I}$ is weakly symmetric but equivalent to $\|\cdot\|$ in $\mathcal L(H)$. Therefore in any case $|xp+py|_{\mathcal I}\le (|x|_{\mathcal I}+|y|_{\mathcal I})C\|p\|$, thus $T_p$ is bounded. Let $xp+py=\hat{x}p+p\hat{y}$ then $mp=pn$ with $m=x-\hat{x}$ and $n=\hat{y}-y$. By the theorem of Putnam-Fuglede \cite[Theorem 12.16]{rudin} we also have $m^*p=pn^*$, thus $mp=-pn$. Therefore $mp=0=pn$ and this implies $x=\hat{x},y=\hat{y}$, proving that $T_p$ is a monomorphism. Now write $p=e^s$ with $s^*=s$ and consider 
$$
C_s:\xi\mapsto p\xi p^{-1}+p^{-1}\xi p=e^s\xi e^{-s}+e^{-s}\xi e^s=e^{\ad s}\xi+ e^{-\ad s}\xi=2\cosh(\ad s)\xi.
$$
it is well-known that for self-adjoint $s$, the linear operator $C_s$ is an isomorphism of $\mathcal I$ (see \cite[Theorem 12]{lineq}). If $z\in \mathcal I$, it is left to the reader to verify that if
$$
x=\nicefrac{1}{2}\cosh^{-1}(\ad s)\left[ zp^{-1}+p^{-1}z^*\right],\quad y=\nicefrac{1}{2}\cosh^{-1}(\ad s)\left[ zp^{-1}-p^{-1}z^*\right]
$$
then $x^*=x,y^*=-y$ and $T_p(x+y)=z$.
\end{proof}

Then we have
$$
\mathcal I=\mathcal I_{sa} \,p^{\nicefrac{1}{2}}\,  \oplus p^{\nicefrac{1}{2}}\, \mathcal I_{ah}
$$
Let $\dot{g}\in T_{p^{\nicefrac{1}{2}}}GL$, which is identified with $\mathcal I$. Then we can write $\dot{g}=x p^{\nicefrac{1}{2}}+ p^{\nicefrac{1}{2}}y$ 
where $x^*=x$ and $y^*=-y$. Then by (\ref{deriac}) a generic $v\in T_pGL^+$ can be written as
$$
v=(\pi_1)_{*g}\dot{g}=  xp+px , \quad x^*=x,
$$
therefore $x=(L_p+R_p)^{-1}v$. Since $p= p^{\nicefrac{1}{2}}\cdot 1=\pi_1( p^{\nicefrac{1}{2}})$, the tangent quotient norm can be computed as
\begin{eqnarray}
\mu(v)_p &= &\inf\{ |x p^{\nicefrac{1}{2}}+ p^{\nicefrac{1}{2}}y- p^{\nicefrac{1}{2}}z|_{\mathcal I}:z^*=-z\}=\inf\{ |x p^{\nicefrac{1}{2}}- p^{\nicefrac{1}{2}}z|_{\mathcal I}:z^*=-z\}\nonumber\\
&=&  \inf\{ | (L_p+R_p)^{-1}(v p^{\nicefrac{1}{2}})- p^{\nicefrac{1}{2}}z|_{\mathcal I}:z^*=-z\}.\nonumber
\end{eqnarray}

By the change of variables $z\mapsto w=p^{-\nicefrac{1}{2}} z p^{-\nicefrac{1}{2}}$ that preserves $\mathcal I_{ah}$, this can be rewritten as
\begin{equation}\label{bure}
\mu(v)_p =  \inf_{w\in \mathcal I_{ah}} \left| \left[(1+\Ad_p)^{-1}v- w\right]p^{-\nicefrac{1}{2}}\right|_{\mathcal I}.
\end{equation}

\begin{rem}In particular, for the Frobenius norm $|\cdot|_2$ that comes from trace inner product, it is easy to check that since $x^*=x$, then $xp^{\nicefrac{1}{2}}$ is orthogonal to the subspace $p^{\nicefrac{1}{2}} I_{ah}$. Therefore 
$$
\mu(v)_p^2 =|x p^{\nicefrac{1}{2}}|_2^2=\tr(xpx)=\nicefrac{1}{2}\, \tr(x(xp+px))=\nicefrac{1}{2}\, \tr(v \,(L_p+R_p)^{-1}v)
$$
which is the Bures metric, of particular relevance in quantum information theory (see Dittmann \cite{dittmann} and the references therein). That the Bures distance can be computed using the infimum in (\ref{diu}) was proved recently, see Bhatia, Jain and Lim \cite{bjl}, were they also show that the minimum is attained for $u_0$ the unitary operator in the polar decomposition of $q^{\nicefrac{1}{2}}p^{\nicefrac{1}{2}}$, that is $q^{\nicefrac{1}{2}}p^{\nicefrac{1}{2}}=u_0 |q^{\nicefrac{1}{2}}p^{\nicefrac{1}{2}}|$.
\end{rem}

\medskip

$\S$ It would be of interest to obtain better expressions for the quotient tangent metric (\ref{bure}) for other unitarily invariant norms.

\subsubsection{Positive operators in a $C^*$-algebra}

It is worth mentioning that the results of the previous sections, for the particular case of the uniform norm in the full group $GL(H)$, can be adapted verbatim to a $C^*$-algebra $\mathcal A$. 

\begin{prop}Let $GL(\mathcal A)$ be the full group of invertible operators of $\mathcal A$, $GL^+(\mathcal A)$ be the manifold of positive invertible elements and $\mathcal U(\mathcal A)$ the group of unitary elements. 
\begin{enumerate}
\item Left invariant metric in $GL(\mathcal A)$: let $\delta(t)=ge^{tv}$ with $g\in GL(\mathcal A)$,  $v\in \mathcal A_h$. Then $\delta$ is shorter than any other path joining its given endpoints, for the left-invariant metric in $GL(\mathcal A)$. Moreover $\|v\|=d_{\infty}(e^v, {\mathcal U}(\mathcal A))$ in $GL(\mathcal A)$.
\item Bures' metric in $GL^+(\mathcal A)$: for $p\in GL^+(\mathcal A)$ and $v^*=v$, the generalized Bures metric is given by
$$
\|v\|_p = \inf\{ \| (L_p+R_p)^{-1}(v p^{\nicefrac{1}{2}})- p^{\nicefrac{1}{2}}z\| :z^*=-z\}.
$$ 
For sufficiently close $p,q\in GL^+(\mathcal A)$, minimizing paths are $\gamma(t)=p+t(p^{\nicefrac{1}{2}}v^*+vp^{\nicefrac{1}{2}})+t^2vv^*$ and
$$
d_{GL^+}(p,q)=\inf\{ \| p^{\nicefrac{1}{2}}-q^{\nicefrac{1}{2}}u\|:u^*=u^{-1}\}.
$$ 
\end{enumerate}
\end{prop}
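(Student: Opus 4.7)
The approach is to show that the entire development of Section \ref{pio} and Section \ref{bures} carries over when $\mathcal L(H)$ is replaced by a unital $C^*$-algebra $\mathcal A$, since the only specific properties used about $\mathcal L(H)$ (beyond the $C^*$-structure) were the unitary invariance of the uniform norm, the polar decomposition, the spectral calculus, and the Putnam--Fuglede theorem, all of which hold in $\mathcal A$.

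For part (1), I would set up the smooth transitive action $\pi: GL(\mathcal A)\times GL^+(\mathcal A)\to GL^+(\mathcal A)$ by $\pi(g,p)=gpg^*$, with isotropy at $1$ equal to $\mathcal U(\mathcal A)$ and isotropy Lie algebra $\mathcal A_{ah}$. Since the uniform norm is bi-invariant under $\mathcal U(\mathcal A)$, the framework of Section \ref{hs} applies and the quotient tangent norm on $GL^+(\mathcal A)$ is well-defined. The $C^*$-algebra analog of Lemma \ref{order} is the elementary bound $\|\re(a)\|\le\|a\|$, valid in any $C^*$-algebra (and $\|\re(a)\|=\|\re(a-z)\|\le\|a-z\|$ for any skew-adjoint $z$), which yields
$$\inf_{z\in\mathcal A_{ah}}\|v-z\|=\|\re(v)\|\qquad\forall v\in\mathcal A.$$
Following the computation of Theorem \ref{teoq} verbatim (only the norm used changes) gives the quotient tangent norm formula $\mu(v)_p=\tfrac{1}{2}\|p^{-1/2}vp^{-1/2}\|$. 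The existence, uniqueness and constant-speed formula for $\gamma_{p,q}$ is the classical Corach--Porta--Recht theorem \cite{cpr} for the uniform norm in a $C^*$-algebra, which I would cite rather than re-prove.

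Having established that $\gamma_{p,q}$ is short in $GL^+(\mathcal A)$, the argument of Theorem \ref{arripos} applies without change: for $g\in GL(\mathcal A)$ and $v=v^*\in\mathcal A$, set $p=gg^*$, $q=ge^{2v}g^*$; write $g=p^{1/2}u$ with $u\in\mathcal U(\mathcal A)$ (polar decomposition in $\mathcal A$). A direct computation gives $2uvu^*=\ln(p^{-1/2}qp^{-1/2})$, so that
$$\delta(t)\cdot 1=\delta(t)\delta(t)^*=p^{1/2}e^{2tuvu^*}p^{1/2}=\gamma_{p,q}(t),$$
and $\|\delta_t^{-1}\dot\delta_t\|=\|v\|=\tfrac{1}{2}\|\ln(p^{-1/2}qp^{-1/2})\|=\mu(\dot\gamma_t)_{\gamma_t}$, so $\delta$ is an isometric lift of the short path $\gamma_{p,q}$. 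Theorem \ref{arribajo2} then gives that $\delta$ is short in $GL(\mathcal A)$ with length $\|v\|$, and in particular $\di_\infty(e^v,\mathcal U(\mathcal A))=\|v\|$ for $v$ self-adjoint.

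For part (2), I would adapt Section \ref{bures} directly. The only nontrivial point to check is that the linear map $T_p: x+y\mapsto xp+py$ (with $x^*=x$, $y^*=-y$) is still an isomorphism of $\mathcal A$ when $p\in GL^+(\mathcal A)$: injectivity uses Putnam--Fuglede, which holds in arbitrary $C^*$-algebras (any normal element $p$ satisfies $mp=pn\Rightarrow m^*p=pn^*$), and surjectivity follows from the invertibility of $\cosh(\ad s)$ on $\mathcal A$ for $p=e^s$, $s^*=s$ (its inverse is realized by the convergent entire functional calculus $\cosh^{-1}$ applied to $\ad s$, using that $\mathcal A$ is a Banach algebra and $\ad s$ is a bounded derivation). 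With $T_p$ invertible, writing $\dot g=xp^{1/2}+p^{1/2}y$ and mimicking the derivation of (\ref{bure}) produces the stated formula. The formula for $d_{GL^+}(p,q)$ as an infimum is the immediate translation of (\ref{diu}), and for sufficiently close $p,q$ the minimizing curves being the quadratic curves $\gamma(t)=\pi_1(p^{1/2}+tv)$ follows from the fact that $GL(\mathcal A)$ is open in $\mathcal A$ so flat segments are short locally (Remark \ref{flat}), projected down via $\pi_1$.

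The main obstacle I expect is purely notational rather than mathematical: making sure every step in Section \ref{pio} and Section \ref{bures} that uses a property of $\mathcal L(H)$ beyond the $C^*$-structure is properly rephrased; in particular the invertibility of $L_p+R_p$ on a general $C^*$-algebra and the use of Putnam--Fuglede inside $\mathcal A$ require citation (or a brief verification) rather than appeal to the original proofs, since those were stated for operators on a Hilbert space.
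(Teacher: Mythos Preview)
Your proposal is correct and matches the paper's own approach exactly: the paper does not give a separate proof of this proposition, but simply prefaces it with the remark that ``the results of the previous sections, for the particular case of the uniform norm in the full group $GL(H)$, can be adapted verbatim to a $C^*$-algebra $\mathcal A$,'' which is precisely what you spell out. One small imprecision: $1/\cosh$ is not entire (it has poles at $i\pi/2+ik\pi$), but since $s=s^*$ forces the spectrum of $\ad s$ on $\mathcal A$ to lie in $\sigma(s)-\sigma(s)\subset\mathbb R$, where $\cosh\ge 1$, the holomorphic functional calculus on a real neighborhood still gives the inverse, so the argument stands.
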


\subsection{Groups of maps}

In this section we discuss some applications to groups of smooth functions, we start by considering groups given by the composition of maps.

\subsubsection{The group of diffeomorphisms of a compact manifold}\label{grudif}

For a compact manifold $M$, consider the group of its $C^{\infty}$ diffeomorphisms $G=\difm$, under the composition of maps. Then $G$ is a Lie group with locally convex Lie algebra $\lie(G)\simeq \mathcal X(M)$, the smooth vector fields on $M$ (for the details on the manifolds structure see \cite[Example II.3.14]{neeblie}).

\smallskip

The exponential map is given by the $1$-time evaluation of the flow of a field $X$: if $\varphi^X(t,p)$ is the flow of $X\in \mathcal X(M)$, then $\exp_G(X)(p)=\varphi^X(1,p)$, shortly $\exp(X)=\varphi^X_1$. This exponential map is smooth and $\exp_{*0}=id$, but in locally convex spaces there is no general inverse function theorem and usually $\exp_G$ is not a local diffeomorphism so it is unfit as a chart. The group $\dm$ is however, a regular Lie group. If $M$ is not compact, $\dm$ is not a  manifold but there is a notion of smooth map $\varphi:N\to \dm$ for smooth manifolds $N$  (see \cite[Definition II.3.1, p. 329]{neeblie}).

\smallskip

For a given (smooth, classical) Finsler metric $F$ on $M$, it is possible to define a tangent norm in $TG$ with the formula
\begin{equation}\label{metrim}
|X|_{\eta}=\int_M |X(p)|_{\eta(p)} d\mu(p),
\end{equation}
 recalling $X\in C^{\infty}(M,TM)$ belongs to $T_{\eta}G$ if and only if $X_p\in T_{\eta(p)}M$ for all $p\in M$. The measure used here can be the Hausdorff/Busemann measure, the Holmes-Thompson metric, or the metric induced by the auxiliary Riemannian metric (see Berck and Paiva \cite{paiva2} and the references therein). If $f,\eta\in \difm$ then $L_f\eta=f\circ\eta$ and $R_f\eta=\eta\circ f$ thus
$$
(L_f)_{*}X=Df X\textrm{ and } (R_f)_{*}X=X\circ f\quad \forall X\in T_{\eta}G.
$$

\begin{rem}
Let $L(f)=\max\{|Df_pv|_p:|v|_p=1, p\in M\}$, note that $L(1)=1$. Since $M$ is compact, the sphere bundle of $M$, $SM=\{(p,v):p\in M, |v|_p=1\}$ is also compact in $TM$. For $\varepsilon>0$, consider the open set in $TM$
$$
TU=\{(q,w):q\in M, |w|_q<1+\varepsilon\},
$$
then $\mathcal F(SM,TU)$ is a neighborhood of the identity map $1_*:TM\to TM$, $1_*(p,v)=(p,v)$, since $\dm$ has the compact open topology. If $f_i\to 1$ in $\dm$, then there exists $i_0$ such that $i\ge i_0$ implies that $(f_i)_*(SM)\subset TU$, that is $|(Df_i)_pv|_{f(p)}<1+\varepsilon$ for all $p\in M$, $|v|_p=1$. Thus $L(f_i)<1+\varepsilon$ for all $i\ge i_0$, and this proves that $L$ is upper-semicontinuous. Hence \textit{these metrics (\ref{metrim}) in $\dm$ are $L$-uniform}. 
\end{rem}

On the other hand if the metric in $M$ is Riemannian then 
$$
f\mapsto R(f)=\max\{|\det(Df^{-1}_p)|: p\in M\}
$$
is continuous and can be used to show that the metric is $R$-uniform using the change of variables formula for the integral. What are the possible choices of $R$ for measures compatible with a Finsler geometry on $M$, such as the Holmes-Thompson metric?  This would provide some control over the group operations (Proposition \ref{contigru}) and the group exponential map (Lemma \ref{explr}).

\smallskip

\begin{rem}
By considering the subgroup $K=\difm_{\mu}$ of diffeomorphisms preserving the measure $\mu$, one obtains that that the metric is right-invariant for the action of $K$, therefore one can give the space of densities $G/K$ the quotient metric of Section \ref{hs}. Extensions of the classical groups of symplectomorphisms, and Hamiltonian symplectomorphisms (see Kriegl and Michor \cite{km}) can also be regarded as (pseudo)-metric groups and homogeneous spaces, in this fashion. The groups of Sobolev diffeomorphisms studied by Ebin and Marsden are only half-lie groups (from the right) but nevertheless the methods described here in Sections \ref{intro1} and \ref{hs} apply. In particular we have the equivalent characterization of quotient metrics (Theorem \ref{elcorodelq}) and the characterization of geodesics (Theorem \ref{arribajo2}).
\end{rem}

\begin{rem}\label{symplec} When $K=Ham(M,\omega)$, the group of Hamiltonian symplectomorphisms of a compact symplectic manifold $M$, it is natural to consider the bi-invariant metric given by Hofer. To this end we recall that for a time dependent Hamiltonian $H_t:M\to \mathbb R$, its norm is computed as $|H_t|=\max H_t-\min H_t$, and if $X_{H_t}$ is the Hamiltonian vector field induced by $H_t$ (i.e. the symplectic gradient of $H_t$ defined by the relation $\omega(X_{H_t},\cdot)=dH_t$), then the flow of $H_t$ is given by the equation $\dot{\phi}_t\circ \phi_t^{-1}=X_{H_t}$. Thus if $\phi_t$ is any path of symplectomorphisms, its length is given by $L(\phi)=\int_0^1 |H_t|dt$. As we mentioned earlier, Theorem \ref{nonstrict} above regarding a characterization of short paths provides another approach to the characterization of geodesics with this metric (related to the quasi-autonomous Hamiltonians, see \cite{lm1}). However, since the hypothesis on the exponential map being a local diffeomorphism fails for groups of symplectomorphisms, our approach seems to need further refinement to fully understand the relation.
\end{rem}

\subsubsection{Gauge groups, Loop groups}

Now we discuss groups of maps $G=C_c^{\infty}(M,K)$ where $M$ is a $\sigma$-compact (finite dimensional) manifold and $K$ is a locally convex Lie group, with the point-wise operations. The lie algebra of $C_c^{\infty}(M,K)$ is identified with $C_c^{\infty}(M,\mathfrak k)$, where $\mathfrak k=\lie(K)$, and if $K$ has a smooth exponential and $v\in C_c^{\infty}(M,\mathfrak k)$ then 
$$
\exp(v)(z)=\exp_K(v(z)),\quad z\in M
$$
is the smooth exponential of $C_c^{\infty}(M,K)$. Moreover, if $K$ is a locally exponential then $C_c^{\infty}(M,K)$ is locally exponential  \cite[Theorem IV.1.12]{neeblie}. If in addition, $K$ is regular, then $C_c^{\infty}(M, K)$ is regular, and if $K$ is $BCH$, then so is $C_c^{\infty}(M,K)$. In particular $C^{\infty}(M,K)$ is a $BCH$-Lie group for any compact (finite dimensional) manifold $M$ and any Banach-Lie group $K$. These considerations can be specialized to loop groups $LK=C^{\infty}(S^1,K)$, or else generalized to gauge groups $Gau_c(P)$ where $q:P\to M$ is a smooth $K$-principal bundle over $M$ (the gauge group is the group of compactly supported gauge transformations of the bundle).

\smallskip

If $|\cdot|:TK\to \mathbb R$ is a Finsler metric in $K$, there are several ways to induce  metrics in $TC_c^{\infty}(M,K)$. Note that if $f\in C_c^{\infty}(M,K)$, and  $w\in T_f C_c^{\infty}(M,K)$, then we can take 
$$
|w|_{f,\infty}=\max_{z\in M} |w(z)|_{f(z)},
$$
or the $p$-norms 
$$
|w|_{f,p}=\left(\int_M |w(z)|^p_{f(z)} d\mu(z)\right)^{\nicefrac{1}{p}},\quad 1\le p<\infty
$$
where $\mu$ is some reasonable measure in $M$ (the volume form of a given Riemannian metric on $M$, or $\omega^n$ for a symplectic form $\omega$ in $M$ when $\dim(M)=2n$, etc.). 

\smallskip

There are also interesting variations of these groups using Sobolev spaces where we can apply those ideas, for example let $K$ be a compact connected Lie groups and let $L^{s_0}K=H^{s_0}(M,K)$ be the Sobolev $H^{s_0}$ maps. This is a Hilbert Lie group when $s_0>\dim(M)/2$, and the Laplacian $\Delta$ of $M$ induces a Sobolev metric in $L^{s_0}K$ (see Freed \cite{freed}).

\begin{rem}\label{lindul}
It is immediate from these constructions that if the metric in $K$ is left (or right) invariant, the induced metric is left (resp. right) invariant.  If the metric in $K$ is $L$-uniform, let
$$
\mathbb L(f)=\max_{z\in M}L(f(z)),
$$
note that $\mathbb L(\mathbbm 1)=1$ (we denote $\mathbbm 1$ to the identity of $C^{\infty}(M,K)$). For $\varepsilon>0$, there exists a neighborhood $U$ of $1\in K$ such that $L(k)-1<\varepsilon$ for $k\in K$. Since $C^{\infty}(M,K)$ has the compact open topology, if $f_i\to \mathbbm 1$ there exists $i_0$ such that $i\ge i_0$ guarantees that $f_i(M)\subset U$, which implies $L(f_i(z))-1<\varepsilon$ for all $i\ge i_0$ and all $z\in M$. Taking the maximum over $z\in M$ implies $\mathbb L(f_i)-1\le\varepsilon$ for all $i\ge i_0$. Thus $\mathbb L:C^{\infty}(M,K)\to\mathbb R_{\ge 0}$ \textit{is also  upper semi-continuous}. If $g,h\in C_c^{\infty}(M,K)$ and $v\in T_hC_c^{\infty}(M,K)$ then 
$$
|(L_g)_{*h}v (z)|_{gh(z)}=|(L_{g(z)})_{*h(z)}v(z)|_{g(z)h(z)}\le L(g(z))|v(z)|_{h(z)}\le \mathbb L(g) |v(z)|_{h(z)}
$$
therefore $|(L_g)_{*h}v|_{gh,\infty}\le  \mathbb L(g) \, |v|_{h,\infty}$. This shows that the maximum norm is $L$-uniform in $C^{\infty}_c(M,K)$. Likewise, since
$$
\int_M |(L_g)_{*h}v(z)|_{gh(z)}^p d\mu(z)\le \mathbb L(g)^p \int_M  |v(z)|_{h(z)}^p d\mu(z)
$$
the $p$-norms are also $L$-uniform. The same considerations hold for a $R$-uniform metric in $K$. Hence the results of Section 1 and 2 apply to all these constructions. This can be used to compare the metric in the full group of free loops $LK=C^{\infty}(S^1,K)$ with the metric in the group $\Omega K=LK/K$ of based loops; there are also interesting actions of $G=LGL_n$ in the space of harmonic maps (see \cite{bg} and the references therein).
\end{rem}

In particular, if $|\cdot|$ is a bi-invariant metric in $K$, then those constructions give bi-invariant metrics in $C^{\infty}_c(M,K)$, and if $K$ is locally exponential the results of Section \ref{biinvme} regarding the local minimality of one-parameter groups also apply. Taking into account the examples of Section \ref{dlh}, there are several possible choices of $K$ to combine these ideas.

\medskip

To end this paper, we illustrate this observations with a particular loop group. Let $K=\mathcal U(H)$, the unitary group of a separable complex Hilbert space $H$, so $\mathfrak k=\mathcal L(H)_{sh}$. Let $M=S^1$ so $G=L \mathcal U=C^{\infty}(S^1,\mathcal U (H))$ is the loop group of unitary operators. Being a Banach-Lie group, $\mathcal U (H)$ is locally exponential; more precisely if $B_{\pi}=\{z=-z^*: \|z\|<\pi\}$ then the operator exponential is a diffeomorphism from $B_{\pi}\subset\lie( \mathcal U (H))$ onto $V_{\pi}=\{u\in \mathcal U (H):\|u-1\|<2\}$ (recall that we use $\|\cdot\|$ to indicate the usual supremum operator norm). So we give $L\mathcal U$ the bi-invariant metric
$$
|v|_g=|g^{-1}v|_{\infty}=\max\{ \|g(z)^{-1}v(z)\|=\|v(z)\|: z\in S^1\},
$$
and we note that if $v\in \tilde{B_{\pi}}=\{v\in\lie(G): \max\{\|v(z)\|:z\in S^1\}<\pi$ then the exponential map $\exp_G|_{\tilde B}\to \tilde{V}=\exp_G(\tilde B)$ is a diffeomorphism. Hence by Theorem \ref{mini}

\begin{coro}Let $\gamma\in L\mathcal U$ with 
$$
|1-\gamma|_{\infty}=\max\{\|1-\gamma(z)\|:z\in S^1\}<2.
$$
Then there exists a unique $v\in \tilde{B_{\pi}}$ such that $\gamma=\exp_G(v)$ (that is $\gamma(z)=e^{v(z)}$ for all $z\in S^1$), and $\delta_t=\exp_G(tv)$ is shorter than any other piecewise smooth path joining its given endpoints $1,g$ in $L\mathcal U$. In particular for the intrinsic metric induced in  the loop group,
$$
\di_{\infty}(1,\gamma)=|v|_{\infty}=\max\{\|v(z)\|:z\in S^1\}.
$$ 
\end{coro}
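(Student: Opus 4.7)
The plan is to reduce the corollary to a direct application of Theorem \ref{mini} by identifying the set $\tilde V = \exp_G(\tilde B_{\pi})$ explicitly inside $L\mathcal U$. First I would observe that, by the compactness of $S^1$,
$$
\tilde V = \{ g \in L\mathcal U : g(z) \in V_{\pi} \text{ for every } z \in S^1\} = \{ g \in L\mathcal U : |1-g|_{\infty} < 2\}.
$$
The first equality is immediate from the pointwise definition of $\exp_G$ and of $\tilde B_\pi$; the second uses that $V_\pi = \{u \in \mathcal U(H) : \|1-u\| < 2\}$ and that, by compactness of $S^1$, the supremum $\max_z\|1-g(z)\|$ is attained, so the pointwise bound is equivalent to the uniform one.

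Given $\gamma \in L\mathcal U$ with $|1-\gamma|_\infty < 2$, the identification above produces for each $z \in S^1$ a unique $v(z) \in B_\pi \subset \lie(\mathcal U(H))$ with $\gamma(z) = e^{v(z)}$, namely $v(z) = \log(\gamma(z))$ via the principal branch of the logarithm in the holomorphic functional calculus. Smoothness of $z \mapsto v(z)$ follows because $\exp : B_\pi \to V_\pi$ is a $C^\infty$ diffeomorphism of open subsets of Banach spaces and $\gamma$ is smooth; thus $v \in \lie(G)$, and $|v|_\infty = \max_z\|v(z)\| < \pi$ so that $v \in \tilde B_\pi$. Uniqueness of $v$ in $\tilde B_\pi$ is inherited from the injectivity of $\exp|_{B_\pi}$ pointwise.

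With these identifications in place, the hypotheses of Definition \ref{localymuexp} hold for $L\mathcal U$ with the bi-invariant Finsler norm $|\cdot|_\infty$ and radius $R = \pi$: the set $\tilde B_\pi$ is the open $\pi$-ball of the norm in $\lie(G)$, and $\exp_G|_{\tilde B_\pi}$ is a diffeomorphism onto the open subset $\tilde V$ of $L\mathcal U$. Theorem \ref{mini} then directly yields that $\delta_t = \exp_G(tv)$ is shorter than any other piecewise $C^1$ path joining $1$ and $\gamma$, and that $\di_\infty(1,\gamma) = |v|_\infty$. The only point requiring any care — though entirely standard — is the smoothness of the pointwise logarithm $v : S^1 \to \lie(\mathcal U(H))$; once this is secured, the remainder is a specialization of Theorem \ref{mini} to the loop group.
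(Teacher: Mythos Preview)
Your proposal is correct and follows the same route as the paper: verify that Definition \ref{localymuexp} is satisfied for $L\mathcal U$ with $R=\pi$, then invoke Theorem \ref{mini}. The paper's own treatment is terser --- it records that $\exp_G|_{\tilde B_\pi}$ is a diffeomorphism (citing the general fact that $C^\infty_c(M,K)$ is locally exponential when $K$ is) and immediately writes ``Hence by Theorem \ref{mini}'' --- whereas you additionally spell out the identification $\tilde V=\{g:|1-g|_\infty<2\}$ and the pointwise construction of $v=\log\gamma$; these are exactly the details implicit in the paper's one-line argument.
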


Variations of this statement for the restricted loop groups of compact operators (Section \ref{restriu}) are straightforward.

\section*{Acknowledgements} This research was supported by CONICET (PIP 2014 11220130100525) and ANPCyT (PICT 2015 1505).

\bigskip

{\sc Gabriel Larotonda.}\\
{Instituto Argentino de Matem\'atica, ``Alberto P. Calder\'on'' (CONICET), and \\  
Departamento de Matem\'atica, Facultad de Cs. Exactas y Naturales, Universidad de Buenos Aires. Ciudad Universitaria (1428) CABA, Argentina }

\noindent e-mail: {\sf glaroton@dm.uba.ar }

\end{document}